% use option [preprint] to remove info line at bottom:

\documentclass[bj]{imsart}

\RequirePackage[OT1]{fontenc}
\usepackage{amsthm,amsmath,amsfonts,amssymb}
\usepackage[numbers]{natbib}
\RequirePackage[colorlinks,citecolor=blue,urlcolor=blue]{hyperref}

% provide arXiv number if available:
%\arxiv{arXiv:0000.0000}

% put your definitions there:
\startlocaldefs
\usepackage[utf8]{inputenc}
\usepackage{microtype}
\usepackage{dsfont}
\usepackage{color}
\usepackage{enumerate}

\newtheorem{theorem}{Theorem}[section]
\newtheorem{proposition}[theorem]{Proposition}
\newtheorem{lemma}[theorem]{Lemma}
\newtheorem{corollary}[theorem]{Corollary}

\theoremstyle{remark}
\newtheorem{remark}[theorem]{Remark}

%%%%%% Temporary commands, to be commented out at the end
%\usepackage{refcheck}
%
%% http://latexcolor.com/
\definecolor{auburn}{rgb}{0.43, 0.21, 0.1}
\definecolor{britishracinggreen}{rgb}{0.0, 0.26, 0.15}
\definecolor{burntumber}{rgb}{0.54, 0.2, 0.14}
\definecolor{carmine}{rgb}{0.59, 0.0, 0.09}
\definecolor{aurometalsaurus}{rgb}{0.43, 0.5, 0.5}
\definecolor{gray}{rgb}{0.4, 0.4, 0.4 }
\newcommand{\js}[1]{\textcolor{burntumber}{\small\sffamily [JS: {#1}]}}

\newcommand{\mdf}[1]{\textcolor{red}{#1}}

\renewcommand{\le}{\leqslant}
\renewcommand{\leq}{\leqslant}
\renewcommand{\ge}{\geqslant}
\renewcommand{\geq}{\geqslant}

\renewcommand{\emptyset}{\varnothing}

\newcommand{\graph}{\operatorname{gph}}
\newcommand{\gph}{\graph}
\newcommand{\glimsup}{\operatornamewithlimits{g-lim\,sup}}
\newcommand{\gliminf}{\operatornamewithlimits{g-lim\,inf}}
\newcommand{\toto}{\rightrightarrows}

\newcommand{\supp}{\operatorname{spt}}
\newcommand{\spt}{\supp}
\newcommand{\epi}{\operatorname{epi}}
\newcommand{\cl}{\operatorname{cl}}

\newcommand{\intr}{\operatorname{int}}
\newcommand{\bnd}{\operatorname{bnd}}
\newcommand{\dom}{\operatorname{dom}}
\newcommand{\proj}{\operatorname{proj}}

\newcommand{\conv}{\operatorname{conv}}
\newcommand{\ball}{\mathds{B}}
\newcommand{\oball}{\ball^\circ}%{\mathring{\ball}}

\newcommand{\prob}{\mathbb{P}}
\newcommand{\Sphere}{\mathbb{S}_{d-1}}%{\Theta}%{\varTheta}
\newcommand{\Fell}{\mathcal{F}}

\newcommand{\reals}{\mathbb{R}}
\newcommand{\NN}{\mathbb{N}}
\newcommand{\id}{\mathrm{Id}}%{I}%{\mathrm{id}}
\newcommand{\1}{\mathds{1}}

\newcommand{\Mz}{\mathcal{M}_{0}}
\newcommand{\Rd}{\reals^{d}}

\newcommand{\eps}{\varepsilon}
\newcommand{\diff}{\mathrm{d}}
\newcommand{\point}{\,\cdot\,}
\newcommand{\diag}{\operatorname{diag}}

\newcommand{\gto}{\raisebox{-0.5pt}{\,\scriptsize$\stackrel{\raisebox{-0.5pt}{\mbox{\tiny $\mathrm{g}$}}}{\longrightarrow}$}\,}
\newcommand{\vto}{\raisebox{-0.5pt}{\,\scriptsize$\stackrel{\raisebox{-0.5pt}{\mbox{\tiny $0$}}}{\longrightarrow}$}\,}
\newcommand{\wto}{\raisebox{-0.5pt}{\,\scriptsize$\stackrel{\raisebox{-0.5pt}{\mbox{\tiny $\mathrm{w}$}}}{\longrightarrow}$}\,}

%{\raisebox{-0.5pt}{\,\scriptsize$\stackrel{\longrightarrow}{\mbox{\tiny $N$}}$\,}}

\newcommand{\inpr}[1]{\langle{#1}\rangle}
\newcommand{\norm}[1]{\left\lvert{#1}\right\rvert}%{\left\lVert{#1}\right\rVert}
\newcommand{\abs}[1]{\left\lvert{#1}\right\rvert}

\newcommand{\bpsi}{\bar{\psi}}
\newcommand{\sphere}{\mathbb{S}}

%{\mathrm{uni}}%{\operatorname{unif}}

\newcommand{\Prob}{\mathcal{P}}

\newcommand{\expec}{\mathbb{E}}

%%%%%%
\endlocaldefs

\begin{document}

\begin{frontmatter}
\title{Tails of optimal transport plans for regularly varying probability measures}
\runtitle{Optimal transport plans for regularly varying probability measures}

\begin{aug}
\author{\fnms{Cees} 
	\snm{de Valk}%
	\thanksref{a}}
\and
\author{\fnms{Johan} 
	\snm{Segers}%
	\thanksref{b}}

\address[a]{Koninklijk Nederlands Meteorologisch Instituut, 
	PO Box 201, 
	NL-3730 AE De Bilt, 
	Netherlands. E-mail: \href{mailto:cees.de.valk@knmi.nl}{\ttfamily\upshape cees.de.valk@knmi.nl}}

\address[b]{UCLouvain, LIDAM/ISBA, 
	Voie du Roman Pays 20, 
	1348 Louvain-la-Neuve, 
	Belgium. E-mail: \href{mailto:johan.segers@uclouvain.be}{\ttfamily\upshape johan.segers@uclouvain.be}}
	
\runauthor{C. de Valk and J. Segers}
	
\affiliation{Koninklijk Nederlands Meteorologisch Instituut and UCLouvain}
\end{aug}

\begin{abstract}
For the basic case of $L_2$ optimal transport between two probability measures on a Euclidean space, the regularity of the coupling measure and the transport map in the tail regions of these measures is studied. For this purpose, Robert McCann's classical existence and uniqueness results are extended to a class of possibly infinite measures, finite outside neighbourhoods of the origin. For convergent sequences of pairs of such measures, the stability of the multivalued transport maps is considered, and a useful notion of locally uniform convergence of these maps is verified under light assumptions. Applied to regularly varying probability measures, these general results imply the existence of tail limits of the transport plan and the coupling measure, these objects exhibiting distinct types of homogeneity.
\end{abstract}

\begin{keyword}
	\kwd{Optimal transport}
	\kwd{Regular variation}
	\kwd{Cyclic monotonicity}
	\kwd{Graphical convergence}
	\kwd{Maximal monotone map}
\end{keyword}

\end{frontmatter}

% ====================
\section{Introduction}

The tail regions of a probability distribution are particularly important in many applications: they may be associated with serious consequences like damage, casualties, and financial loss, or in other cases, with exceptional benefits. 
Approximations or estimates of the tail of a probability measure can be improved if a suitable notion of regularity applies to the tail. The formulation and application of tail regularity conditions is the subject of extreme value theory. 

In this paper, we address the optimal transport between probability measures $\mu$ and $\nu$ on $\Rd$, and we seek to determine to what extent an optimal transport map and an associated optimal coupling measure between $\mu$ and $\nu$ inherit the tail regularity of these measures. The motivation to study this problem stems partly from potential applications of optimal transport such as calibration (with an optimal transport plan as generalisation of the matching of quantiles as in e.g. \cite{Squintu}) 
and optimal transport-based multivariate quantiles and related concepts recently introduced in \citep{Chernozhukov, dBCAHM18}.

We focus on a common tail regularity assumption known as multivariate regular variation \citep{Laurens__boek, Resnick_book, Resnick-heavy}: if $Y$ is a random vector with probability measure $\nu$, then the tail of its radius $\norm{Y}$ can be approximated by a power law, and given that $\norm{Y}$ is large, the direction $Y / \norm{Y}$ is almost independent of the radius $\norm{Y}$.

%common in the asymptotic theory of affinely normalized sample maxima and sums to max-stable and sum-stable distributions, respectively. 
%Because regularly varying distributions may not have finite second moments, the moment-free set-up in \citet{McCann} is ideal for our purpose.
%common in the asymptotic theory of affinely normalized sample maxima and sums to max-stable and sum-stable distributions, respectively. 
%Because regularly varying distributions may not have finite second moments, the moment-free set-up in \citet{McCann} is ideal for our purpose.
As regular variation involves the convergence of certain finite intensity measures built from  $\mu$ and $\nu$ to intensity measures  $\bar{\mu}$ and $\bar{\nu}$, one might suspect that a basic stability result along the lines of Theorem~5.20 in \citet{Villani} will also apply to optimal transport maps between the intensity measures.

However, we run into a major obstacle: the limiting measures $\bar{\mu}$ and $\bar{\nu}$ are not even finite. Convergence takes place in the space $\Mz(\Rd)$ of Borel measures which are finite on complements of neighbourhoods of the origin \citep{HultLindskog2006}. As this situation is not covered in the literature, we first address basic questions of existence, uniqueness, representation and stability of optimal couplings between possibly infinite measures in the space $\Mz(\Rd)$.

In order to limit the complexity and length of this paper, we consider the case of a cost function which is quadratic in the Euclidean norm of the displacement. For the case of probability measures, \citet{McCann} in particular treated optimal transport with quadratic cost in full generality. Moment assumptions are avoided, and the transportation cost may be infinite. This is the generalized optimal transport problem addressed for a wider class of cost functions in \citep[Theorem~10.42]{Villani}. In Sections~\ref{sec:unique} and~\ref{sec:stability}, we extend this treatment to the possibly infinite measures in the space $\Mz(\Rd)$. 
%For two such measures, we show existence of a coupling measure with cyclically monotone support, and if one of the measures vanishes on Borel sets of Hausdorff dimension $d-1$, then the coupling is induced by an essentially unique map taking the form of the gradient of a convex function. 
In particular for the uniqueness issue, this extension is not trivial.

The space $\Mz(\Rd)$ comes equipped with a topology accommodating convergence of possibly infinite measures. For sequences of pairs of measures converging in $\Mz(\Rd)$, we show in Section~\ref{sec:stability} the stability of the sequence of coupling measures with cyclically monotone supports and of the maps that support those coupling measures. Furthermore, we demonstrate a form of locally uniform convergence of (multi-valued) transportation maps under mild assumptions.
Weak convergence of probability measures being equivalent to their convergence in $\Mz(\Rd)$, our stability result complements those in, for example, \citet[Section~3]{CAMT97}, \citet[Chapter~5]{Villani}, and \citet[Section~2]{dBL:2019}.

In Section~\ref{sec:RV}, we apply these basic results to the case of regularly varying probability measures. We show that the sequences of suitably rescaled optimal coupling measures and associated rescaled transportation maps admit converging subsequences, and, under mild additional assumptions, that the sequences themselves converge. Moreover, the limit objects satisfy distinct types of homogeneity.

\section{Preliminaries}
\label{sec:prelimin}

We will be needing concepts and results from variational analysis and measure transportation. This section serves to recall some basic facts and to fix some notation. The interior, closure and boundary of a subset $A$ of a topological space are denoted by $\intr (A)$, $\cl(A)$ and $\bnd(A)$, respectively. Let $\conv(A)$ denote the convex hull of a subset $A$ of a real vector space. The indicator function of $A$ is $\1_A(x) = 1$ if $x \in A$ and $\1_A(x) = 0$ if $x \not\in A$; sometimes we also write $\1(x \in A)$. The identity function on a space clear from the context is denoted by $\id$. The scalar product and the Euclidean norm on Euclidean space are denoted by $\inpr{\point, \point}$ and $\norm{\point}$, respectively. 
The unit sphere in $\Rd$ is denoted by $\Sphere = \{x \in \Rd : \norm{x} = 1 \}$, while the open and closed unit balls in Euclidean space are $\oball = \{v : \norm{v} < 1 \}$ and $\ball = \{v : \norm{v} \le 1\}$, respectively, where the dimension will be clear from the context. For a point $x$ and a scalar $r > 0$, the sets $\oball(x, r) = x + r \oball = \{ x + rv : \norm{v} < 1 \}$ and $\ball(x, r) = x + r \ball = \{ x + rv : \norm{v} \le 1 \}$ are the open and closed balls with centre $x$ and radius $r$.
More generally, for subsets $A$ and $B$ of some Euclidean space, we put $A + B = \{a + b : a \in A, \, b \in B \}$. 
The Hausdorff distance between two non-empty bounded subsets $K$ and $L$ of Euclidean space is
\begin{equation}
\label{eq:hausdorff}
	d_H(K, L) = \inf \{ \eps \ge 0 : 
		K \subset L + \eps \ball \text{ and } L \subset K + \eps \ball 
	\}.
\end{equation}
It is a metric when restricted to the collection of non-empty compact subsets.
The set of all Borel probability measures on $\reals^k$ is denoted by $\Prob(\reals^k)$.

% ------------------------------------------------------
\subsection{Convex functions and their subdifferentials}
\label{sec:prelimin:convex}

We consider convex functions $\psi : \Rd \to \reals \cup \{+\infty\}$ whose domain, $\dom \psi = \{ x \in \Rd : \psi(x) < +\infty \}$, is not empty; such convex functions are called proper \citep[Section~4]{Rockafellar}, a property which we will assume throughout. A convex function $\psi$ is said to be closed if its epigraph, $\epi \psi = \{ (x, \lambda) \in \Rd \times \reals : \psi(x) \le \lambda \}$, is closed, or equivalently, if $\psi$ is lower semicontinuous \citep[Section~7]{Rockafellar}. A convex function can be closed by taking its lower semicontinuous minorant. This operation amounts to closing the function's epigraph in $\Rd \times \reals$ and only affects its values on the boundary of its domain.

The subdifferential of a convex function $\psi$ at a point $x \in \Rd$ is the set $\partial\psi(x)$ of all points $y \in \Rd$ such that
\[
	\forall z \in \Rd, \qquad \psi(z) \ge \psi(x) + \inpr{y, z-x}.
\]
The domain of $\partial\psi$, notation $\dom \partial\psi$, is the set of all $x$ at which $\partial\psi(x)$ is not empty.
% CV not used
% ; at such points, we say that $\psi$ is subdifferentiable. 
The function $\psi$ is differentiable at $x$ if and only $\partial\psi(x)$ is a singleton, and then $\partial\psi(x) = \{ \nabla\psi(x) \}$. This happens at all $x$ in the interior of $\dom(\psi)$ minus a set of Hausdorff dimension at most $d-1$ \citep{AndersonKlee} and thus of zero Lebesgue measure \citep[Theorem~25.5]{Rockafellar}. The subdifferential $\partial\psi(x)$ is empty if $\psi(x) = +\infty$ and nonempty if $x \in \intr(\dom \psi)$. 
Combining \citet[Theorem~6.3, Theorem~23.4, and Corollary~25.1.1]{Rockafellar}, we have in fact
% connections between the domains of $\psi$, $\partial \psi$ and $\nabla \psi$ are summarized in Lemma~\ref{lem:domains} below.
\begin{equation}
\label{eq:domains}
	\left.
	\begin{array}{rcrcr}
		\dom \nabla \psi &\subset& \intr(\dom \partial \psi) &=& \intr(\dom \psi), \\
		&& \dom \partial \psi\phantom{)} &\subset& \dom \psi\phantom{)}, \\
		&& \cl(\dom \partial \psi) &=& \cl(\dom \psi).
	\end{array}	
	\right\}
\end{equation}

%\begin{lemma}[Domains]
%	\label{lem:domains}
%	Let $\psi : \Rd \to \reals \cup \{+\infty\}$ be a (proper) convex function. Then
%	\[
%	\begin{array}{rcrcr}
%	\dom \nabla \psi &\subset& \intr(\dom \partial \psi) &=& \intr(\dom \psi), \\
%	&& \dom \partial \psi\phantom{)} &\subset& \dom \psi\phantom{)}, \\
%	&& \cl(\dom \partial \psi) &=& \cl(\dom \psi).
%	\end{array}
%	\]
%\end{lemma}
%
%\begin{proof}
%All statements can be traced back to \citet[Theorem~6.3, Theorem~23.4, and Corollary~25.1.1]{Rockafellar}.
%\end{proof}

We view $\partial\psi$ as a multivalued map (or map in short), that is, a map from $\Rd$ into the power set of $\Rd$; notation $\partial\psi : \Rd \rightrightarrows \Rd$. The graph of a map $S : \mathcal{X} \rightrightarrows \mathcal{Y}$ is
\[ 
	\gph(S) = \{(x, y) \in \mathcal{X} \times \mathcal{Y} : y \in S(x) \}. 
\]
Note that the graph of $S$ is a subset of $\mathcal{X} \times \mathcal{Y}$ rather than of the Cartesian product of $\mathcal{X}$ and the power set of $\mathcal{Y}$. The domain of $S$ is $\dom S = \{ x \in \mathcal{X} : S(x) \neq \varnothing \}$. For two maps $S, T : \mathcal{X} \rightrightarrows \mathcal{Y}$, we say that $S \subset T$ if $S(x) \subset T(x)$ for all $x \in \mathcal{X}$, or equivalently, if $\gph(S) \subset \gph(T)$.

The derivative of a convex function on a real interval is non-decreasing, and non-decreasing functions constitute the cheapest way for moving masses on the real line with respect to squared distance as cost function. On Euclidean space, subdifferentials of convex functions play a similar role.
% CV
%A map $S : \Rd \rightrightarrows \Rd$ is called monotone if, for all pairs $(x_1, y_1)$ and $(x_2, y_2)$ such that $y_1 \in S(x_1)$ and $y_2 \in S(x_2)$, we have
%\begin{equation}
%\label{eq:monotone}
%%	\forall y_1 \in S(x_1), y_2 \in S(x_2), \qquad 
%	\inpr{x_2 - x_1, y_2 - y_1} \ge 0.
%\end{equation}
%Inequality~\eqref{eq:monotone} is equivalent to 
%\begin{equation}
%\label{eq:monotone:d}
%	\norm{x_1 - y_1}^2 + \norm{x_2 - y_2}^2 \le \norm{x_1 - y_2}^2 + \norm{x_2 - y_1}^2, 
%\end{equation}
%stating that transporting a unit mass from $x_1$ to $y_1$ and another one from $x_2$ to $y_2$ is cheaper than when the destinations are switched. Monotone maps enjoy many boundedness and continuity properties reviewed in \citet{A-A} and in \citep[Chapter~12]{Rockafellar-Wets},
%\bgroup
%some of which we will use later on.
%
%% CV typo corrrected: ever -> even
%Subdifferentials of convex functions enjoy an even stronger property.
%\egroup
A multivalued map $S : \Rd \rightrightarrows \Rd$ is called cyclically monotone if, for every finite collection of pairs $(x_1, y_1)$, $\ldots$, $(x_n, y_n)$ such that $y_i \in S(x_i)$ for all $i = 1, \ldots, n$, we have, writing $y_{n+1} = y_1$,
% CV
%\begin{equation}
%\label{eq:cm}
%	\sum_{i=1}^n \inpr{x_i, y_i} \ge \sum_{i=1}^n \inpr{x_i, y_{i+1}}.
%\end{equation}
%Inequality~\eqref{eq:cm} extends inequality~\eqref{eq:monotone} from two to $n$ pairs $(x_i, y_i)$, and it implies that, rather than \eqref{eq:monotone:d}, we have
\begin{equation}
\label{eq:cm:d}
	\sum_{i=1}^n \norm{x_i - y_i}^2 \leq \sum_{i=1}^n \norm{x_i - y_{i+1}}^2.
\end{equation}
Eq.~\eqref{eq:cm:d} says that moving a unit point mass from location $x_i$ to location $y_i$ for all $i = 1, \ldots, n$ minimizes the total quadratic cost among all plans moving unit point masses from locations $x_1, \ldots, x_n$ to locations $y_1, \ldots, y_n$ (since any permutation of $\{1, \ldots, n\}$ can be decomposed into cycles on disjoint subsets). 
% CV
% A cyclically monotone map is clearly monotone, but in dimension $d \ge 2$, the converse is not true. 
% Clearly, \eqref{eq:cm:d} can be stated as 
%\begin{equation}
%\label{eq:cm}
%	\sum_{i=1}^n \inpr{x_i, y_i} \ge \sum_{i=1}^n \inpr{x_i, y_{i+1}}.
%\end{equation}
% CV
% Inequality~\eqref{eq:cm} extends inequality~\eqref{eq:monotone} from two to $n$ pairs $(x_i, y_i)$, and it implies that, rather than \eqref{eq:monotone:d}, we have

A map $S : \Rd \rightrightarrows \Rd$ is called monotone if \eqref{eq:cm:d} holds for $n= 2$. 
A map $S : \Rd \rightrightarrows \Rd$ is called maximal (cyclically) monotone if it is (cyclically) monotone and if it is not strictly contained in another (cyclically) monotone map. The graphs of maximal (cyclically) monotone maps are necessarily closed. The relevance of subdifferentials of convex functions for optimal measure transport comes from the following characterization \citetext{\citealp[Theorem~24.8 and~24.9]{Rockafellar}; \citealp[Theorem~12.25]{Rockafellar-Wets}}.

\begin{theorem}[Rockafellar's Theorem]
	\label{thm:Rock}
	A map $S : \Rd \rightrightarrows \Rd$ is cyclically monotone if and only if it is contained in the subdifferential of a closed convex function. It is maximal cyclically monotone if it is equal to the subdifferential of a closed convex function.
\end{theorem}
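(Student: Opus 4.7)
My approach is to handle the two directions of the first statement separately and then use the converse construction to establish the maximality claim.

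For the easy direction, suppose $S \subset \partial \psi$ with $\psi$ closed proper convex. For any chain of pairs $(x_1, y_1), \ldots, (x_n, y_n) \in \gph(S)$ with cyclic convention $x_{n+1} = x_1$, the subdifferential inequality at $x_i$ with test point $x_{i+1}$ gives $\psi(x_{i+1}) - \psi(x_i) \ge \inpr{y_i, x_{i+1} - x_i}$; summing telescopes the left side to zero, producing the linear cyclic inequality $\sum_i \inpr{y_i, x_{i+1} - x_i} \le 0$. A direct algebraic manipulation---expanding $\norm{x_i - y_{i+1}}^2 - \norm{x_i - y_i}^2$ and cancelling the $\norm{y_j}^2$ terms around the cycle---shows that this linear inequality, applied also to the reversed cycle $(x_n, y_n), \ldots, (x_1, y_1)$, is equivalent to the quadratic inequality \eqref{eq:cm:d}, so $S$ is cyclically monotone.

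For the converse, assume $S$ is cyclically monotone. We may take $\gph(S) \neq \varnothing$, else any closed proper convex function trivially works. Fixing a base pair $(x_0, y_0) \in \gph(S)$, I define Rockafellar's potential
\[
\psi(x) = \sup \Bigl\{ \inpr{y_n, x - x_n} + \sum_{i=0}^{n-1} \inpr{y_i, x_{i+1} - x_i} : n \ge 0, \, (x_1, y_1), \ldots, (x_n, y_n) \in \gph(S) \Bigr\},
\]
a pointwise supremum of affine functions, hence convex and closed. Evaluating at $x_0$, the trivial chain ($n=0$) gives $\psi(x_0) \ge 0$, while any nontrivial chain's expression at $x_0$ is precisely the cyclic linear sum $\sum_{i=0}^n \inpr{y_i, x_{i+1} - x_i}$ around $(x_0, y_0), (x_1, y_1), \ldots, (x_n, y_n)$, which is $\le 0$ by cyclic monotonicity (in its linear form, available by the equivalence just noted). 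Hence $\psi(x_0) = 0$ and $\psi$ is proper. I expect this properness step---the only point where cyclic monotonicity is essentially used rather than mere monotonicity---to be the subtlest step of the argument. To verify $S \subset \partial \psi$, given $(x, y) \in \gph(S)$ and $z \in \Rd$, I append $(x, y)$ to an $\eps$-optimal chain for $\psi(x)$ and use the enlarged chain as a test chain at $z$; this gives $\psi(z) \ge \psi(x) + \inpr{y, z - x} - \eps$, and letting $\eps \to 0$ yields $y \in \partial \psi(x)$.

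For maximality, suppose $\psi$ is closed proper convex and $S \supsetneq \partial \psi$ is cyclically monotone; I seek a contradiction. Applied to $S$ at a base pair $(x_0, y_0) \in \gph(\partial \psi)$, the converse construction produces a closed proper convex $\tilde\psi$ with $S \subset \partial \tilde\psi$; I normalize by an additive constant so $\tilde\psi(x_0) = \psi(x_0)$. Since $\partial \psi \subset \partial \tilde\psi$ is nonempty on $\intr(\dom \psi)$, both $\psi$ and $\tilde\psi$ are finite and locally Lipschitz there, hence differentiable almost everywhere, and at any common differentiability point $\nabla \psi(x) \in \partial \tilde\psi(x) = \{\nabla \tilde\psi(x)\}$, forcing $\nabla \psi = \nabla \tilde\psi$ almost everywhere on the connected set $\intr(\dom \psi)$; together with the normalization this yields $\psi = \tilde\psi$ throughout $\intr(\dom \psi)$. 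Continuity of convex functions along open segments originating from $x_0$, combined with lower semicontinuity and \eqref{eq:domains}, extends the equality to all of $\Rd$, so $\partial \psi = \partial \tilde\psi \supset S$, contradicting $S \supsetneq \partial \psi$. The secondary technical point here is this extension from $\intr(\dom \psi)$ to $\bnd(\dom \psi)$ and beyond, where closedness of both functions is crucial.
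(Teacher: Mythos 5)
The paper does not prove this theorem; it cites it as classical \citep[Theorems 24.8 and 24.9]{Rockafellar} and \citep[Theorem 12.25]{Rockafellar-Wets}, so there is no in-paper proof to compare against. Your argument follows the standard Rockafellar route, and the first two directions are sound: the subdifferential inequality yields the linear cyclic inequality, and your observation that the linear and quadratic cyclic inequalities in \eqref{eq:cm:d} are equivalent \emph{as conditions ranging over all finite chains} (via reversed cycles and cancellation of $\norm{y_j}^2$ around the cycle) is correct; the potential-function construction for the converse, including the properness step $\psi(x_0)=0$ and the append-a-chain argument for $S\subset\partial\psi$, is exactly Rockafellar's.

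The maximality step has a genuine gap. Your a.e.-gradient argument plus lower semicontinuity yields $\psi=\tilde\psi$ on $\cl(\dom\psi)$, but you still need to rule out the possibility that $\tilde\psi$ is finite outside $\cl(\dom\psi)$; without that, $\partial\tilde\psi$ may be strictly larger than $\partial\psi$ at points of $\dom\tilde\psi\setminus\cl(\dom\psi)$ and your chain of inclusions $\partial\psi\subset S\subset\partial\tilde\psi=\partial\psi$ does not close. Neither lower semicontinuity nor \eqref{eq:domains} forces $\dom\tilde\psi\subset\cl(\dom\psi)$; what is needed is an argument that the one-sided directional derivative of $\psi$ along any half-line exiting $\cl(\dom\psi)$ blows up, and that this property is inherited by $\tilde\psi$ through the agreement on $\ri(\dom\psi)$ and convexity (this is precisely the delicate part of the classical lemma that $\partial f\subset\partial g$ forces $g=f+\mathrm{const}$). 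A related omission: you tacitly assume $\intr(\dom\psi)\neq\emptyset$; when $\dom\psi$ is lower-dimensional, the argument must be run in the relative interior, replacing gradients by gradients relative to the affine hull. An alternative that avoids the whole issue is to invoke that $\partial\psi$ is maximal \emph{monotone} \citep[Corollary 31.5.2]{Rockafellar}, which immediately gives maximal cyclic monotonicity; but the paper explicitly flags that maximal monotonicity is a deeper and logically independent result, so using it here would be a non-elementary shortcut rather than a self-contained proof.
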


In addition, subdifferentials of closed convex functions are maximal monotone, see \citep[Corollary~31.5.2]{Rockafellar} or \citep[Theorem~12.17]{Rockafellar-Wets}. 
This property does not follow immediately from the fact that they are maximal cyclically monotone and is greatly helpful when studying graphical convergence of sequences of subdifferentials (Appendix~\ref{sec:gconv}).

A subset $T$ of $\Rd \times \Rd$ can be identified with the multivalued map sending $x \in \Rd$ to $\{ y \in \Rd : (x, y) \in T \}$; clearly, $T$ is the graph of this map. Such a set $T$ will be said to enjoy one of the monotonicity properties above if the associated map possesses the property.

To study the asymptotic properties of a sequence of subdifferentials of closed convex functions, we will rely on the concept of graphical convergence. A sequence of maps $S_n : \reals^k \toto \reals^\ell$ converges graphically to a map $S : \reals^k \toto \reals^\ell$ if the graphs of $S_n$ converge to the graph of $S$ as subsets of $\reals^k \times \reals^\ell$ in the sense of Painlev\'e--Kuratowski. The graphical limit of a sequence of subdifferentials of closed convex functions is again the subdifferential of a closed convex function, and the convergence takes place locally uniformly at points in the domain of the gradient of the limit function. Graphical convergence and relevant properties thereof are reviewed in Appendix~\ref{sec:gconv}.

Cyclically monotone maps provide solutions to optimal assignment problems via Eq.~\eqref{eq:cm:d}. They also play a key role in the theory of optimal transport, as explained next.

% ---------------------------------
\subsection{Measure transportation with quadratic distance as cost}
\label{sec:OT}

For two Borel measures $\mu$ and $\nu$ on $\reals^{d}$, not necessarily finite, a coupling measure is a Borel measure $\pi$ on $\reals^{d}\times\reals^{d}$ satisfying $\pi(A\times\reals^{d})=\mu(A)$ and $\pi(\reals^{d}\times A)=\nu(A)$ for every Borel set $A\subset\reals^{d}$. The collection of coupling measures of $\mu$ and $\nu$ is denoted by $\varPi(\mu,\nu)$. We call $\mu$ and $\nu$ the (left and right) marginals of $\pi$.

Suppose $\mu$ and $\nu$ are finite Borel measures on $\reals^d$ with equal, non-zero mass. An optimal transport plan with respect to the squared distance as cost function is a coupling measure $\pi\in\varPi(\mu,\nu)$ solving the Kantorovich problem
\begin{equation}
\label{eq:OptTrans}
	\int_{\Rd \times \Rd} \norm{x - y}^2 \, \diff\pi(x,y)
	=
	\inf_{\pi'\in\varPi(\mu,\nu)} \int_{\Rd \times \Rd} \norm{x - y}^2 \, \diff\pi'(x,y).
\end{equation}
An optimal transport plan exists if $\mu$ and $\nu$ have finite second-order moments, i.e., $\int_{\Rd} \norm{x}^{2} \diff\mu(x) < \infty$ and $\int_{\Rd} \norm{y}^{2} \diff\nu(y) < \infty$ \citep[Theorem~2.12]{Villani-Topics}. The infimum in~\eqref{eq:OptTrans} is the square of the quadratic Wasserstein distance between $\mu$ and $\nu$. Theory for more general cost functions and probability measures defined on broader classes of spaces is extensively covered in \citet{Villani}.

The condition that $\mu$ and $\nu$ have finite second-order moments can be relaxed if we allow the optimal transport cost in \eqref{eq:OptTrans} to be infinite. In this case, all transport plans have ``optimal'' cost, but a meaningful generalisation of an optimal transport plan can still be defined as a coupling measure $\pi$ with cyclically monotone support; see, e.g., \citep[Section~2.3]{Villani-Topics}. Recall that the support $\spt(\mu)$ of a Borel measure $\mu$ on a metric space is the collection of points in the space with the property that every neighbourhood of the point receives positive mass. The support is necessarily a closed set, and it is 
% CV
% in fact 
the smallest closest subset of the space of which the complement is a null set by the given measure. 
% CV
% Recall that $\Prob(\reals^k)$ denotes the set of Borel probability measures on $\reals^k$.
%\begin{theorem}[\citet{McCann}, Theorem~6]
%	\label{thm:existence}
%	For all $\mu, \nu \in \Prob(\Rd)$, there exists $\pi \in \Prob(\Rd \times \Rd)$ having $\mu$ and $\nu$ as its marginals and with cyclically monotone support.
%\end{theorem}
\citet{McCann}, Theorem~6] proved that a coupling measure $\pi \in \varPi(\mu,\nu)$ with cyclically monotone support always exists, and by Rockafellar's Theorem~\ref{thm:Rock}, its support, $\supp \pi$, must be a subset of the graph of the subdifferential $\partial \psi$ of a closed convex function $\psi$.

A variant of the Kantorovich problem is the Monge problem, in which the coupling measure $\pi$ is required to be generated by a Borel-measurable map $T$: 
\begin{equation}
\label{eq:Monge}
\pi(B)= \mu(\{x: (x,T(x)) \in B\})
\end{equation}
for Borel $B \subset \Rd \times \Rd$ and therefore, $\nu(A)= \mu(\{x: T(x) \in A\}) = \mu(T^{-1}(A))$ for Borel $A \subset \Rd$. The latter is usually written as $\nu= T_\# \mu$: $\nu$ is the push-forward of $\mu$ by $T$. Similarly, we write \eqref{eq:Monge} as $\pi= (\id \times T)_\# \mu$.
%if $X$ and $Y$ are random vectors with distributions $\mu$ and $\nu$, respectively, then we seek a $\mu$-almost everywhere defined Borel measurable map $T$ that minimizes $\expec[\norm{X - T(X)}^2]$ under the constraint that $T(X)$ is equal in distribution to $Y$: the distribution of $T(X)$ is the push-forward $T_\# \mu$ of $\mu$ by $T$, defined by 
%\[
%	T_\# \mu(A) = \mu\left( T^{-1}(A) \right) = \mu(\{x \in \Rd : T(x) \in A\}) 
%\]
%for all Borel sets $A \subset \Rd$. The joint distribution $\pi$ of $(X, T(X)) = (\id \times T)(X)$ is $(\id \times T)_\# \mu$, with $\id$ the identity map.
Its support, $\supp \pi$, is a subset of the closure in $\Rd \times \Rd$ of the graph of $T$. 
%It is a coupling of $\mu$ and $\nu$ provided $T_\# \mu = \nu$.

%Again, relaxing the condition that $\mu$ and $\nu$ have finite second-order moments, we can seek a cyclically monotone function $T$ such that $T_\# \mu = \nu$. If $\mu$ vanishes on certain small sets, then any measure $\pi$ in Theorem~\ref{thm:existence} turns out to be exactly of the form $(\id \times T)_\# \mu$, with $T$ the gradient of a convex function. Recall Rockafellar's Theorem~\ref{thm:Rock}, by which the cyclically monotone support of $\pi$ in Theorem~\ref{thm:existence} must be a subset of the graph of the subdifferential of a closed convex function.

%CV
%\begin{proposition}[\citet{McCann}, Proposition~10]
%	\label{prop:description}
%	Suppose $\pi \in \Prob(\Rd \times \Rd)$ is supported on the graph of the subdifferential $\partial \psi$ of a convex function $\psi$. Let $\mu$ and $\nu$ denote the marginals of $\pi$. If $\mu$ vanishes on (Borel) sets of Hausdorff dimension $d-1$, then $\pi = (\id \times \nabla \psi)_\# \mu$ and thus $\nu = \nabla\psi_\# \mu$.
%\end{proposition}

Proposition~10 in \citep{McCann} and Theorem~\ref{thm:Rock} above 
imply that if $\mu$ vanishes on (Borel) sets of Hausdorff dimension $d-1$, then any coupling measure $\pi \in \Pi(\mu,\nu)$ with cyclically monotone support satisfies  $\pi = (\id \times \nabla \psi)_\# \mu$ and thus $\nu = \nabla\psi_\# \mu$ for some closed convex function $\psi$ satisfying $\supp \pi \subset \gph \partial \psi$ (and in fact, for all such $\psi$).
%Not only do Theorem~\ref{thm:existence} and Proposition~\ref{prop:description} guarantee the existence of a convex function $\psi$ such that $\nabla\psi$ pushes $\mu$ forward to $\nu$, the map $\nabla\psi$ is actually uniquely defined up to $\mu$-null sets.
Furthermore, the same condition on $\mu$ implies that the map $\nabla\psi$ is  uniquely determined up to $\mu$-null sets:
%If it exists, then the support of $\pi = (\id \times T)_\# \mu$ is cyclically monotone. A sufficient condition for existence of such a deterministic plan is that $\mu$ vanishes on all sets of Hausdorff dimension at most $d-1$ \citep[Proposition~10]{McCann}: in this case, $T=\nabla\psi$ $\mu$-a.e., with $\psi$ a convex function which $\mu$-almost everywhere differentiable.
\begin{theorem}[\citet{McCann}, Main Theorem]
	\label{thm:McCann}
	Let $\mu, \nu \in \Prob(\Rd)$ and suppose $\mu$ vanishes on (Borel) subsets of $\Rd$ having Hausdorff dimension $d-1$. Then there exists a convex function $\psi$ on $\Rd$ whose gradient $\nabla \psi$ pushes $\mu$ forward to $\nu$. Although $\psi$ is not unique, the map $\nabla \psi$ is uniquely determined $\mu$-almost everywhere.
\end{theorem}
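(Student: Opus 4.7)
The proof splits naturally into existence and uniqueness.

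\textbf{Existence.} The existence of $\psi$ with $\nabla\psi_\#\mu = \nu$ assembles almost immediately from the machinery already in place. I would invoke Theorem~6 of \citet{McCann} to obtain some $\pi \in \varPi(\mu,\nu)$ with cyclically monotone support, apply Rockafellar's Theorem~\ref{thm:Rock} to produce a closed convex $\psi$ with $\supp\pi \subset \gph\partial\psi$, and exploit the fact recalled in Section~\ref{sec:prelimin:convex} that the non-differentiability set of a convex function has Hausdorff dimension at most $d-1$. Under the hypothesis on $\mu$, this forces $\partial\psi(x) = \{\nabla\psi(x)\}$ for $\mu$-a.e.\ $x$. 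Disintegrating $\pi$ along its first marginal $\mu$, the conditional kernel $\pi_x$ is a probability measure concentrated on the singleton $\partial\psi(x) = \{\nabla\psi(x)\}$ and hence equals $\delta_{\nabla\psi(x)}$; consequently $\pi = (\id \times \nabla\psi)_\#\mu$ and $\nu = \nabla\psi_\#\mu$.

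\textbf{Uniqueness.} Let $T_i = \nabla\psi_i$ ($i = 1, 2$) be two such gradients, each pushing $\mu$ forward to $\nu$. My approach is via Fenchel--Young duality. At $\mu$-a.e.\ $x$, differentiability of $\psi_i$ gives the Fenchel--Young equality $\psi_i(x) + \psi_i^*(T_i(x)) = \inpr{x, T_i(x)}$, while for $j \ne i$ the general Fenchel--Young inequality gives $\psi_j(x) + \psi_j^*(T_i(x)) \ge \inpr{x, T_i(x)}$, with equality if and only if $T_i(x) \in \partial\psi_j(x)$. Subtracting each equality from the corresponding cross-inequality produces, at $\mu$-a.e.\ $x$, two non-negative quantities whose sum telescopes to $g(T_1(x)) - g(T_2(x))$ with $g := \psi_2^* - \psi_1^*$. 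Integrating against $\mu$ and using $T_{i\#}\mu = \nu$ on each term reduces this to $\int g \,\diff\nu - \int g \,\diff\nu = 0$; since the integrand is non-negative $\mu$-a.e., it must vanish $\mu$-a.e., forcing each of the two cross-inequalities to be an equality $\mu$-a.e. This gives $T_1(x) \in \partial\psi_2(x) = \{T_2(x)\}$ at $\mu$-a.e.\ $x$, and symmetrically.

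\textbf{Main obstacle.} The delicate point is making the integration rigorous: without moment assumptions, the integrals $\int \psi_i^*\,\diff\nu$ may be infinite, so the formal cancellation above is \emph{a priori} ill-defined. I would circumvent this either by a truncation/regularization procedure (e.g.\ replacing $\psi_i$ by Moreau--Yosida regularizations, which remain convex, whose gradients approximate $\nabla\psi_i$, and for which the conjugates become locally bounded) or by a purely pointwise cyclic monotonicity argument that invokes maximality of $\partial\psi_i$ as a monotone map (noted after Theorem~\ref{thm:Rock}) together with the Hausdorff dimension hypothesis on $\mu$. The former route demands care in passing to the limit; the latter avoids integration but requires a subtle set-theoretic argument on the union $\gph\partial\psi_1 \cup \gph\partial\psi_2$, showing that any discrepancy between $T_1$ and $T_2$ would violate cyclic monotonicity on this union.
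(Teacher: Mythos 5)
The paper does not actually prove Theorem~\ref{thm:McCann}: it is stated and cited as McCann's Main Theorem. What the paper does do is extend the uniqueness argument to measures in $\Mz(\Rd)$ in Theorem~\ref{thm:unique}, whose proof reduces, in its final step, to ``the reasoning of \citet[pp.~318--319]{McCann}'' and Aleksandrov's lemma \citep[Lemma~13]{McCann}. So the relevant comparison is with McCann's original argument, which the paper adopts.

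Your existence argument is essentially the one used in the paper and in McCann: Theorem~6 of \citet{McCann} gives a coupling with cyclically monotone support, Rockafellar's theorem embeds that support in $\gph\partial\psi$, and the Hausdorff dimension hypothesis on $\mu$ combined with the fact that a convex function is differentiable outside a set of dimension at most $d-1$ gives $\pi = (\id\times\nabla\psi)_{\#}\mu$ (this is Proposition~10 of \citet{McCann}, reproduced as Proposition~\ref{prop:rprgmm} here). That part is fine.

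Your uniqueness argument has a genuine gap, and it is precisely the one you flag yourself. The Fenchel--Young telescoping identity with $g=\psi_2^*-\psi_1^*$ only makes sense when $\int\psi_i^*\,\diff\nu$ and $\int\psi_i\,\diff\mu$ are finite; without second moments these quantities can be $+\infty$, and ``$\infty-\infty=0$'' is not a valid cancellation. The entire point of McCann's Main Theorem, as opposed to the earlier Brenier/Knott--Smith results, is to avoid such moment hypotheses, and McCann does so by abandoning duality altogether. His argument (and the paper's Theorem~\ref{thm:unique}, Step~6) is local and geometric: assuming $\nabla\psi\ne\nabla\phi$ on a set of positive $\mu$-measure, one exploits Lemma~\ref{lem:smallset} (an implicit function theorem for convex differences) to produce a point $z$ and a sublevel set $M=\{y:\psi(y)-\phi(y)<t\}$ with $0<\mu(M)<\infty$, and then invokes Aleksandrov's lemma to show that $[\nabla\psi]^{-1}(\partial\phi(M))$ is strictly smaller than $M$ in $\mu$-measure, contradicting $\nabla\psi_{\#}\mu=\nabla\phi_{\#}\mu$. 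Nothing is integrated globally; the strict inequality is obtained on a bounded set. Your two proposed repairs do not reconstruct this. Moreau--Yosida regularization keeps the conjugates finite but changes the push-forwards, and controlling the limit without moments is exactly the difficulty you would be trying to avoid. The ``pointwise cyclic monotonicity on the union $\gph\partial\psi_1\cup\gph\partial\psi_2$'' idea is closer in spirit, but the union of two cyclically monotone graphs need not be cyclically monotone, and the content of the theorem is precisely that the discrepancy is incompatible with both being optimal for the \emph{same} $(\mu,\nu)$; that incompatibility is what Aleksandrov's lemma extracts, and it does not follow from maximal monotonicity alone. So the uniqueness half of your proposal, as written, is a different (duality-based) route that does not go through under the hypotheses of the theorem.
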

The uniqueness of the gradient $\nabla \psi$ in Theorem~\ref{thm:McCann} together with the preceding description of coupling measures with cyclically monotone support
%in Proposition~\ref{prop:description} 
implies  uniqueness of the coupling measure $\pi$ with cyclically monotone support
\citep[Corollary~14]{McCann}.

%\begin{corollary}[\citet{McCann}, Corollary 14]
%	Suppose $\mu, \nu \in \Prob(\Rd)$, and that one of these measures vanishes on all sets of Hausdorff dimension $d-1$. Then the joint measure $\pi \in \varPi(\mu, \nu)$ with cyclically monotone support is unique.
%\end{corollary}
These results are formulated for pairs of probability measures, but obviously, they remain true for pairs of Borel measures with equal, finite, non-zero mass.
Similar results were derived earlier under more restrictive assumptions in \citet{KnottSmith} and \citet{Brenier, brenier:1991}.

\section{Cyclically monotone transports between infinite measures}
\label{sec:unique}

Motivated by the study of tail limits of cyclically monotone transport plans between regularly varying probability measures in Section~\ref{sec:RV}, we seek to extend McCann's theory as sketched in Section~\ref{sec:OT} to a certain space of Borel measures on $\Rd$ with possibly infinite mass. Let $\Mz(\Rd)$ be the set of all Borel measures $\mu$ on $\Rd \setminus \{0\}$ that are finite on complements of neighbourhoods of the origin, that is, such that $\mu(\Rd \setminus r\ball)$ is finite for all $r > 0$.
%The limiting measure in the definition of regular variation \eqref{eq:MRV_cont} has infinite mass, and, as explained in Section~\ref{sec:OT}, this is why we need to extend McCann's theory to $\Mz(\Rd)$. 
The existence of the gradient of a convex function pushing one such measure to another one will follow from an approximation argument involving a sequence of finite measures, see Theorem~\ref{thm:existence:Mz} below. We first establish the uniqueness of such a gradient. Note that McCann's arguments do not and cannot readily extend to arbitrary infinite measures, not even to $\sigma$-finite ones: the Lebesgue measure on Euclidean space, for instance, is invariant under translations, violating uniqueness of the gradient.

\begin{theorem}[Uniqueness]
	\label{thm:unique}
% CV	
%	Let $\mu \in \Mz(\Rd)$ be a non-zero measure and suppose that $\mu$ vanishes on all sets of Hausdorff dimension at most $d-1$. Let $\psi$ and $\phi$ be convex functions $\Rd \to \reals \cup \{+\infty\}$ that are finite $\mu$-almost everywhere. If $\mu$ is infinite, then suppose additionally that $\mu$ vanishes on the boundary of its support and that $0 \in \intr(\conv(\spt(\mu)))$. If $\nabla\psi_\# \mu = \nabla\phi_\# \mu$, then $\nabla\psi = \nabla \phi$ $\mu$-almost everywhere.
	Let $\mu \in \Mz(\Rd)$ be a non-zero measure and suppose that $\mu$ vanishes on all sets of Hausdorff dimension at most $d-1$. 
	Let $\psi$ and $\phi$ be convex functions $\Rd \to \reals \cup \{+\infty\}$ that are finite $\mu$-almost everywhere. If $\nabla\psi_\# \mu = \nabla\phi_\# \mu \in \Mz(\Rd)$, then $\nabla\psi = \nabla \phi$ $\mu$-almost everywhere. In fact, $\nabla\psi(x) = \nabla\phi(x)$ for every $x \in \spt \mu$ at which the two gradients are defined.
\end{theorem}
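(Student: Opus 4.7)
I first reduce the sharper pointwise conclusion on $\spt\mu$ to the $\mu$-almost-everywhere equality, and then establish the latter via a Fenchel--Young integration identity, using the $\Mz$-structure to handle the possibly infinite mass.

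For the reduction, assume the $\mu$-a.e.\ equality holds and let $x_0\in\spt\mu$ be a point at which both $\nabla\psi(x_0)$ and $\nabla\phi(x_0)$ exist. By~\eqref{eq:domains} one has $x_0\in\intr(\dom\psi)\cap\intr(\dom\phi)$, and \citep[Theorem~25.5]{Rockafellar} ensures that $\nabla\psi$ and $\nabla\phi$ are continuous on their respective sets of differentiability. Since every ball about $x_0$ meets the $\mu$-full set $\{\nabla\psi=\nabla\phi\}$ by the very definition of the support, I can pick $x_n\to x_0$ along that set; continuity then yields $\nabla\psi(x_0)=\nabla\phi(x_0)$.

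For the $\mu$-a.e.\ claim, the core tool is the Fenchel--Young inequality $F_\psi(x,y):=\psi(x)+\psi^*(y)-\langle x,y\rangle\ge 0$, with equality iff $y\in\partial\psi(x)$, and its analogue $F_\phi$. Substituting the equality cases $F_\psi(x,\nabla\psi(x))=0=F_\phi(x,\nabla\phi(x))$ (which hold $\mu$-a.e.\ because $\mu$ vanishes on sets of Hausdorff dimension $d-1$ and hence concentrates on the domain of differentiability of both functions), a short algebraic rearrangement yields the $\mu$-a.e.\ identity
\[
F_\psi(x,\nabla\phi(x))+F_\phi(x,\nabla\psi(x))\;=\;g(\nabla\phi(x))-g(\nabla\psi(x)),\qquad g:=\psi^*-\phi^*,
\]
whose left-hand side is pointwise non-negative. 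Integrating against $\mu$ and using $\nabla\psi_\#\mu=\nabla\phi_\#\mu=\nu$ would formally collapse the right-hand side to $\int g\,\diff\nu-\int g\,\diff\nu=0$, forcing the non-negative integrand to vanish $\mu$-a.e. Since $\partial\psi(x)=\{\nabla\psi(x)\}$ at every point of differentiability, this would then give $\nabla\phi=\nabla\psi$ $\mu$-a.e.

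The main obstacle is making the integration step rigorous, because $\mu$ and $\nu$ may have infinite mass and $\psi^*,\phi^*$ are unbounded, so that $\int g\,\diff\nu-\int g\,\diff\nu$ is formally ``$\infty-\infty$''. The $\Mz$-structure provides what is needed: $\mu$ is finite on the annuli $\{1/n\le|x|\le n\}$ by definition of $\Mz(\Rd)$, and $\nu$ is likewise finite on $\{1/n\le|y|\le n\}$. Applying the pushforward identity separately to the positive and negative parts gives $\int g^\pm(\nabla\phi)\,\diff\mu=\int g^\pm\,\diff\nu=\int g^\pm(\nabla\psi)\,\diff\mu$ as elements of $[0,\infty]$; truncating $g$ to bounded test functions supported on target annuli of finite $\nu$-mass yields honest equalities of finite numbers, and the pointwise inequality $g(\nabla\phi)\ge g(\nabla\psi)$ (which follows from the non-negativity of $F_\psi+F_\phi$) together with monotone convergence along the exhaustion of $\Rd\setminus\{0\}$ delivers the $\mu$-a.e.\ conclusion without ever subtracting infinities.
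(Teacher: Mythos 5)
The reduction of the pointwise claim on $\spt\mu$ to the $\mu$-a.e.\ statement is fine, and the algebraic Fenchel--Young identity
\[
F_\psi(x,\nabla\phi(x))+F_\phi(x,\nabla\psi(x))\;=\;g\bigl(\nabla\phi(x)\bigr)-g\bigl(\nabla\psi(x)\bigr),\qquad g=\psi^*-\phi^*,
\]
is a correct rewriting. But the integration step that is supposed to carry the load has a genuine gap, and it is exactly where the difficulties of the $\Mz$-setting concentrate. If you truncate $g$ to $g_n=g\cdot h_n$ with $h_n$ supported on a target annulus $\{1/n\le|y|\le n\}$ of finite $\nu$-mass, then the push-forward identity $\int g_n(\nabla\phi)\,\diff\mu=\int g_n\,\diff\nu=\int g_n(\nabla\psi)\,\diff\mu$ is indeed an equality of finite numbers; however the pointwise inequality you want to feed into it is about $g$, not $g_n$. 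Since $g_n(\nabla\phi(x))=g(\nabla\phi(x))\,h_n(\nabla\phi(x))$ and $g_n(\nabla\psi(x))=g(\nabla\psi(x))\,h_n(\nabla\psi(x))$, and the two arguments of $h_n$ differ at precisely the points you care about, the truncated difference $g_n(\nabla\phi)-g_n(\nabla\psi)$ has no sign and is not a monotone approximation of the nonnegative integrand $F_\psi(\point,\nabla\phi)+F_\phi(\point,\nabla\psi)$. Truncating $\mu$ at the source to $\{1/n\le|x|\le n\}$ fails for the opposite reason: the push-forwards of the restricted measure by $\nabla\psi$ and $\nabla\phi$ are no longer equal, so the cancellation $\int g\,\diff\nu-\int g\,\diff\nu=0$ is lost. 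There is no way to have both the pointwise inequality and the cancellation simultaneously under either truncation, so the ``monotone convergence along the exhaustion'' step does not deliver the conclusion.

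This is not an incidental technicality. Your argument is essentially the classical duality/integration-by-parts proof of uniqueness (Knott--Smith, Brenier), which requires finite transportation cost or moment conditions to make $\int g\,\diff\nu$ a meaningful quantity. The whole purpose of the theorem is to dispense with exactly those conditions; that is why McCann's original proof, and its extension in the paper, proceed by a topological and measure-theoretic route instead. Concretely, the paper first uses Lemma~\ref{lem:Psi-finite} (a genuinely $\Mz$-specific fact: $\psi$ and $\phi$ attain their minima at $0$, since $(0,0)$ must lie in $\spt\pi$ for infinite-mass couplings), then argues by contradiction: if $\nabla\psi\ne\nabla\phi$ on a set of positive $\mu$-measure, an uncountability-versus-countability argument on the level sets $\{\psi-\phi=t\}$ produces a point $z$ and a sublevel set $M=\{\psi-\phi<t\}$ with $0<\mu(M)<\infty$, after which Aleksandrov's lemma yields two distinct masses for $\partial\varphi(M)$ under the two push-forwards. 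None of this machinery appears in your proposal, and nothing in it plays the role of ruling out the translation-invariance pathology that the paper explicitly flags (Lebesgue measure). If you try to salvage your approach, you would first need to establish that $g\circ\nabla\psi$ and $g\circ\nabla\phi$ are $\mu$-integrable on $\{|x|\ge\eps\}$ and that the push-forward equality survives restriction — and the latter is simply false.
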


The proof of Theorem~\ref{thm:unique} requires several steps and makes extensive use of the property that measures in $\Mz(\Rd)$ are finite on sets bounded away from the origin. 
%in order to construct objects with certain properties. 
In contrast, the following result only requires $\sigma$-finiteness, and its proof remains close to the one of \citep[Proposition~10]{McCann}.

\begin{proposition}[Representation]
	\label{prop:rprgmm}
	Let the support of the (possibly infinite) Borel measure $\pi$ on $\Rd \times \Rd$ be contained in the graph of the subdifferential $\partial\psi$ of the closed convex function $\psi$. Let $\mu$ and $\nu$ denote the marginals of $\pi$, so that $\pi \in \varPi(\mu, \nu)$.
	If $\mu$ is $\sigma$-finite and vanishes on (Borel) sets of Hausdorff dimension $d-1$, then $\nabla\psi$ is defined $\mu$-almost everywhere and $\pi = (\id \times \nabla\psi)_\# \mu$, implying $\nabla\psi_\# \mu = \nu$.
\end{proposition}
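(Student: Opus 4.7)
The plan is to show, in sequence, that (i) $\mu$ is concentrated on $\dom\nabla\psi$, (ii) $\pi$ is concentrated on $\gph\nabla\psi$, and (iii) this forces $\pi = (\id\times\nabla\psi)_\#\mu$. The $\sigma$-finiteness of $\mu$ will only be needed at the very last step, while the hypothesis that $\mu$ charges no set of Hausdorff dimension $d-1$ is used crucially for (i).

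First I would exploit the inclusions \eqref{eq:domains}. Because $\spt\pi\subset\gph\partial\psi$, the first marginal $\mu$ is concentrated on $\dom\partial\psi$. Split
\[
\dom\partial\psi = \intr(\dom\psi)\,\cup\,\bigl(\dom\partial\psi\setminus\intr(\dom\psi)\bigr),
\]
the second piece being contained in $\bnd(\dom\psi)$, which, as the boundary of a convex subset of $\Rd$, has Hausdorff dimension at most $d-1$ and therefore $\mu$-measure zero. On $\intr(\dom\psi)$, the subdifferential $\partial\psi$ is single-valued except on a set of Hausdorff dimension at most $d-1$ (see Section~\ref{sec:prelimin:convex}), which again is $\mu$-null. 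Hence $\mu(\Rd\setminus\dom\nabla\psi)=0$, and at $\mu$-almost every $x$ one has $\partial\psi(x)=\{\nabla\psi(x)\}$.

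Next I would upgrade this to a statement about $\pi$. Let $E=\gph\nabla\psi=\{(x,\nabla\psi(x)):x\in\dom\nabla\psi\}$; this is a Borel subset of $\Rd\times\Rd$ because $\nabla\psi$ is Borel measurable on the Borel set $\dom\nabla\psi$. Since $\supp\pi\subset\gph\partial\psi$ and, on $\dom\nabla\psi$, the graph of $\partial\psi$ coincides with $E$, the complement $\gph\partial\psi\setminus E$ projects into $\dom\partial\psi\setminus\dom\nabla\psi$. Thus
\[
\pi\bigl(\gph\partial\psi\setminus E\bigr)\leq \mu\bigl(\dom\partial\psi\setminus\dom\nabla\psi\bigr)=0,
\]
so $\pi$ is concentrated on $E$.

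Finally I would identify $\pi$ with $(\id\times\nabla\psi)_\#\mu$. Invoking $\sigma$-finiteness of $\mu$, write $\Rd=\bigsqcup_{k}A_{k}$ with $A_{k}$ Borel and $\mu(A_{k})<\infty$; then $\pi_{k}:=\pi\restriction_{A_{k}\times\Rd}$ is finite with first marginal $\mu_{k}:=\mu\restriction_{A_{k}}$, and $\pi_{k}$ is still concentrated on $E$. The map $\proj_{1}\colon E\to\dom\nabla\psi$, $(x,\nabla\psi(x))\mapsto x$, is a Borel bijection with Borel inverse $x\mapsto(x,\nabla\psi(x))$; for any Borel $C\subset\Rd\times\Rd$, the set $B:=\proj_{1}(C\cap E)$ is therefore Borel, and $C\cap E=(B\times\Rd)\cap E$, giving
\[
\pi_{k}(C)=\pi_{k}(C\cap E)=\pi_{k}(B\times\Rd)=\mu_{k}(B)=\mu_{k}\bigl(\{x:(x,\nabla\psi(x))\in C\}\bigr)=(\id\times\nabla\psi)_{\#}\mu_{k}(C).
\]
Summing over $k$ yields $\pi=(\id\times\nabla\psi)_{\#}\mu$, and applying this to $C=\Rd\times A$ gives $\nabla\psi_{\#}\mu=\nu$.

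The main obstacle is step (i): confirming that the union of the boundary of $\dom\psi$ and the singular set of $\partial\psi$ in $\intr(\dom\psi)$ really is carried by a countable cover of Hausdorff-$(d-1)$ sets, so that the assumption on $\mu$ applies. Steps (ii) and (iii) are then essentially bookkeeping, with $\sigma$-finiteness playing the role of reducing a push-forward identity for possibly infinite $\pi$ to the familiar finite case.
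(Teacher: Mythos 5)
Your proof is correct, and at the final step it takes a genuinely different route from the paper. The paper follows McCann's Proposition~10 closely: it first checks that $\pi$ and $(\id\times\nabla\psi)_\#\mu$ agree on rectangles $M\times N$, and then invokes $\sigma$-finiteness to extend that agreement from the generating $\pi$-system to all Borel sets (restricting to the slabs $B_n\times\Rd$ to reduce to the finite case). You instead first observe that $\pi$ is concentrated on $E=\gph\nabla\psi$, and then identify $\pi$ with $(\id\times\nabla\psi)_\#\mu$ directly on an arbitrary Borel set $C$, using the Borel isomorphism $x\mapsto(x,\nabla\psi(x))$ between $\dom\nabla\psi$ and $E$; note that the set $B=\proj_1(C\cap E)$ is Borel simply because it is the preimage of $C$ under that Borel map, so no appeal to Lusin--Suslin is required. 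Your route is more direct and, as a bonus, shows that $\sigma$-finiteness is actually dispensable for this representation: the chain $\pi(C)=\pi(C\cap E)=\pi(B\times\Rd)=\mu(B)$ works for any Borel measure, so the decomposition into $A_k$ that you invoke ``for $\sigma$-finiteness'' is not needed. The paper's rectangle-then-extend route, by contrast, does genuinely need $\sigma$-finiteness for the uniqueness-of-extension step. Your step~(i) is exactly the same as McCann's first paragraph (boundary of $\dom\psi$ plus the singular set of $\partial\psi$ in the interior, both of Hausdorff dimension at most $d-1$), so that part matches the paper.
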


\begin{remark}
	\label{rem:unique}
	In Theorem~\ref{thm:unique}, the condition that the measure $\nabla\psi_\# \mu = \nabla\phi_\# \mu$ belongs to $\Mz(\Rd)$ is used only to show via Lemma~\ref{lem:Psi-finite} that $\psi(0)$ and $\phi(0)$ are finite in case $\mu$ has infinite mass. A weaker condition was thus possible, but since the focus in our paper is on the space $\Mz(\Rd)$, we have opted for the current formulation.
\end{remark}

\subsection*{Proofs}

\begin{proof}[Proof of Theorem~\ref{thm:unique}]
If $0\notin{\supp\mu}$, then ${\supp\mu}$, being closed, is bounded away from $0$. But as $\mu\in\Mz(\Rd)$, we must have $0 < \mu(\Rd) < +\infty$ and we can resort to the uniqueness proof in \citet[pp.~318--319]{McCann}. Therefore, consider the case that $\mu(\Rd) = +\infty$ and thus $0 \in {\supp\mu}$. 

% CdV \textcolor{red}{As $\psi$ is finite $\mu$-almost everywhere, $\spt \mu$ is contained in $\cl(\dom \psi)$, and since the latter set is convex, its boundary is a $\mu$-null set. \ldots}

\medskip{}

\noindent\emph{Step 1.} ---
Since $\psi$ and $\phi$ are finite $\mu$-a.e., ${\supp\mu}$ must lie in $\cl(\dom\psi)\cap\cl(\dom\phi)$. 
Both closures are convex.
Therefore, their boundaries have Hausdorff dimension at most $d-1$, so they are $\mu$-null sets. Since a convex function is differentiable in the interior of its domain except perhaps at a set of Hausdorff dimension at most $d-1$, we find that $\psi$ and $\phi$ are differentiable almost everywhere. Moreover, we may take closed convex functions for $\psi$ and $\phi$, since closure only affects their values on the boundaries of their domains. 
By Lemma~\ref{lem:Psi-finite}, $\psi$ and $\phi$ attain their minima at $0$, so that $\psi(0)$ and $\phi(0)$ are certainly finite. Therefore, without losing generality, take $\psi(0)= \phi(0)= 0$.
Furthermore, we can restrict attention to
\begin{equation}
\label{eq:calD}
	\mathcal{D} := \intr(\dom\psi) \cap \intr(\dom\phi) = \intr(\dom\psi\cap\dom\phi)
\end{equation}
and its subsets [see~\eqref{eq:domains}]
%(see Lemma \ref{lem:domains})
\begin{align}
\label{eq:domnablas}
V & :=\dom(\nabla\psi) \cap \dom(\nabla\phi), \\
\nonumber
W & :=\left\{ y\in V:\,\nabla\psi(y)\neq\nabla\phi(y)\right\} ,\\
\label{eq:psiphidiff0}
%A & :=\left\{ y\in\mathcal{D}:\,\psi(y)-\phi(y)\neq\psi(0)-\phi(0)\right\} .
A & :=\left\{ y\in\mathcal{D}:\,\psi(y)-\phi(y)\neq 0 \right\} .
\end{align}

By \citet[Theorem~2.2]{A-A} (see also \citep{AndersonKlee}),
$\mathcal{D}\setminus V$ is contained in the union of two sets with Hausdorff dimension at most $d-1$, so 
\begin{equation}
	\label{eq:muDV}
	\mu(\mathcal{D}\setminus V) = 0.
\end{equation}
%Therefore, by Lemma~\ref{lem:Psi-finite}, $\psi(0)$ and $\phi(0)$ are finite, so $A$ is well-defined. Furthermore, 

Our claim is equivalent to $\mu(W)=0$. 
Since the gradient of a convex function is continuous on its domain, the set $W$ is relatively open in $V$, that is, $W = V \cap U$ for some open $U \subset \Rd$. Since $V \subset \mathcal{D}$ and since $\mathcal{D}$ is open, we may assume that $U \subset \mathcal{D}$; otherwise, intersect $U$ with $\mathcal{D}$. As the complement of $V$ in $\mathcal{D}$ is a $\mu$-null set, we have $\mu(W) = \mu(U)$. From $\mu(W) = 0$, it then follows that $\mu(U) = 0$ and thus that $U$ is disjoint from $\spt \mu$. But then $W$ is disjoint from $\spt \mu$ as well. This implies that $\nabla \psi(y) = \nabla \phi(y)$ for every $y \in \spt \mu$ in the domain of $\nabla \psi$ and $\nabla \phi$.

To prove that $\mu(W)=0$, we assume the opposite:
\begin{equation}
\label{eq:wrongassumption}
	\mu(W) > 0, 
\end{equation}
and we will construct a subset of $\Rd$ that receives different masses from the two push-forwards $\nabla\psi_{\#}\mu$ and $\nabla\phi_{\#}\mu$, in contradiction to the assumption of the theorem.

\medskip{}

\noindent\emph{Step 2.} ---
By Lemma~\ref{lem:smallset}, the set
\[
	W \setminus A = \{ y\in V : 
%		\psi(y) - \phi(y) = \psi(0)-\phi(0) \text{ and } \nabla\psi(y)\ne\nabla\phi(y)
		\psi(y) - \phi(y) = 0 \text{ and } \nabla\psi(y)\ne\nabla\phi(y)
	\}
\]
has Hausdorff dimension at most $d-1$.
Therefore, $\mu(W\setminus A)=0$. By \eqref{eq:wrongassumption}, we obtain 
\begin{equation}
\label{eq:wrongassumption2}
	\mu(W \cap A) > 0. 
\end{equation}

\medskip{}

\noindent\emph{Step 3.} --- 
Recall $\mathcal{D}$ in \eqref{eq:calD}. For $t\in[-\infty, +\infty]$, define
\begin{align*}
A_{t}^{-} &:= \left\{ y\in\mathcal{D}:\,\psi(y)-\phi(y)<t\right\} ,\\
A_{t}^{+} &:= \left\{ y\in\mathcal{D}:\,\psi(y)-\phi(y)>t\right\} ,\\
A_{t}^{0} &:= \left\{ y\in\mathcal{D}:\,\psi(y)-\phi(y)=t\right\} .
\end{align*}
%Write 
%\[ 
%	t_{0}:=\psi(0)-\phi(0). 
%\] 
%Then $A_{t_{0}}^{-}\cup A_{t_{0}}^{+}$ is a partition of $A$ in \eqref{eq:psiphidiff0} and thus, by \eqref{eq:wrongassumption2},
Then $A_{0}^{-}\cup A_0^{+}$ is a partition of $A$ in \eqref{eq:psiphidiff0} and thus, by \eqref{eq:wrongassumption2},
\[ 
	\mu(W\cap A_{0}^{-})+\mu(W\cap A_{0}^{+})=\mu(W\cap A)>0. 
\]
Therefore, $\mu(W\cap A_{0}^{-})>0$ or $\mu(W\cap A_{0}^{+})>0$. Without loss of generality, assume
\begin{equation}
\label{eq:wrongassumption3}
	\mu(W\cap A_{0}^{-}) > 0.
\end{equation}
In the other case, just switch the roles of $\psi$ and $\phi$ in the next steps.
\medskip{}

\noindent\emph{Step 4.} --- 
For every $t<0$, the closure of the set $A_{t}^{-}$ does not contain the origin, since $\psi$ and $\phi$ are continuous and $\psi(0) - \phi(0) = 0 > t$. As $\mu\in\mathcal{M}_{0}(\Rd)$, we must have 
\[ 
	\forall t < 0, \qquad \mu(A_{t}^{-}) < +\infty. 
\]

\medskip{}

\noindent\emph{Step 5.} --- 
Recall $V$ in \eqref{eq:domnablas}. We claim that there exists $z \in V$ with the following three properties:
\begin{itemize}
	\item $t := \psi(z) - \phi(z) < 0$;
	\item $\nabla{\psi}(z) \ne \nabla{\phi}(z)$;
	\item for all $\delta > 0$,
	\begin{equation} 
	\label{eq:mu_pos}
	\mu \left( A_{t}^{-} \cap  \oball(z, \delta) \right) > 0,
	\end{equation}
	where $\oball(\point,\point)$ denotes the open ball with given centre and radius.
\end{itemize}

To prove the claim, define, for every $t < 0$, the sets
\begin{align*}
	\Omega_{t}
	&:=
	A_{t}^{0}\cap W\cap\supp\mu \\
	&\phantom{:}=
	\{ x \in V : \psi(x) - \phi(x) = t \text{ and } \nabla\psi(x) \ne \nabla\phi(x) \} \cap \supp \mu, \\
	\Xi_{t}
	&:=
	\{ x \in \Omega_{t} : \exists \delta > 0, \mu ( A_{t}^{-} \cap \oball(x, \delta) ) = 0 \}.
\end{align*}
We need to show that there exists $t < 0$ such that $\Omega_{t} \setminus \Xi_{t} \neq \emptyset$. To this end, define
\begin{align*}
	\mathcal{T}_{\Omega} &:= \{ t \in (-\infty, 0) : \Omega_{t} \neq \emptyset \}, \\
	\mathcal{T}_{\Xi} &:= \{ t \in (-\infty, 0) : \Xi_{t} \neq \emptyset \}.
\end{align*}
We will show that $\mathcal{T}_{\Omega}$ is uncountable while $\mathcal{T}_{\Xi}$ is at most countable, so that there must exist (uncountable many) $t < 0$ such that $\Omega_{t}$ is nonempty and $\Xi_{t}$ is empty.

By Lemma~\ref{lem:smallset}, for every $t < 0$, the set $\Omega_{t}$ has Hausdorff measure at most $d-1$ and is therefore a $\mu$-null set. Still, their union over all $t$ less than $0$ is
\[
	\textstyle\bigcup_{t < 0} \Omega_t
	=
	\textstyle\bigcup_{t < 0} A_{t}^{0} \cap W \cap \supp \mu
	=
	A_{0}^{-} \cap W \cap \supp \mu,
\]
a set receiving positive mass from $\mu$ by \eqref{eq:wrongassumption3}. Therefore, $\mathcal{T}_\Omega$ must be uncountable.

We have $\Xi_{t} = \bigcup_{n \in \NN} \Xi_{t,n}$ and thus $\mathcal{T}_{\Xi} = \bigcup_{n \in \NN} \mathcal{T}_{n}$, where, for $n \in \NN$,
\begin{align*}
	\Xi_{t,n}
	&:=
	\{ x \in \Omega_{t} : \mu ( A_{t}^{-} \cap \oball(x, 2/n) ) = 0 \}, \\
	\mathcal{T}_{n} 
	&:= \{ t \in (-\infty, 0)  : \Xi_{t,n} \neq \emptyset \}.
\end{align*}
To show that $\mathcal{T}_{\Xi}$ is countable, it is sufficient to show that each $\mathcal{T}_{n}$ is countable.

Fix $n \in \NN$. Let $s, t \in \mathcal{T}_{n}$ with $s < t$ and let $x_{s} \in \Xi_{s,n}$ and $x_t \in \Xi_{t,n}$. As $\Xi_{s,n} \subset \Omega_s \subset \supp \mu$, the point $x_s$ belongs to the support of $\mu$ and can therefore not belong to the open $\mu$-null set $A_{t}^{-} \cap \oball(x_{t}, 2/n)$. But as $\psi(x_{s}) - \phi(x_{s}) = s < t$, we do have $x_{s} \in A_{t}^{-}$. Therefore, $x_{s}$ cannot be an element of $\ball(x_{t}, 2/n)$, that is, $\norm{x_{s} - x_{t}} \ge 2/n$. By the triangle inequality, the balls $\oball(x_s, 1/n)$ and $\oball(x_t, 1/n)$ must be disjoint. Define
\[
\forall t \in \mathcal{T}_{n}, \qquad
G_t := {\textstyle\bigcup}_{x \in \Xi_{t,n}} \oball(x, 1/n).
\]
Each $G_t$ is non-empty and open, and by the argument just given, $G_s \cap G_t = \emptyset$ whenever $s, t \in \mathcal{T}_{n}$ with $s < t$. Since $\Rd$ is separable, $\mathcal{T}_n$ is at most countable, as required. The claim made at the beginning of this step is proved.

\medskip{}

\emph{Step 6.} ---
For $z$ from Step 5, define the closed convex functions
$\varphi:=\phi-\phi(z)$ and $\bar{\varphi}:=\psi-\psi(z)$.
Then $\varphi(z)=\bar{\varphi}(z)$ and $\nabla\varphi(z)\neq\nabla\bar{\varphi}(z)$.
Let $t=\psi(z)-\phi(z)<0$ and consider the open set
\[
	M:=A_{t}^{-}=\{y\in\mathcal{D}:\varphi(y)>\bar{\varphi}(y)\}.
\]
By Steps~4 and~5, we have $0 < \mu(M) < +\infty$.

At this point, we can apply the reasoning of \citet[pp.~318--319]{McCann}. It shows
that $\partial\varphi(M)$ is a Borel set and that by Aleksandrov's
lemma \citep[Lemma~13]{McCann}, the set $Z:=[\nabla\bar{\varphi}]^{-1}(\partial\varphi(M))$
is at a nonzero distance from $z$ and satisfies $Z\subset M$.
Therefore, \eqref{eq:mu_pos} implies that $\mu(M\setminus Z)>0$,
and since $\mu(M)$ is finite, that $\mu(Z)<\mu(M)$. Hence, 
\begin{align*}
	[\nabla\bar{\varphi}_\#\mu](\partial\varphi(M)) 
	&=\mu \left([\nabla\bar{\varphi}]^{-1}(\partial\varphi(M))\right)=\mu(Z)<\mu(M)\\
	&\leq \mu\left([\nabla\varphi]^{-1}(\partial\varphi(M))\right)
	=[\nabla\varphi_\#\mu](\partial\varphi(M)),
\end{align*}
the last inequality deriving from \eqref{eq:muDV} and 
\[
M\subset(\mathcal{D}\setminus V)\cup[\nabla\varphi]^{-1}(\nabla\varphi(M))\subset(\mathcal{D}\setminus V)\cup[\nabla\varphi]^{-1}(\partial\varphi(M)).
\]
As a consequence, $\nabla\bar{\varphi}_\#\mu\neq\nabla\varphi_\#\mu$, in contradiction to the assumption of the theorem. The inequality \eqref{eq:wrongassumption} must therefore be false, and the proof is complete.
\end{proof}

\begin{lemma}
	\label{lem:Psi-finite}
	Let $\mu, \nu \in \Mz(\Rd)$ with $\mu(\Rd) = \nu(\Rd) = +\infty$ and let $\pi \in \varPi(\mu, \nu)$ be supported by $\gph( \partial \psi )$ for some closed convex function $\psi$. Then $\psi(0) = \inf_{x \in \Rd} \psi(x)$. This holds in particular if $\mu \in \Mz(\Rd)$ has infinite mass, $\psi$ is a closed convex function that is $\mu$-a.e.\ differentiable, and $\nu = \nabla\psi_{\#} \mu \in \Mz(\Rd)$.
\end{lemma}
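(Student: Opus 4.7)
The plan is to exploit the structure of $\Mz(\Rd)$: a measure in this class with infinite total mass must accumulate arbitrarily large mass in every neighbourhood of the origin. Since both $\mu$ and $\nu$ are such measures, I would first show that this forces $\pi$ to place infinite mass on every product neighbourhood of $(0,0)$. Fix $\eps>0$ and write
\[
	\pi\bigl(\oball(0,\eps)\times\oball(0,\eps)\bigr)
	= \mu\bigl(\oball(0,\eps)\bigr) - \pi\bigl(\oball(0,\eps) \times (\Rd\setminus\oball(0,\eps))\bigr),
\]
where the subtracted term is at most $\nu(\Rd\setminus\oball(0,\eps))$, which is finite because $\nu\in\Mz(\Rd)$. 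Since $\mu(\oball(0,\eps))=+\infty$, the left-hand side is infinite, hence positive, so this product ball meets $\supp\pi$.

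Next, I would extract a sequence $(x_n,y_n)\in\supp\pi\subset\gph(\partial\psi)$ with $\norm{x_n},\norm{y_n}\le 1/n$. Each pair satisfies the subgradient inequality $\psi(z)\ge\psi(x_n)+\inpr{y_n,z-x_n}$ for every $z\in\Rd$. Passing to the limit, the scalar product tends to $0$, while lower semicontinuity of the closed convex function $\psi$ at the origin gives $\psi(0)\le\liminf_n\psi(x_n)$. Combining these yields $\psi(z)\ge\psi(0)$ for every $z\in\Rd$, which is the desired identity $\psi(0)=\inf_{x\in\Rd}\psi(x)$.

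For the ``in particular'' clause, I would reduce it to the first part by producing a witness $\pi$. Setting $\pi:=(\id\times\nabla\psi)_\#\mu$ (which makes sense because $\nabla\psi$ is defined $\mu$-almost everywhere), the marginals of $\pi$ are $\mu$ and $\nabla\psi_\#\mu=\nu$, and $\supp\pi$ lies in the closure of the graph of $\nabla\psi$. Since $\partial\psi$ is maximal monotone, its graph is closed and contains the graph of $\nabla\psi$, so $\supp\pi\subset\gph(\partial\psi)$, fulfilling the hypothesis of the first part.

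The main obstacle I anticipate is the first step: proving that the mass-balancing between marginals in $\Mz(\Rd)$ forces infinite $\pi$-mass in a doubly small product ball. Once one sees that infinite total mass of measures in $\Mz(\Rd)$ means ``all the excess mass sits arbitrarily close to $0$'' and that the marginal constraints match that accumulation across the two factors, the rest reduces to the subgradient inequality together with lower semicontinuity. A minor subtlety is that $\Mz(\Rd)$ consists of Borel measures on $\Rd\setminus\{0\}$, so the origin itself is not a point of $\supp\mu$ or $\supp\nu$; but the argument uses only sequences converging to $0$, where this distinction is harmless.
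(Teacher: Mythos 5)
Your proof is correct and follows essentially the same route as the paper: force $\pi$ to put infinite mass in arbitrarily small product balls around $(0,0)$, deduce that $\supp\pi$ contains points arbitrarily close to $(0,0)$, and combine the subgradient inequality with lower semicontinuity to conclude. The paper packages the last step slightly more economically — since $\supp\pi$ is closed, $(0,0)\in\supp\pi\subset\gph(\partial\psi)$, so $0\in\partial\psi(0)$ and the inequality $\psi(z)\ge\psi(0)$ is immediate with no limiting argument — and it derives the infinite mass near the origin from the general fact (their Lemma on couplings in $\Mz$) that $\pi\in\Mz(\Rd\times\Rd)$, whereas you compute it directly from the marginal constraints; both of these are minor presentational differences, not gaps.
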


\begin{proof}
	The coupling measure $\pi$ must have infinite mass but still belong to $\Mz(\Rd \times \Rd)$, see Lemma~\ref{lem:rlcmpct}(a). Therefore, necessarily $\pi(\eps \ball \times \eps \ball) = +\infty$ for every $\eps > 0$, which means that $\spt \pi$ meets $\eps \ball \times \eps \ball$ for every $\eps > 0$. But the support of a Borel measure is always closed, and thus $(0, 0) \in \spt \pi \subset \gph(\partial \psi)$, the latter inclusion holding by assumption. But then also $0 \in \dom \partial \psi \subset \dom \psi$ in view of \eqref{eq:domains}. By definition of the subdifferential we find
	\[
		\forall x \in \Rd, \qquad
		\psi(x) \ge \psi(0) + \inpr{x - 0, 0} = \psi(0),
	\]
	as required.
%	Since moreover $\pi$ does not charge the origin, there must exist points $(x_n, y_n) \in \spt \pi \subset \gph \partial \psi$ such that $0 \ne x_n \to 0$ and \js{Klopt dit? Waarom kunnen we onderstellen dat zowel $x_n$ als $y_n$ niet $0$ zijn? Waarom hebben we dit nodig?} $0 \ne y_n \to 0$ as $n \to \infty$. In particular $x_n \in \dom \psi$ for every $n \in \NN$. For every $z \in \Rd$, we have
%	\[
%		\forall n \in \NN, \qquad 
%		\psi(z) 
%		\ge \psi(x_n) + \inpr{z - x_n, y_n} 
%		\ge \psi(x_n) - \norm{z - x_n} \norm{y_n}.
%	\]
%	Let $n \to \infty$ and use the lower semicontinuity of $\psi$ to conclude that
%	\[
%		\psi(0) 
%		\le \liminf_{n \to \infty} \psi(x_n)
%		\le \liminf_{n \to \infty} \bigl( \psi(z) + \norm{z - x_n} \norm{y_n} \bigr)
%		= \psi(z).
%	\]
	
	If $\psi$ is $\mu$-a.e.\ differentiable and if $\nu = \nabla\psi_{\#} \mu$, then $\pi = (\id \times \nabla\psi)_{\#} \mu$ belongs to $\varPi(\mu, \nu)$ and the support of $\pi$ is contained in $\cl(\{ (x, \nabla\psi(x)) : x \in \dom \nabla\psi \})$, which is in turn a subset of $\gph( \partial \psi )$. Note that $\gph(\partial \psi)$ is closed since $\psi$ was assumed to be closed and thus $\partial \psi$ is maximal cyclically monotone by Rockafellar's theorem (Section~\ref{sec:prelimin:convex}).
\end{proof}

\begin{lemma}
	\label{lem:smallset}
	Let $\psi$ and $\phi$ be convex functions on $\Rd$. For every $t \in \reals$, the set
	\begin{equation}
	\label{eq:smallset}
		\{ 
			x \in \dom(\nabla\psi) \cap \dom(\nabla\phi) : 
			\psi(x) - \phi(x) = t \text{ and } \nabla\psi(x) \ne \nabla\phi(x)
		\}
	\end{equation}
	has Hausdorff dimension at most $d-1$.
\end{lemma}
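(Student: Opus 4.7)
My plan is to reduce the statement to the classical Anderson--Klee bound quoted at the start of Section~\ref{sec:prelimin:convex}, namely that the non-differentiability set of a proper convex function inside the interior of its domain has Hausdorff dimension at most $d-1$. The key trick is to introduce the pointwise maximum $h := \max(\psi, \phi)$, which is again proper convex: every coincidence point $x$ of $\psi$ and $\phi$ at which $\nabla\psi(x) \neq \nabla\phi(x)$ is automatically a point of non-differentiability of $h$.

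First I would reduce to $t = 0$ by replacing $\psi$ with $\tilde\psi := \psi - t$, which leaves $\nabla\psi$ unchanged and turns $\psi(x) - \phi(x) = t$ into $\tilde\psi(x) = \phi(x)$. Setting $h := \max(\tilde\psi, \phi)$, I obtain a proper convex function with $\dom h = \dom \tilde\psi \cap \dom \phi$. For every $x$ in the set~\eqref{eq:smallset}, one has $\tilde\psi(x) = \phi(x) = h(x)$, so $\tilde\psi$ and $\phi$ are two convex minorants of $h$ that agree with $h$ at $x$. From the subgradient inequality, combined with $h \ge \tilde\psi$ globally, it is immediate that $\partial\tilde\psi(x) \subset \partial h(x)$, and similarly $\partial\phi(x) \subset \partial h(x)$. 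Therefore the two distinct points $\nabla\psi(x) = \nabla\tilde\psi(x)$ and $\nabla\phi(x)$ both lie in $\partial h(x)$, so $\partial h(x)$ contains at least two elements and $h$ fails to be differentiable at $x$.

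To conclude, I note that the set~\eqref{eq:smallset} is contained in $\dom\nabla\psi \cap \dom\nabla\phi \subset \intr(\dom\psi) \cap \intr(\dom\phi) = \intr(\dom h)$ by~\eqref{eq:domains}, and hence sits inside the set of non-differentiability points of $h$ within $\intr(\dom h)$. That latter set has Hausdorff dimension at most $d-1$ by Anderson--Klee, yielding the claim. I do not anticipate any serious obstacle: the only ingredient beyond Section~\ref{sec:prelimin:convex} is the elementary observation about the maximum of two convex functions, and the identification of the various domains follows directly from~\eqref{eq:domains}.
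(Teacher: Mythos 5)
Your argument is correct, and it takes a genuinely different route from the paper's. The paper invokes McCann's Implicit Function Theorem (Theorem~17 and Corollary~19 in \citet{McCann}), which gives, for each $x$ in the set~\eqref{eq:smallset}, a neighbourhood $N_x$ on which the level set $\{y : \psi(y)-\phi(y)=t\}$ has finite $(d-1)$-dimensional Hausdorff measure; a Lindel\"{o}f covering argument then finishes the proof. Your approach bypasses the implicit function theorem entirely: after the harmless reduction to $t=0$ via $\tilde\psi = \psi - t$, you form $h = \max(\tilde\psi,\phi)$ and exploit the elementary observation that at any coincidence point $x$ with $\nabla\tilde\psi(x)\ne\nabla\phi(x)$, both gradients lie in $\partial h(x)$ (each is a subgradient of its own function at $x$, where it touches $h$ from below), so $\partial h(x)$ is not a singleton and $h$ is non-differentiable at $x$. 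Since $\dom\nabla\psi\cap\dom\nabla\phi \subset \intr(\dom\psi)\cap\intr(\dom\phi) = \intr(\dom h)$ by~\eqref{eq:domains}, the set~\eqref{eq:smallset} sits inside the non-differentiability set of $h$ in $\intr(\dom h)$, and Anderson--Klee (already quoted in Section~\ref{sec:prelimin:convex}) gives the dimension bound. Your proof is more self-contained, relying only on tools already introduced in the preliminaries; the paper's route via McCann's IFT gives somewhat finer local information (finiteness of the $(d-1)$-dimensional Hausdorff measure on neighbourhoods, not just a dimension bound), but that extra precision is not used anywhere in the paper.
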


\begin{proof}
	Let $X$ be the set in \eqref{eq:smallset}. By McCann's Implicit Function Theorem \citep[Theorem~17 and Corollary~19]{McCann}, every point $x$ in $X$ has an open neighbourhood $N_{x}$ such that the $(d-1)$-dimensional Hausdorff measure of the set
	\[
		\{ y \in N_{x} : \psi(y)-\phi(y) = t \}
	\] 
	is finite. The collection of sets $\{N_{x} : x \in X\}$ forms an open cover of $X$. By the Lindel\"{o}f property of $\Rd$, there exists a countable subcover, i.e., a countable set $Q \subset X$ such that $X$ is a subset of $\bigcup_{x \in Q}N_{x}$ and then also of 
	\[ 
		\textstyle\bigcup_{x \in Q} \{y \in N_{x} : \psi(y)-\phi(y)=t\}. 
	\]
	The latter set is a countable union of sets of Hausdorff dimension at most $d-1$ and has therefore Hausdorff dimension at most $d-1$ as well. 
% https://math.stackexchange.com/questions/772140/countable-stability-of-hausdorff-dimension
\end{proof}

\begin{proof}[Proof of Proposition~\ref{prop:rprgmm}]
	The proof is almost identical to the one of Proposition~10 in \citet{McCann}. The first paragraph of that proof yields that $\psi$ is differentiable on a Borel set whose complement is a $\mu$-null set and that $\nabla\psi$ is Borel measurable.
	
	In the second paragraph of the cited proof, it is verified that $\pi$ and $(\id \times \nabla\psi)_\# \mu$ coincide on rectangles $M \times N$ of Borel sets $M, N \subset \Rd$. This property implies that $\pi$ and $(\id \times \nabla\psi)_\# \mu$ are equal, even if they are not necessarily finite, thanks to the hypothesis that $\mu$ is still $\sigma$-finite. Indeed, let $(B_n)_n$ be a sequence of Borel sets of $\Rd$ whose union is $\Rd$ and such that $\mu$ is finite on each $B_n$. Then $\pi(B_n \times \Rd) = \mu(B_n) = ((\id \times \nabla\psi)_\# \mu)(B_n \times \Rd)$ are finite too. The restrictions of $\pi$ and $(\id \times \nabla\psi)_\# \mu$ to $B_n \times \Rd$ coincide on Borel measurable rectangles, and thus they must be equal. As the union of the sets $B_n \times \Rd$ over all $n$ is equal to $\Rd \times \Rd$, we obtain the stated equality.
	
	Finally, from $\pi = (\id \times \nabla\psi)_\# \mu$ it follows that 
	\[
	\nu(B) 
	= \pi(\Rd \times B) 
	= \mu \bigl( (\id \times \nabla\psi)^{-1}(\Rd \times B) \bigr) 
	= \mu \bigl( (\nabla\psi)^{-1}(B) \bigr) 
	= (\nabla\psi_\#\mu)(B)
	\] 
	for every Borel set $B \subset \Rd$.
\end{proof}

% ============================================================
\section{Existence and stability of optimal coupling measures}
\label{sec:stability} 

\subsection{Convergence of infinite measures}

To study the stability of coupling measures and transport plans between measures in $\Mz(\Rd)$, a notion of convergence is required that can handle sequences of measures with infinite mass. To this end, \citet{HultLindskog2006} consider the set $\mathcal{C}_{0}(\Rd)$ of continuous, bounded, real functions $f$ on $\reals^{d}$ for which there exists $r > 0$ such that $f(x) = 0$ whenever $\norm{x} \le r$. They endow $\Mz(\Rd)$ with the topology generated by open sets of the form
\[
\left\{
\nu \in \Mz(\reals^{d}) :
\lvert\textstyle\int f_{i} \,\diff\nu - \textstyle\int f_{i} \,\diff\mu\rvert 
< \eps,\, 
i=1,\ldots,k
\right\} 
\]
for $\mu \in \Mz(\Rd)$, positive integer $k$, functions $f_{1},\ldots,f_{k}$ in $\mathcal{C}_{0}(\Rd)$, and $\eps > 0$. The resulting topology on $\Mz(\Rd)$ is metrizable \citep[Theorem~2.3]{HultLindskog2006}. Convergence in $\Mz(\Rd)$ is denoted by $\vto$ and, in contrast to weak convergence, allows mass to accumulate near the origin (but not at infinity). A convenient criterion for $\nu_n \vto \nu$ is that $\int f \diff \nu_n \to \int f \diff \nu$ for all $f \in \mathcal{C}_0(\Rd)$. In many respects, $\Mz$-convergence is similar to weak convergence of finite Borel measures, with versions of the  Portmanteau lemma, the continuous mapping theorem, and criteria for relative compactness, see for instance \citep{HultLindskog2006, lindskog+r+r:2014}. 

Although all results will be formulated in terms of sequences of measures converging in $\Mz(\Rd)$, they also apply to weakly convergent sequences of probability measures. Clearly, the restriction of a probability measure on $\Rd$ to the complement of $\{0\}$ belongs to $\Mz(\Rd)$. Moreover, weak convergence of probability measures, denoted by $\wto$, is equivalent to convergence in $\Mz(\Rd)$ of their restrictions to $\Rd \setminus \{0\}$. 

\iffalse
\begin{lemma}
	\label{lem:weakMz}
	Let $\mu_n, \mu \in \Prob(\Rd)$. As $n \to \infty$, we have $\mu_n \wto \mu$ in $\Prob(\Rd)$ if and only if $\mu_n(\point \setminus \{0\}) \vto \mu(\point \setminus \{0\})$ in $\Mz(\Rd)$.
\end{lemma}

\begin{proof}
	On the one hand, weak convergence implies $\Mz$-convergence since the former involves a larger class of test functions of which the integrals should converge, namely all continuous, bounded real functions on $\Rd$, which includes $\mathcal{C}_0(\Rd)$.
	
	On the other hand, suppose $\mu_n(\point \setminus \{0\}) \vto \mu(\point \setminus \{0\})$ as $n \to \infty$ and let $G \subset \Rd$ be open. By the Portmanteau lemma for weak convergence, we need to show that $\liminf_{n \to \infty} \mu_n(G) \ge \mu(G)$. We consider three cases: $0 \not\in \cl(G)$, $0 \in G$, and $0 \in \bnd(G)$. If $0 \not\in \cl(G)$, the desired inequality follows from \citep[Theorem~2.4(iii)]{HultLindskog2006}, and if $0 \in G$, it follows from the same theorem applied to $F = \Rd \setminus G$, yielding $\liminf_n \mu_n(G) = 1 - \limsup_n \mu_n(F) \ge 1 - \mu(F) = \mu(G)$. Finally, if $0 \in \bnd G$, we apply the cited theorem to $G_k = \{x \in G : \norm{x} > 1/k \}$ for $k \in \NN$: as $0 \not\in \cl(G_k)$, we find $\liminf_n \mu_n(G) \ge \liminf_n \mu_n(G_k) \ge \mu(G_k)$. Since $\sup_k \mu(G_k) = \mu(G)$, the proof is complete.
\end{proof}
\fi

\begin{lemma}
	\label{lem:weakMz}
	For a sequence of measures $(\mu_n)_n$ in $\Prob(\reals^d)$, we have $\mu_n \wto \mu \in \Prob(\reals^d)$ if and only if $\mu_n(\point \setminus \{0\}) \vto \mu' \in \Mz(\reals^d)$. Furthermore, $\mu'= \mu(\point \setminus \{0\})$.
\end{lemma}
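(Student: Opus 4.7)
The plan is to argue both directions by playing $\mathcal{M}_0$-convergence against the Portmanteau/Prokhorov machinery for weak convergence, with tightness as the bridge.

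For the forward direction, suppose $\mu_n \wto \mu$ in $\Prob(\reals^d)$. Every $f \in \mathcal{C}_0(\reals^d)$ is continuous and bounded on $\reals^d$ and vanishes on some ball $r\ball$ containing the origin, so $\int f \, \diff\mu_n \to \int f \, \diff \mu$ by weak convergence, and dropping the null point $\{0\}$ from the domain of integration changes nothing. Thus $\int f \, \diff \mu_n(\point \setminus \{0\}) \to \int f \, \diff \mu(\point \setminus \{0\})$, which by the criterion in the paragraph preceding the lemma is exactly $\mu_n(\point\setminus\{0\}) \vto \mu(\point\setminus\{0\})$ in $\Mz(\reals^d)$. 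This simultaneously identifies the candidate $\mu'$.

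For the reverse direction, assume $\mu_n(\point \setminus \{0\}) \vto \mu'$ in $\Mz(\reals^d)$. The key step is to derive tightness of $(\mu_n)$ as a sequence in $\Prob(\reals^d)$. For any $r > 0$ with $\mu'(\{x : \norm{x} = r\}) = 0$, the set $\{x : \norm{x} > r\}$ is bounded away from $0$ and is a continuity set for $\mu'$; the Portmanteau lemma in $\Mz(\reals^d)$ \citep[Theorem~2.4]{HultLindskog2006} yields $\mu_n(\{\norm{x} > r\}) \to \mu'(\{\norm{x} > r\})$. Since $\mu_n$ are probability measures, this gives $\mu'(\{\norm{x} > r\}) \le 1$, and since $\mu' \in \Mz(\reals^d)$, the quantity $\mu'(\{\norm{x} > r\})$ is finite and tends to $0$ as $r \to \infty$. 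Excluding countably many radii, we can pick, for any $\eps > 0$, a continuity radius $R$ with $\mu'(\{\norm{x} > R\}) < \eps/2$ and then $\mu_n(\{\norm{x} > R\}) < \eps$ for $n$ large; enlarging $R$ handles the finitely many remaining $n$, yielding tightness.

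By Prokhorov's theorem, $(\mu_n)$ is relatively compact in the weak topology. Let $\mu$ be any weak subsequential limit along $\mu_{n_k} \wto \mu$. Applying the already proved forward direction to this subsequence gives $\mu_{n_k}(\point \setminus \{0\}) \vto \mu(\point \setminus \{0\})$ in $\Mz(\reals^d)$; uniqueness of $\Mz$-limits forces $\mu(\point \setminus \{0\}) = \mu'$. This pins down $\mu$ on $\reals^d \setminus \{0\}$, and since $\mu$ is a probability measure, the atom at the origin is determined as $\mu(\{0\}) = 1 - \mu'(\reals^d \setminus \{0\})$ (a nonnegative quantity by the bound obtained above). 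Hence every weak subsequential limit coincides with the same $\mu$, and the full sequence converges weakly to it, with $\mu' = \mu(\point \setminus \{0\})$ as asserted.

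The main obstacle is the tightness argument: nothing in $\Mz$-convergence directly prevents mass from escaping to infinity, so one really has to exploit that the $\mu_n$ are probability measures together with the $\Mz$-Portmanteau lemma on continuity sets of $\mu'$ of the form $\{\norm{x} > r\}$.
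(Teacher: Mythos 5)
Your proof is correct and takes essentially the same route as the paper: the forward direction by inclusion of test-function classes, and the reverse direction via tightness, Prokhorov's theorem, and uniqueness of subsequential weak limits. The one thing you add is a careful justification of the tightness step (via the $\Mz$-Portmanteau lemma applied to $\mu'$-continuity sets of the form $\{\norm{x}>r\}$ together with continuity from above of $\mu'$), which the paper simply asserts; this is a genuine and worthwhile elaboration of a step the paper leaves implicit.
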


\begin{proof}
	On the one hand, weak convergence implies $\Mz$-convergence, since the former involves a larger class of test functions of which the integrals should converge, namely all continuous, bounded real functions on $\Rd$, which includes $\mathcal{C}_0(\Rd)$; $\mu'= \mu(\point \setminus \{0\})$ follows immediately. 
	
	On the other hand, $\mu_n(\point \setminus \{0\}) \vto \mu' \in \Mz(\reals^d)$ implies tightness of the sequence of probability measures $(\mu_n)_n$, so by Prokhorov's theorem, every subsequence has a further subsequence converging weakly to some $\mu \in \Prob(\reals^d)$. But then $\mu_n(\point \setminus \{0\}) \vto \mu(\point \setminus \{0\})$ along this subsequence, so $\mu(\point \setminus \{0\})= \mu'$. This determines $\mu$ uniquely, as it is a probability measure. Therefore,  $\mu_n \wto \mu$ as  $n \to \infty$ along the full sequence. 
\end{proof}

\subsection{Existence and stability}

%\js{Tekst te herschrijven.} Theorem~\ref{thm:unique} treated the uniqueness of the gradient of a convex function pushing one measure in $\Mz(\Rd)$ forward to another one, but said nothing of the existence of such transformations. In the following theorem, we consider functions $\psi_n$ whose subdifferentials contain the support of a coupling measure $\pi_n$ between finite measures $\mu_n$ and $\nu_n$ approximating possibly infinite measures $\bar{\mu}$ and $\bar{\nu}$ in $\Mz(\Rd)$.  Their existence follows from Theorems~\ref{thm:Rock} and~\ref{thm:existence}. 

Theorem~\ref{thm:unique} treated the uniqueness of the gradient of a convex function pushing one measure in $\Mz(\Rd)$ forward to another one. Here, we will treat both the existence of such transport plans as well as their stability with respect to convergence of the underlying measures in $\Mz(\Rd)$. The proof of the existence in Theorem~\ref{thm:existence:Mz} below is based on an approximation argument involving sequences of finite measures $\mu_n$ and $\nu_n$, for which \citep[Theorem~6]{McCann} already guarantees the existence of coupling measures $\pi_n$ with cyclically monotone support, contained in the graph of the subdifferential of a closed convex function $\psi_n$. To make the argument work, we need to be assured of the stability of the construction with respect to convergence in $\Mz(\Rd)$, and this is guaranteed by Theorem~\ref{thm:basic_stability} below, which also applies to possibly infinite measures $\mu_n$ and $\nu_n$.
Using relative compactness arguments, we show convergence of the coupling measures $\pi_n$ and the subdifferentials $\partial \psi_n$ along subsequences. Different subsequences may have different limits, but under a weak smoothness assumption on $\bar{\mu}$, the limits are essentially unique, yielding convergence along the full sequence. For the subdifferentials, we consider graphical convergence as in Appendix~\ref{sec:gconv}, while for the coupling measures, we consider convergence in $\Mz(\Rd \times \Rd)$. 
Recall that $d_H$ denotes Hausdorff distance, see \eqref{eq:hausdorff}.

%\js{In Stelling~\ref{thm:basic_stability} is de aanname dat $\mu_n$ en $\nu_n$ eindig zijn niet nodig, als we maar aannemen dat er een $\pi_n \in \varPi(\mu_n, \nu_n)$ bestaat waarvoor $\spt \pi_n \subset \gph(\partial \psi_n)$, met $\psi_n$ een gesloten convexe functie. De stabiliteitseigenschap wordt dan sterker: neem bijvoorbeeld een convergerende rij van limietmaten van regulier vari\"{e}rende verdelingen met dezelfde radiale component. Dan convergeren de quantielcontouren ook. Dit zou bijvoorbeeld handig kunnen zijn voor allerlei benaderingen?}
	
\begin{theorem}[Stability]
	\label{thm:basic_stability}
	Let $\mu_n, \nu_n, \bar{\mu}, \bar{\nu} \in \Mz(\Rd)$ be such that $\mu_n(\Rd) = \nu_n(\Rd) \in (0, +\infty]$ and $\bar{\mu}(\Rd) = \bar{\nu}(\Rd) \in (0, +\infty]$. Suppose that
%	Let the Borel measures $\bar{\mu}$ and $\bar{\nu}$ in $\mathcal{M}_{0}(\Rd)$ satisfy $0<\bar{\mu}(\Rd) = \bar{\nu}(\Rd)$. Let $(\mu_{n})_n$ and $(\nu_{n})_n$ be sequences of Borel measures on $\reals^{d}$ satisfying $0 < \mu_{n}(\Rd) = \nu_{n}(\Rd) < +\infty$ for all $n \in \NN$ and suppose that
	\begin{equation}
	\label{eq:marg_conv}
	  \mu_{n} \vto \bar{\mu},
	  \qquad \nu_{n} \vto \bar{\nu},
	  \qquad n \to \infty.
	\end{equation}
	Let $(\psi_{n})_n$ be a sequence of closed convex functions such that for each $n \in \mathbb{N}$, there exists $\pi_{n} \in \varPi(\mu_{n}, \nu_{n})$ such that $\spt(\pi_{n}) \subset \gph(\partial\psi_{n})$.
	\begin{enumerate}[(a)]
		\item
		Every subsequence contains a further subsequence for which there exists $\bar{\pi} \in \varPi(\bar{\mu}, \bar{\nu})$ and a closed convex function $\bpsi$ on $\Rd$ such that $\supp \bar{\pi} \subset \gph(\partial\bpsi)$ and, along the subsequence, $\pi_n \vto \bar{\pi}$ and $\partial \psi_n \gto \partial \bpsi$.
%CdV		For $x \in \dom(\nabla\bpsi)$, we have     
%		$d_H(\partial\psi_{n}(x_{n}), \{ \nabla\psi(x)\})$ $\to 0$ %		$\partial\psi_{n}(x_{n}) \to \{\nabla\bpsi(x)\}$ 
%		whenever $x_{n} \to  x$ along the subsequence.
		
		\item
		If, in addition, $\bar{\mu}$ vanishes on sets of Hausdorff dimension at most $d-1$, then $\pi_n$ converges in $\Mz(\Rd \times \Rd)$ along the full sequence to the unique measure $\bar{\pi} \in \varPi(\bar{\mu}, \bar{\nu})$ with cyclically monotone support. We have $\bar{\pi} = (\id \times \nabla \bpsi)_\# \bar{\mu}$ and $\bar{\nu} = \nabla\bpsi_\# \bar{\mu}$, with $\nabla \bpsi$ equal to any of the subsequence limits in~(a), which are determined uniquely %on $\supp\bar{\mu}$ and therefore, 
		$\bar{\mu}$-almost everywhere\footnote{In the sense that any two functions $\bpsi$ that can appear in (a) have gradients that are equal $\bar{\mu}$-almost everywhere, and are equal on the intersection of their domains with $\supp \bar\mu$ as in Theorem~\ref{thm:unique}.}. 
%		CdV Furthermore, for every $x \in \supp \bar{\mu}$ for which $\partial\bpsi(x)$ is a singleton and every sequence $(x_n)_n$ in $\Rd$ such that $x_n \to x$, we have $d_{H}(\partial\psi_n(x_n), \{\nabla\bpsi(x)\}) \to 0$ as $n \to \infty$. \js{Niet zeker dat dit klopt: zie commentaar in bewijs.}
		\item
		If in addition,  $V := \intr(\supp(\bar{\mu}))$ is nonempty, then the subdifferentials $\partial\bpsi$ of the functions $\bpsi$ in (a) all coincide on $V$ and for every compact $K \subset V$ and every $\eps> 0$, there exists $n_{\eps,K}\in\NN$ such that for all $n\geq n_{\eps,K}$ and all $A\subset K$, 
	\begin{equation}
        \begin{split}
        \label{eq:include}
        \partial\bpsi(A) &\subset \partial\psi_{n}(A+\eps\ball)+\eps\ball,\\
        \partial\psi_{n}(A) &\subset \partial\bpsi(A+\eps\ball)+\eps\ball. 
        \end{split}
        \end{equation}
%        \item[$\circ$]
        The sets $\partial\bpsi(K)$ and $\partial\psi_{n}(K)$ are compact (the latter for sufficiently large $n$),  and 
%        \item[$\circ$]
        \begin{equation}
        \lim_{\eps\downarrow0}\limsup_{n\to\infty}
        d_{H}\left(\partial\psi_{n}(K+\eps\ball),\partial\bpsi(K)\right)=0.
        \label{eq:Hausdorff}
        \end{equation}
%        \end{itemize}
	\end{enumerate}
\end{theorem}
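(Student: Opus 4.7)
I will extract convergent subsequences simultaneously for $(\pi_n)$ and $(\partial\psi_n)$. Since $\pi_n((r\ball)^c\times\reals^d)=\mu_n((r\ball)^c)$ and analogously for the right marginal, the assumption $\mu_n\vto\bar\mu$, $\nu_n\vto\bar\nu$ yields uniform mass bounds for $\pi_n$ on complements of neighbourhoods of the origin in $\reals^d\times\reals^d$. The relative compactness criterion for $\Mz$-convergence of \citet{HultLindskog2006} then produces a subsequence with $\pi_n\vto\bar\pi$, and continuity of the coordinate projections forces $\bar\pi\in\varPi(\bar\mu,\bar\nu)$. Along this subsequence I will invoke the graphical compactness of maximal monotone maps from Appendix~\ref{sec:gconv} to obtain a further subsequence with $\partial\psi_n\gto T$; cyclic monotonicity is a closed condition under Painlev\'e--Kuratowski outer limits, so $T$ is cyclically monotone, and, being both maximal monotone and cyclically monotone, it is maximal cyclically monotone. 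Rockafellar's Theorem~\ref{thm:Rock} then identifies $T=\partial\bar\psi$ for a closed convex $\bar\psi$. Finally, $\spt\bar\pi$ lies in the outer limit of $\spt(\pi_n)\subset\gph(\partial\psi_n)$, which equals $\gph(\partial\bar\psi)$ by graphical convergence.

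\textbf{Part (b).} Under the smoothness hypothesis on $\bar\mu$, Proposition~\ref{prop:rprgmm} applied to any subsequence limit $(\bar\pi,\bar\psi)$ from (a) gives $\bar\pi=(\id\times\nabla\bar\psi)_\#\bar\mu$ and $\nabla\bar\psi_\#\bar\mu=\bar\nu$. A second limit $(\bar\pi',\bar\psi')$ then satisfies $\nabla\bar\psi'_\#\bar\mu=\bar\nu\in\Mz(\reals^d)$, so Theorem~\ref{thm:unique} forces $\nabla\bar\psi=\nabla\bar\psi'$ $\bar\mu$-almost everywhere and on $\spt\bar\mu$; hence $\bar\pi=\bar\pi'$. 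A standard subsequence argument upgrades convergence to the full sequence, and the gradients are $\bar\mu$-a.e. unique as claimed.

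\textbf{Part (c) and main obstacle.} To transfer graphical convergence into the locally uniform statements on $V$, I first locate $V$ inside the interior of $\dom\partial\bar\psi$. From $\spt\bar\pi\subset\gph(\partial\bar\psi)$ and the marginal identity, $\spt\bar\mu\subset\cl(\dom\partial\bar\psi)=\cl(\dom\bar\psi)$; taking interiors and using~\eqref{eq:domains} together with convexity of $\cl(\dom\bar\psi)$ gives $V\subset\intr(\dom\bar\psi)\subset\dom\partial\bar\psi$. On each connected component of $V$, any two limits $\bar\psi,\bar\psi'$ from (a) have gradients agreeing $\bar\mu$-a.e., and since convex functions are locally Lipschitz on the interior of their domains, $\bar\psi$ and $\bar\psi'$ differ by a constant on each component, so $\partial\bar\psi=\partial\bar\psi'$ on $V$. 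The locally uniform version of graphical convergence of maximal monotone maps on the interior of the limit domain (Appendix~\ref{sec:gconv}), applied on compact $K\subset V$, produces the inclusions~\eqref{eq:include}; local boundedness of $\partial\bar\psi$ and of $\partial\psi_n$ (for large $n$) on compact subsets of $V$ gives the compactness of $\partial\bar\psi(K)$ and $\partial\psi_n(K)$ and, combined with~\eqref{eq:include}, the Hausdorff identity~\eqref{eq:Hausdorff}. The two principal hurdles I expect are verifying that the graphical limit $T$ in (a) is simultaneously maximal and cyclically monotone, so that Rockafellar's theorem applies, and establishing $V\subset\intr(\dom\partial\bar\psi)$ in (c), which is what unlocks the Appendix's local uniform convergence machinery.
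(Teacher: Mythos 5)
Your strategy tracks the paper's proof closely, but there are two places where the argument as written does not go through.

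In part (a), before you can invoke the graphical compactness result of Appendix~\ref{sec:gconv} (Theorem~5.36 of Rockafellar--Wets), you must rule out the possibility that $(\partial\psi_n)$ escapes to the horizon; without this the subsequence extraction $\partial\psi_n\gto T$ is simply not available. The paper obtains non-escape \emph{before} extracting the graphical subsequence, via Lemma~\ref{lem:liminfsupp}: $\supp(\pi_n)\vto\bar\pi$ with $\bar\pi\neq 0$ (since $\bar\mu(\Rd)>0$) forces the nonempty set $\supp\bar\pi$ into $\liminf_n\supp\pi_n\subset\liminf_n\gph(\partial\psi_n)$, so the inner limit of the graphs is nonempty, which is exactly the negation of escape to the horizon. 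You only mention the support inclusion at the very end, and with the outer limit rather than the inner limit, to derive $\supp\bar\pi\subset\gph(\partial\bpsi)$ \emph{after} having postulated a graphical limit; this inverts the required logical order and leaves a genuine gap.

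In part (c), two issues. First, your inference that gradients agreeing $\bar\mu$-almost everywhere on $V$ forces $\bpsi$ and $\bpsi'$ to differ by a constant on each component is not valid as stated: $\bar\mu$ need not be absolutely continuous with respect to Lebesgue measure, so $\bar\mu$-a.e.\ agreement alone does not give Lebesgue-a.e.\ agreement. What saves the argument (and is used in the paper via Lemma~\ref{lem:US} plus Corollary~1.5 of \citet{A-A}) is that the disagreement set, being relatively open in the common differentiability domain and $\bar\mu$-null, cannot meet $V\subset\supp\bar\mu$; the set where both gradients are defined and agree is then dense in $V$, and density of agreement is what propagates the equality of the subdifferentials. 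Second, the locally uniform convergence result from the Appendix (Proposition~\ref{prop:M-K}) is stated for a single graphically convergent sequence, whereas in (c) you only have full-sequence graphical convergence relative to $V$, obtained from coincidence of all subsequence limits on $V$. The paper bridges this with an explicit contradiction/subsequence argument (``suppose \eqref{eq:include} fails for some $\eps_0$, $A$, and infinite $N$, extract a graphically convergent subsequence of $N$\,\ldots''); as you present it, you apply the Appendix result as if the full sequence converged graphically, which it need not. Both points are repairable but are not mere bookkeeping.
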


%\js{In Corollarium~\ref{cor:basic_stability}, zelfs als $\mu$ niet nul is op sommige verzamelingen met Hausdorff dimensie $d-1$, dan geldt er nog steeds dat er een koppelmaat $\pi$ bestaat met cyclisch monotone drager. Dit volgt uit een uitbreiding van het bewijs van Corollarium~\ref{cor:basic_stability}, waarbij we $\mu$ en $\nu$ trunceren en zonodig wat massa toevoegen rond de oorsprong zodanig dat de truncaties dezelfde, eindige massa hebben. De redenering is niet circulair: om het versterkte corollarium aan te tonen, gebruiken we het speciale geval van Stelling~\ref{thm:basic_stability} waarbij $\mu_n$ en $\nu_n$ eindig zijn, want het bestaan van $\pi_n$ en $\psi_n$ volgt dan uit \citet{McCann}. Alle stellingen van McCann in Sectie~\ref{sec:prelimin} blijven dus geldig voor $\Mz(\Rd)$ in plaats van $\Prob(\Rd)$.}
\begin{remark}
In fact, under the assumptions in (c), $\partial\psi_{n}$ converges graphically to $\partial\bpsi$ relative to $V$ as $n \to \infty$ as defined on p. 168 of \citet{Rockafellar-Wets}, and 
this in turn implies the locally uniform convergence results for the subdifferentials in (c). For brevity, we prove the latter results more directly from well-known properties of graphical convergence and the coincidence on $V$ of the graphical limits of $\partial\psi_{n}$ over subsequences as in (a).
\end{remark}

\begin{theorem}[Existence]
	\label{thm:existence:Mz}
	Let $\mu, \nu \in \Mz(\Rd)$ have equal, non-zero mass. There exists a coupling measure $\pi \in \varPi(\mu, \nu)$ with cyclically monotone support contained in the graph of the subdifferential of a closed convex function $\psi$ on $\Rd$. If $\mu$ vanishes on all sets of Hausdorff dimension at most $d-1$, then $\psi$ is differentiable $\mu$-almost everywhere, $\nabla\psi_\# \mu = \nu$, and the map $\nabla\psi$ is uniquely determined $\mu$-almost everywhere.
\end{theorem}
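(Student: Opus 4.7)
The plan is to approximate $\mu$ and $\nu$ by finite Borel measures of equal, non-zero mass, invoke \citet[Theorem~6]{McCann} to obtain optimal couplings and convex potentials for each approximating pair, and then pass to the limit via the stability result Theorem~\ref{thm:basic_stability}(a). The representation and uniqueness in the second assertion will follow directly from Proposition~\ref{prop:rprgmm} and Theorem~\ref{thm:unique}.

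First, if $\mu(\Rd) < +\infty$, then \citet[Theorem~6]{McCann} combined with Rockafellar's Theorem~\ref{thm:Rock} yields the desired coupling $\pi$ and closed convex potential $\psi$ immediately. So assume $\mu(\Rd)=\nu(\Rd)=+\infty$. I would construct approximating pairs as follows. Since $r \mapsto \mu(\Rd \setminus r\ball)$ is finite on $(0,\infty)$ and tends to $+\infty$ as $r \downarrow 0$, set $r_n = 1/n$, $c_n = \mu(\Rd \setminus r_n \ball)$, and $\mu_n = \mu|_{\Rd \setminus r_n \ball}$. For $\nu$, let $s_n = \inf\{s > 0 : \nu(\Rd \setminus s\ball) \le c_n\}$ and define $\nu_n$ to be the restriction of $\nu$ to $\Rd \setminus s_n \ball$, augmented by a suitably scaled portion of the mass of $\nu$ on the sphere $\{y : \norm{y} = s_n\}$ so that $\nu_n(\Rd) = c_n$ exactly. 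A direct check with test functions in $\mathcal{C}_0(\Rd)$ shows that $\mu_n \vto \mu$ and $\nu_n \vto \nu$ in $\Mz(\Rd)$, since any such test function vanishes on $r\ball$ for some $r>0$ and eventually $r_n, s_n < r$.

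For each $n$, \citet[Theorem~6]{McCann} together with Theorem~\ref{thm:Rock} produces a coupling $\pi_n \in \varPi(\mu_n,\nu_n)$ with $\spt(\pi_n) \subset \gph(\partial\psi_n)$ for some closed convex $\psi_n$. Applying Theorem~\ref{thm:basic_stability}(a) to $(\mu_n,\nu_n,\pi_n,\psi_n)$ yields, along a subsequence, a limit $\pi \in \varPi(\mu,\nu)$ and a closed convex $\psi$ with $\spt(\pi) \subset \gph(\partial\psi)$, which gives the first claim. For the second claim, assume $\mu$ vanishes on Borel sets of Hausdorff dimension at most $d-1$. Note that $\mu \in \Mz(\Rd)$ is $\sigma$-finite, being the increasing union of the finite measures $\mu|_{\Rd \setminus (1/k)\ball}$, so Proposition~\ref{prop:rprgmm} applies and gives that $\nabla\psi$ is defined $\mu$-almost everywhere, $\pi = (\id \times \nabla\psi)_\#\mu$, and $\nabla\psi_\#\mu = \nu$. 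Theorem~\ref{thm:unique} then delivers the $\mu$-almost everywhere uniqueness of $\nabla\psi$.

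The main obstacle is the construction of the approximating sequences with \emph{exactly} matching finite masses while preserving $\Mz$-convergence; atoms of the radial image of $\nu$ force the partial-scaling trick on the sphere $\{y : \norm{y} = s_n\}$. Once this is in place, the substantive content — stability of the graphs of subdifferentials and uniqueness of $\nabla\psi$ under the Hausdorff-dimension constraint — has already been established in Theorems~\ref{thm:basic_stability} and~\ref{thm:unique}.
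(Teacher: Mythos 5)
Your proof is correct and follows essentially the same strategy as the paper's: truncate to finite measures of equal mass, apply McCann's Theorem~6 together with Rockafellar's theorem to each approximating pair, pass to the limit via Theorem~\ref{thm:basic_stability}(a), and conclude with Proposition~\ref{prop:rprgmm} and Theorem~\ref{thm:unique}. The only (immaterial) difference is the mass-matching device: you truncate $\mu$ first and scale down the mass of $\nu$ on the sphere $\{y : \norm{y} = s_n\}$, whereas the paper truncates $\nu$ first and tops up $\mu$ by adding a multiple of the Lebesgue-uniform distribution on $\eps_n\ball$ --- both variants work because the adjusted mass sits inside arbitrarily small balls around the origin and so is invisible to test functions in $\mathcal{C}_0(\Rd)$.
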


If $\mu$ and $\nu$ in Theorem~\ref{thm:existence:Mz} have infinite mass, then by Lemma~\ref{lem:Psi-finite}, the convex potential $\psi$ attains its minimum at the origin.
%\begin{corollary}[Existence]
%	\label{cor:basic_stability}
%	Let $\mu, \nu \in \Mz(\Rd)$ have infinite mass and suppose that $\mu$ vanishes on all sets of Hausdorff dimension at most $d-1.$%	and on the boundary of its support and that $0 \in \intr(\conv(\spt \mu))$
%Then there exists a closed convex function $\psi$ on $\Rd$ which is differentiable $\mu$-almost everywhere and such that $\nabla\psi_\# \mu = \nu$. Moreover, although $\psi$ is not unique, the map $\nabla\psi$ is uniquely determined $\mu$-almost everywhere.
%\end{corollary}
%\bgroup
%\color{carmine}
%The function $\psi$ in Corollary~\ref{cor:basic_stability} attains its minimum at the origin (Lemma~\ref{lem:Psi-finite}), which, modulo differentiability, must therefore be a stationary point of the gradient. Indeed, the infinite amount of mass that $\mu$ possesses in the neighbourhood of the origin cannot be transported elsewhere, as $\nu$ is finite outside of neighbourhoods of the origin.
%\egroup

\iffalse
% CV: comment moved to proof itself
Recall from Lemma~\ref{lem:weakMz} that for probability measures, $\Mz$-convergence and weak convergence are (nearly) the same. The only task in the proof of the following corollary is to ensure that the restriction of the measures and their couplings to the complement of the origin in the respective spaces does not pose a problem.
\fi
The following corollary extends Theorem~\ref{thm:basic_stability} to the case of weakly converging probability measures. Stability of transport plans between probability measures has been studied before \citep[see for instance][Theorem~2.8]{dBL:2019}, but the uniform convergence statements under~(c) seem to be new.

\begin{corollary}
	\label{cor:basic_stability:weak}
	Theorem~\ref{thm:basic_stability} continues to hold if $\mu_n, \nu_n, \bar{\mu}, \bar{\nu}$ are probability measures on $\Rd$ and if $\Mz$-convergence in \eqref{eq:marg_conv} is replaced by weak convergence.
\end{corollary}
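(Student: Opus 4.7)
My plan is to translate the weak-convergence hypothesis into $\Mz$-convergence via Lemma~\ref{lem:weakMz} and then apply Theorem~\ref{thm:basic_stability}. Setting $\tilde\mu_n := \mu_n(\point \setminus \{0\})$ and analogously $\tilde\nu_n, \tilde{\bar\mu}, \tilde{\bar\nu}$, all elements of $\Mz(\Rd)$, the lemma gives $\tilde\mu_n \vto \tilde{\bar\mu}$ and $\tilde\nu_n \vto \tilde{\bar\nu}$. The couplings $\pi_n \in \varPi(\mu_n, \nu_n)$ with $\spt(\pi_n) \subset \gph(\partial\psi_n)$ are supplied by hypothesis and need no modification; viewed as probability measures on $\Rd \times \Rd$, their weak convergence is equivalent (by Lemma~\ref{lem:weakMz} applied in dimension $2d$) to $\Mz$-convergence of their restrictions to the complement of $(0,0)$.

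The subtlety to address is that Theorem~\ref{thm:basic_stability} requires $\mu_n(\Rd) = \nu_n(\Rd)$, whereas after restriction the masses $\tilde\mu_n(\Rd) = 1 - \mu_n(\{0\})$ and $\tilde\nu_n(\Rd) = 1 - \nu_n(\{0\})$ may differ. However, this equal-mass condition is used only to guarantee that $\varPi(\tilde\mu_n, \tilde\nu_n)$ is nonempty, which is irrelevant here since the couplings $\pi_n$ are supplied. I would inspect the proof of Theorem~\ref{thm:basic_stability} to confirm that its remaining arguments rely only on: (i) the existence of couplings with cyclically monotone support; (ii) $\Mz$-convergence of the marginals; and (iii) standard tightness and compactness arguments.

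Concretely, tightness of the marginal sequences $(\mu_n)$ and $(\nu_n)$ forces tightness of $(\pi_n)$, so by Prokhorov one extracts subsequences along which $\pi_{n_k} \wto \bar\pi \in \varPi(\bar\mu, \bar\nu)$. Closedness of cyclic monotonicity under Painlev\'e--Kuratowski limits, combined with Rockafellar's Theorem~\ref{thm:Rock}, provides a closed convex $\bar\psi$ with $\spt(\bar\pi) \subset \gph(\partial\bar\psi)$, while the graphical convergence $\partial\psi_{n_k} \gto \partial\bar\psi$ follows from the graphical compactness of maximal monotone maps recorded in Appendix~\ref{sec:gconv}. Parts~(b) and~(c) will reduce directly to the analogous parts of Theorem~\ref{thm:basic_stability}, since the hypotheses on $\bar\mu$ (vanishing on sets of Hausdorff dimension at most $d-1$; nonempty interior of support) are unaffected by restriction to $\Rd \setminus \{0\}$. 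The hardest part will be dealing cleanly with the mass-at-origin discrepancy noted above; I would sidestep it by arguing with $(\pi_n)$ directly on $\Rd \times \Rd$ via tightness, rather than forcing $\tilde\pi_n := \pi_n|_{(\Rd \setminus \{0\})^2}$ to be a literal coupling of $\tilde\mu_n$ and $\tilde\nu_n$.
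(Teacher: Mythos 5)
Your plan correctly identifies the real obstruction: after restricting $\mu_n, \nu_n$ to $\Rd \setminus \{0\}$, the two restricted marginals need not have equal mass (they lose $\mu_n(\{0\})$ and $\nu_n(\{0\})$ respectively), and $\pi_n|_{(\Rd\setminus\{0\})^2}$ is then not a coupling of the restrictions. You also correctly observe that the hypotheses on $\bar\mu$ in parts (b)--(c) are unaffected by restriction. Where your plan diverges from the paper is in how to resolve the obstruction. You propose to ``sidestep'' it by not reducing to Theorem~\ref{thm:basic_stability} at all, and instead re-running the tightness/Prokhorov, Painlev\'e--Kuratowski, Rockafellar, and graphical-compactness arguments directly in the weak-convergence setting. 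That route is viable in principle, but it is not a proof of the corollary as stated---it amounts to re-deriving the whole of Theorem~\ref{thm:basic_stability} in parallel rather than invoking it, and you acknowledge that the ``hardest part'' is left unresolved. There is also a further detail you would have to supply: the weak-convergence analogue of Lemma~\ref{lem:liminfsupp} (that $\spt(\bar\pi)$ lies in the inner limit of $\spt(\pi_{n_k})$), since the version proved in the appendix is stated only for $\Mz$-convergence.

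The paper's resolution is a short translation trick that you are missing. Since all of $\mu_n, \nu_n, \bar\mu, \bar\nu$ are ($\sigma$-finite) probability measures, there is some $x_0 \in \Rd$ that is not an atom of any of them. Push all measures forward by $x \mapsto x - x_0$: the translated measures do not charge the origin, so each is \emph{equal} to its restriction to $\Rd \setminus \{0\}$, all restricted masses are $1$, and Lemma~\ref{lem:weakMz} converts the weak convergence hypotheses into the $\Mz$-convergence required by Theorem~\ref{thm:basic_stability}. The translated couplings likewise do not charge the origin of $\Rd \times \Rd$, so $\Mz$-convergence of the couplings upgrades to weak convergence by the same lemma. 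Finally, all conclusions (graphical limits of subdifferentials, coupling measures, uniform inclusions on compacta) are equivariant under the common translation, so the inverse translation $x \mapsto x + x_0$ recovers the statement for the original measures. This one-line trick converts the corollary into a genuine reduction rather than a re-proof, and is worth adding to your toolbox for similar ``probability versus $\Mz$'' bridging arguments.
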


\subsection{Proofs}

\begin{proof}[Proof of Theorem~\ref{thm:basic_stability}]
	(a) The sequences $(\mu_n)_n$ and $(\nu_n)_n$ converge in $\Mz(\Rd)$ and are thus relatively compact. By Lemma~\ref{lem:rlcmpct}, the sequence $(\pi_{n})_{n}$ is then relatively compact in $\Mz(\Rd \times \Rd)$. Every subsequence of $(\pi_{n})_n$ has therefore a further subsequence, with indices in $N \subset \NN$, say, that converges in $\Mz(\reals^{d} \times \reals^{d})$ to some measure $\bar{\pi}$, possibly depending on $N$. 
	
	By Lemma~\ref{lem:liminfsupp}, $\supp(\bar{\pi})$ is a subset of the Painlev\'e--Kuratowski inner limit of $\supp(\pi_{n})$ as $n \to \infty$ in $N$. Since $\supp(\pi_{n})\subset\graph(\partial\psi_{n})$, also
	\begin{equation}
	\label{eq:supp_incl_liminf}
	\supp(\bar{\pi})
	\subset
	\liminf_{n\to\infty, n \in N} \graph(\partial\psi_{n}),
	\end{equation}
	so that $\partial\psi_{n}$ cannot escape to the horizon as $n \to \infty$ in $N$. 
	Therefore, there is a further subsequence with indices in $M \subset N$ such that $\partial\psi_{n}$ converges graphically to some multivalued map $\bar{T}$ with nonempty domain as $n \to \infty$ in $M$ \citep[Theorem~5.36]{Rockafellar-Wets}. 
	By Lemma~\ref{lem:grphT}, $\bar{T}$ must be equal to the subdifferential of some closed convex function $\bpsi$.
	As $\graph(\partial\bpsi)$ is the Painlev\'e--Kuratowski limit of $\graph(\partial\psi_n)$ as $n \to \infty$ in $M$, it is also equal to the inner limit of those sets, which contains the inner limit of $\graph(\partial\psi_{n})$ as $n \to \infty$ in the larger subsequence $N$. 
	By \eqref{eq:supp_incl_liminf}, we find $\supp(\bar{\pi}) \subset \graph(\partial\bpsi)$. 
	
	(b) Let $(\bar{\pi}, \partial\bpsi)$ be a possible subsequence limit in (a). By Lemma~\ref{lem:supp-proj} and by~\eqref{eq:domains}, $\supp \bar{\mu} \subset \cl( \dom \partial \bpsi ) = \cl( \dom \bpsi )$. Since  $\bpsi$ is differentiable everywhere on $\cl( \dom \bpsi )$ except for a set of Hausdorff dimension at most $d-1$ \citep{AndersonKlee}, $\bpsi$ is differentiable and finite $\bar{\mu}$-almost everywhere. Measures in $\Mz(\Rd)$ are obviously $\sigma$-finite. Proposition~\ref{prop:rprgmm} in combination with (a) then implies that $\bar{\pi} = (\id \times \nabla \bpsi)_\# \bar{\mu}$ and $\bar{\nu} = \nabla\bpsi_\# \bar{\mu}$, with $\nabla \bpsi$ defined $\bar{\mu}$-almost everywhere.  By Theorem~\ref{thm:unique}, the map $\nabla \bpsi$ is unique $\bar{\mu}$-almost everywhere. Hence, all subsequence limits $\bar{\pi}$ coincide, implying that $\pi_n$ converges to $\bar{\pi}$. Similarly, any coupling measure of $\bar{\mu}$ and $\bar{\nu}$ with cyclically monotone support is of the form $(\id \times \nabla\bpsi)_\# \bar{\mu}$ for some closed convex function $\bpsi$ (Proposition~\ref{prop:rprgmm}), and since then $\bar{\nu} = \nabla\bpsi_\# \bar{\mu}$, Theorem~\ref{thm:unique} uniquely determines $\nabla\bpsi$ up to $\bar{\mu}$-null sets, thereby fixing the coupling measure too.

%	         CdV
%	         Finally, convergence $d_{H}(\partial\psi_n(x_n), \{ \nabla \bpsi(x) \}) \to 0$ whenever $x_n \to x$ as $n \to \infty$ follows from (a) and uniqueness of $\nabla \bpsi$. \js{In Theorem~\ref{thm:unique} hebben we de gelijkheid van de twee gradiënten in elk punt in de support aangetoond, tenminste voor punten waarvoor beide gradiënten gedefinieerd zijn. Dat is niet helemaal hetzelfde als zeggen dat $\nabla\bar{\psi}$ uniek bepaald is $\spt \bar{\mu}$.} \js{OUD: Maar $\nabla \bpsi$ is slechts uniek $\bar{\mu}$-bijna overal, en niet overal? Waarom zou $x$ niet in de $\bar{\mu}$-null verzameling zitten? Ik denk dat de Hausdorff convergentie enkel geldt voor $x$ in het inwendige van de support van $\bar{\mu}$. Daarom hadden we in de vorige versie de Hausdorff convergentie enkel geformuleerd in deel (c) maar niet in deel (b). Welke extra slimmigheid heb je hier bedacht die ik even niet zie?}

		(c)  For any pair of closed convex functions $\bpsi_a$ and $\bpsi_b$ whose subdifferentials can occur as graphical limits of (possibly different) subsequences in (a), we can find a $\bar{\mu}$-null set $S$ such that $\bpsi_a$ and $\bpsi_b$ are differentiable on $V \setminus S$ and their gradients coincide on that set. As a consequence, also $\partial \bpsi_a(x) = \{ \nabla \bpsi_a(x) \} = \{ \nabla \bpsi_b(x) \} = \partial \bpsi_b(x)$ for all $x \in V \setminus S$. For every open ball $B$ in $V$, the set $B \setminus S$ is dense in $B$ (Lemma~\ref{lem:US}). Therefore, by Corollary~1.5 of \citet{A-A}, $\partial\bpsi_{a}(x) = \partial\bpsi_{b}(x)$ for every $x\in B$ for every open ball $B$ in $V$ and therefore, for every $x\in V$. Therefore, all functions $\bpsi$ occurring in (a) have the same subdifferential $\partial\bpsi(x)$ and thus the same gradient $\nabla\bpsi(x)$ for all $x \in V$. 
%		This implies that $\partial\psi_n$ converges graphically to $\partial\bpsi$ at each $x \in V$ along the full sequence, for any function $\bpsi$ appearing in (a); see Lemma~\ref{lem:gphconv}. Since $V$ is open and nonempty, $\partial\psi_{n} \gto \partial\bpsi$ relative to $V$ along the full sequence. 
%CdV 17 feb 2019 		
	
		The remaining assertions hold along every graphically convergent subsequence $\partial\psi_n$, as they apply to any sequence of maximal monotone maps $T_n$ converging graphically to the maximal monotone map $T$ if $K \subset V$ for some open $V \subset \dom T$; see Proposition~\ref{prop:M-K}. The uniqueness of the limit $\partial\bpsi$ in $V$ then implies  the other claims: take \eqref{eq:include}, for example, and suppose that it does not hold. Then for some $\eps_0>0$, some $A \subset K$ and some infinite $N \subset \NN$, 
$\partial\bpsi(A) \not\subset \partial\psi_{n}(A+\eps_0\ball)+\eps_0\ball$ 
for all $n \in N$, or 
$\partial\psi_{n}(A) \not\subset \partial\bpsi(A+\eps_0\ball)+\eps_0\ball$
for all $n \in N$. Take any infinite subset of $N$ on which $\partial\psi_n \gto \partial\bpsi_A$ 
for some convex $\bpsi_A$ (see (a)); note that $\partial\bpsi_A(x)= \partial\bpsi(x)$ for all $x \in V$.  Then \eqref{eq:include} must hold with $\eps= \eps_0$ for $n$ large enough along this subsequence, which produces a contradiction. The proof of \eqref{eq:Hausdorff} is similar. 
\end{proof}

\begin{proof}[Proof of Theorem~\ref{thm:existence:Mz}]
	If the common value of $\mu(\Rd)$ and $\nu(\Rd)$ is finite, we can normalize $\mu$ and $\nu$ to become probability measures and apply McCann's results in \citep{McCann}. So assume that $\mu(\Rd) = \nu(\Rd) = +\infty$.
	
	For $n \in \NN$, let $\nu_n$ be the restriction of $\nu$ to the complement of $n^{-1} \ball$. Then $a_n := \nu_n(\Rd) = \nu(\Rd \setminus n^{-1}\ball)$ is finite but grows to infinity as $n \to \infty$. Further, $\nu_n \vto \nu$ as $n \to \infty$, since for every $f \in \mathcal{C}_0(\Rd)$, we have $\nu_n(f) = \nu(f)$ for every $n \in \NN$ that is sufficiently large such that $f$ vanishes on $n^{-1}\ball$.
	
	Let $\eps_n = \inf \{ \eps > 0 : \mu(\Rd \setminus \eps\ball) \le a_n \}$, for sufficiently large $n$ such that the set in the definition of the infimum is not empty. Necessarily $\mu(\Rd \setminus \eps_n\ball) \le a_n$ and $\mu(\Rd \setminus \eps_n\ball) \to +\infty$ and thus $\eps_n \to 0$ as $n \to \infty$. Put $m_n = a_n - \mu(\Rd \setminus \eps_n\ball)$, let $\kappa_n$ denote the Lebesgue-uniform distribution on $\eps_n\ball$ and let $\mu_n$ be the sum of $\mu(\point \setminus \eps_n\ball)$ and $m_n \kappa_n$. Then $\mu_n(\Rd) = a_n$ by construction and $\mu_n \vto \mu$ as $n \to \infty$, again because for every $f \in \mathcal{C}_0$ we have $\mu_n(f) = \mu(f)$ for all sufficiently large $n$.
	
	Apply \citet{McCann}, Theorem~6]  to the probability measures $\tilde{\mu}_n = a_n^{-1} \mu_n$ and $\tilde{\nu}_n = a_n^{-1} \nu_n$ to find $\tilde{\pi}_n \in \varPi(\tilde{\mu}_n, \tilde{\nu}_n)$ with cyclically monotone support. Then $\pi_n = a_n \tilde{\pi}_n$ belongs to $\varPi(\mu_n, \nu_n)$ and has the same, cyclically monotone support as $\tilde{\pi}_n$. By Rockafellar's theorem, there exists a closed convex function $\psi_n$ on $\Rd$ such that $\spt \pi_n$ is contained in $\gph \partial \psi_n$. Apply Theorem~\ref{thm:basic_stability}(a) to extract a subsequence along which $\pi_n \vto \pi$ and $\partial \psi_n \gto \partial \psi$ as $n \to \infty$, respectively, for some $\pi \in \varPi(\mu, \nu)$ and some closed convex function $\psi$ on $\Rd$ with the property that $\spt \pi \subset \gph(\partial \psi)$.
	
	If $\mu$ vanishes on sets with Hausdorff dimension not larger than $d-1$, we can apply Theorem~\ref{thm:unique} and Proposition~\ref{prop:rprgmm} to find the representation in terms of $\nabla\psi$ and the uniqueness $\mu$-a.e.\ of the latter. 
\end{proof}

\iffalse
begin{proof}[Proof of Corollary~\ref{cor:basic_stability:weak}]
	Since measures in $\Mz(\Rd)$ are $\sigma$-finite, we can always find a point $x_0 \in \Rd$ which is not an atom of any of the measures $\mu_n, \nu_n, \bar{\mu}$ and $\bar{\nu}$. Apply Theorem~\ref{thm:basic_stability} to the push-forwards of the measures by the translation $x \mapsto x - x_0$ restricted to $\Rd \setminus \{0\}$ and apply Lemma~\ref{lem:weakMz}. Since the translated measures do not charge the origin in $\Rd$, a coupling between their restrictions to $\Rd \setminus \{0\}$ does not charge the origin in $\Rd \times \Rd$ and is therefore a coupling of the translated measures. The translation by $-x_0$ induces a similar translation of the subdifferentials.
\fi	
\begin{proof}[Proof of Corollary~\ref{cor:basic_stability:weak}]
	%Since measures in $\Mz(\Rd)$ are $\sigma$-finite, 
	   Lemma~\ref{lem:weakMz} provides a close connection between weak convergence and $\Mz$--convergence of probability measures. The remaining task is to ensure that the restriction of the measures and their couplings to the complement of the origin does not pose problems.

	We can always find a point $x_0 \in \Rd$ which is not an atom of any of the probability measures $\mu_n, \nu_n, \bar{\mu}$ and $\bar{\nu}$. Therefore, replacing these measures by their push-forwards under the translation $x \mapsto x - x_0$, all coincide with their restrictions to $\Rd \setminus \{0\}$. Thus, by Lemma~\ref{lem:weakMz}, weak convergence is equivalent to $\Mz$--convergence in \eqref{eq:marg_conv} to the same limits, and Theorem~\ref{thm:basic_stability} applies. 
	Since these translated measures do not charge the origin in $\Rd$, couplings between them do not charge the origin in $\Rd \times \Rd$, and by Lemma~\ref{lem:weakMz}, $\Mz$--convergence of a sequence of these couplings implies its weak convergence to the same limit, which is a probability measure. Therefore, all claims of Theorem~\ref{thm:basic_stability} with weak convergence instead of $\Mz$--convergence are satisfied for the translated measures $\mu_n, \nu_n, \bar{\mu}$ and $\bar{\nu}$. 
	Since the validity of these claims is unaffected by a translation of all measures and maps involved, the inverse translation  $x \mapsto x + x_0$  confirms the claims for the original measures. 
\end{proof}

\section{Tails of transport plans between regularly varying distributions}
%\section{Regular variation and tail limits of the transport map}
\label{sec:RV}

\subsection{Regularly varying probability measures}
\label{subsec:RV}

In probability, regular variation underpins asymptotic theory for sample maxima and sums \citep{Bingham, Laurens__boek, Resnick_book} as it provides a coherent framework to describe the behaviour of functions at infinity and thus for the tails of random variables and vectors. A Borel measurable function $f$ defined in a neighbourhood of infinity and taking positive values is regularly varying with index $\tau \in \reals$ if $f(\lambda r)/f(r) \to \lambda^\tau$ as $r \to \infty$ for all $\lambda \in (0, \infty)$. A random variable $R$ is said to have a regularly varying upper tail with index $\alpha \in (0, \infty)$ if the function $r \mapsto \prob(R > r)$ is regularly varying with index $-\alpha$. The function $t \mapsto b(t) = Q(1 - 1/t)$, with $Q$ the quantile function of $R$, is then regularly varying at infinity with index $1/\alpha$, and we have $t \, \prob[R/b(t) > \lambda] \to \lambda^{-\alpha}$ as $t \to \infty$ for all $\lambda \in (0, \infty)$. The latter statement means that, in the space $\Mz(\reals)$, we have $t \, \prob[R/b(t) \in \point] \vto \bar{\nu}$ as $t \to \infty$ where $\bar{\nu}$ is concentrated on $(0, \infty)$ and is determined by $\bar{\nu}((\lambda,\infty)) = \lambda^{-\alpha}$ for all $\lambda \in (0, \infty)$.

More generally, a probability measure $\nu \in \Prob(\Rd)$ is regularly varying if there exists a Borel measurable function $b$ defined in a neighbourhood of infinity and taking positive values such that
\begin{equation}
\label{eq:MRV_cont}
t \, \nu(b(t) \point) \vto \bar{\nu},
\qquad t \to \infty
\end{equation}
for some non-zero $\bar{\nu}\in\Mz(\reals^{d})$. The function $b$ must be regularly varying with index $1/\alpha \in (0, \infty)$, say, and the limit measure $\bar{\nu}$ must be homogeneous:
\begin{equation*}
%\label{eq:nu_homo}
	\bar{\nu}(\lambda^{-1/\alpha} \point ) = \lambda \, \bar{\nu}(\point),
	\qquad \lambda > 0.
\end{equation*}
As a consequence, its support is a (closed) multiplicative cone, that is, $x \in \spt \bar{\nu}$ and $\lambda \in (0, \infty)$ imply $\lambda x \in \spt \bar{\nu}$. %CdV remark added. 
We call $\alpha \in (0, \infty)$ the index of regular variation of $\nu$.

\subsection{Tail limits of the transport map}
\label{sec:RV:tqc}

%Let $\mu \in \Prob(\Rd)$  be regularly varying with auxiliary function $b_1$, index $\alpha_1 > 0$ and limit measure $\bar{\mu} \in \Mz(\Rd)$. Similarly, Let $\nu \in \Prob(\Rd)$  be regularly varying with auxiliary function $b_2$, index $\alpha_2 > 0$ and limit measure $\bar{\nu} \in \Mz(\Rd)$.
Let $\mu \in \Prob(\Rd)$ and $\nu \in \Prob(\Rd)$  be regularly varying. 
By Theorem 6 of \citet{McCann} and Rockafellar's Theorem \ref{thm:Rock}, there exists a coupling measure $\pi\in\varPi(\mu,\nu)$ with cyclically monotone support contained in the graph of the subdifferential $\partial\psi$ of some closed convex function $\psi$ on $\reals^{d}$ (see Section \ref{sec:OT}). The asymptotic behaviour of 
$\partial\psi(x)$ when $\norm{x}$ tends to infinity is described by the following application of Theorem~\ref{thm:basic_stability}. For notational convenience, write $1_d = (1, \ldots, 1) \in \Rd$. Then $\diag(a 1_d, b 1_d)$ is the $(2d) \times (2d)$ diagonal matrix with diagonal $(a, \ldots, a, b, \ldots, b)$, with $d$ repetitions of $a$ and $d$ repetitions of $b$. 
If $A = \diag(a_1, \ldots, a_d)$ is a diagonal matrix and $\lambda > 0$ is a positive scalar, then we write $\lambda^A = \exp(A \log(\lambda)) = \diag(\lambda^{a_1}, \ldots, \lambda^{a_d})$ with $\exp(\point)$ the matrix exponential.
%is determined by the graphical limits $\partial\bpsi$, of $u^{-1} \partial\psi(u\point)$ as $u \to \infty$. This limit turns out to be positively homogeneous and to generate the unique coupling of $\bar{\mu}$ and $\bar{\nu}$ with cyclically monotone support.
%For $t \ge 1$, let 
%\begin{align*}
%	\mu_{t} &:= t \, \mu(b(t)\point),&
%	\nu_{t} &:= t \, \nu(b(t)\point),&
%	\psi_{t} &:= \lvert b(t) \rvert^{-2} \psi(b(t)\point).
%\end{align*}
%For every $t \ge 1$, the function $\psi_{t}$ is convexand $\mu_{t}(\reals^{d}) = \nu_{t}(\reals^{d}) = t$.

\begin{theorem}
\label{thm:RV}
    	Let $\mu$ and $\nu$ in $\Prob(\Rd)$ be regularly varying with auxiliary functions $b_1$ and $b_2$, indices $\alpha_1 > 0$ and $\alpha_2 > 0$, and limit measures $\bar{\mu}$ and $\bar{\nu}$ in $\Mz(\Rd)$. Let $\psi$ be a closed convex function such that the graph of $\partial\psi$ contains the cyclically monotone support of some $\pi\in\varPi(\mu,\nu)$.     	
    	\begin{enumerate}[(a)]
    		\item
    		Every sequence of positive numbers $t_n \to \infty$ contains a subsequence such that, as $n \to \infty$ along the subsequence, writing $B(t) = \diag(b_1(t) 1_d, b_2(t) 1_d)$, we have
		\begin{equation}
		\label{eq:RV:conv}
			\left.
			\begin{split}
			t_n \, \pi(B(t_n) \point) &\vto \bar{\pi} \qquad \text{in $\Mz(\Rd \times \Rd)$}\\
			(b_2(t_n))^{-1} \partial \psi(b_1(t_n) \point) &\gto \partial \bpsi
			\end{split}
			\right\}
		\end{equation}
		for some $\bar{\pi} \in \varPi(\bar{\mu}, \bar{\nu})$ and some closed convex function $\bpsi$ satisfying $\spt \bar{\pi} \subset \gph(\partial \bpsi)$ and $\bpsi(0) < \infty$.
	
		\item
		If the limiting measure $\bar{\mu}$ vanishes on sets of Hausdorff dimension not larger than $d-1$, then
		the coupling $\pi$ satisfies
    		\begin{equation}
    		\label{eq:pit2barpi}
    		t \, \pi(B(t) \point) \vto \bar{\pi}
    		\end{equation}
    		 in $\Mz(\Rd \times \Rd)$ as $n \to \infty$, 
		 where $\bar{\pi} = (\id \times \nabla\bpsi)_\# \bar{\mu}$ is the unique coupling measure 
		 between $\bar{\mu}$ and $\bar{\nu}$ having cyclically monotone support and satisfying
    		 \begin{equation}
		 \label{eq:pibarhomo}
    		 \bar{\pi}(\lambda^{-E}\point) = \lambda \bar{\pi},\qquad \lambda > 0
    		 \end{equation}
		 with $E = \diag(\alpha_{1}^{-1} 1_d, \alpha_{2}^{-1} 1_d)$.
		The gradient $\nabla\bpsi$ is determined %CdV uniquely on the multiplicative cone $\supp\bar{\mu}$ and therefore, 
		uniquely $\bar{\mu}$--almost everywhere and 
%CdV	\[
%		d_H\left( (b_2(t))^{-1}\partial\psi(b_1(t)  x_t), \{\nabla\bpsi(x)\}\right)  \to 0
%		\] 
%		as $x_t \to x$ for every $x \in \supp\bar\mu \cap \dom \nabla\bpsi$. 	
%		It 
		satisfies $(\nabla\bpsi)_\#\bar{\mu} = \bar{\nu}$ and
	        \begin{equation}
		\label{eq:homog:1}
    		\nabla\bpsi(\lambda x) = \lambda^{\alpha_1/\alpha_2} \,\nabla\bpsi(x), \qquad \lambda > 0
    		\end{equation}
		$\bar\mu$-almost everywhere. 
	         Moreover, the function $\bpsi$ 
	         may be modified such that $\bpsi(0)= 0$; then it is determined uniquely $\bar{\mu}$--almost everywhere and satisfies
	         \begin{equation}
                 \label{eq:bpsi:homog}
    		\bpsi(\lambda x) = \lambda^{\alpha_1/\alpha_2+1} \, \bpsi(x),
    		\qquad \lambda \ge 0
    		\end{equation}
		$\bar\mu$-almost everywhere.

		  \item
		  If $V:= \intr(\supp\bar\mu)$ is nonempty, then $\partial\bpsi$ is unique on $V$ and satisfies 
		  $\partial\bpsi(\lambda x) = \lambda^{\alpha_1/\alpha_2} \,\partial\bpsi(x)$ for all $\lambda>0$ and $x \in V$.
		  Furthermore, for every positive sequence $t_n$ tending to infinity and $\psi_n:= (b_2(t_n) b_1(t_n))^{-1} \psi(b_1(t_n)
		  \point)$, the maps  $\partial\psi_n$ converge locally uniformly to $\partial\bpsi$ on $V$ in the sense of Theorem
		  \ref{thm:basic_stability}(c).
		  
    	\end{enumerate}
    \end{theorem}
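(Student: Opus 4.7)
The plan is to apply Theorem~\ref{thm:basic_stability} to a family of rescaled measures and convex potentials, extracting the homogeneity relations in part~(b) from the invariance of the limits under a further time-scaling $t \mapsto \lambda t$. For part~(a), I would set $\mu_n := t_n \, \mu(b_1(t_n) \point)$, $\nu_n := t_n \, \nu(b_2(t_n) \point)$, $\pi_n := t_n \, \pi(B(t_n) \point)$, and $\psi_n(x) := (b_1(t_n) b_2(t_n))^{-1} \psi(b_1(t_n) x)$. A direct subdifferential calculation shows that $(x,y) \in \gph(\partial\psi)$ implies $(x/b_1(t_n), y/b_2(t_n)) \in \gph(\partial\psi_n)$, so that $\pi_n \in \varPi(\mu_n,\nu_n)$ with $\spt\pi_n \subset \gph(\partial\psi_n)$. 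Multivariate regular variation yields $\mu_n \vto \bar\mu$ and $\nu_n \vto \bar\nu$, both necessarily of infinite mass (any nonzero homogeneous measure on $\Rd$ is infinite). Theorem~\ref{thm:basic_stability}(a) then extracts a subsequence along which $\pi_n \vto \bar\pi$ and $\partial\psi_n \gto \partial\bpsi$ with $\spt\bar\pi \subset \gph(\partial\bpsi)$, while Lemma~\ref{lem:Psi-finite} gives $\bpsi(0) < \infty$.

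For part~(b), the vanishing hypothesis on $\bar\mu$ lets Theorem~\ref{thm:basic_stability}(b) upgrade the convergence to hold along the full sequence, with $\bar\pi = (\id \times \nabla\bpsi)_\# \bar\mu$ and $\nabla\bpsi$ unique $\bar\mu$-a.e. The homogeneity~\eqref{eq:pibarhomo} is derived by fixing $\lambda > 0$ and reapplying the convergence along $\lambda t_n$. Writing $D_n := B(t_n)^{-1} B(\lambda t_n)$, regular variation of $b_1,b_2$ gives $D_n \to \lambda^E$, and for any $f \in \mathcal{C}_0(\Rd \times \Rd)$ the identity
\[
\int f \, \diff\bigl(\lambda t_n \, \pi(B(\lambda t_n)\point)\bigr)
= \lambda \int (f \circ D_n^{-1}) \, \diff \pi_n
\]
combined with $\pi_n \vto \bar\pi$ and uniform continuity of $f$ yields $\bar\pi = \lambda \, \bar\pi(\lambda^E \point)$, equivalent to~\eqref{eq:pibarhomo}. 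Substituting $\bar\pi = (\id \times \nabla\bpsi)_\#\bar\mu$ together with $\bar\mu(\lambda^{-1/\alpha_1}\point) = \lambda\bar\mu$ and invoking the uniqueness of $\nabla\bpsi$ (Theorem~\ref{thm:unique}) produces~\eqref{eq:homog:1}. After normalising $\bpsi(0)=0$ (permitted since $\bpsi$ attains a finite minimum there by Lemma~\ref{lem:Psi-finite}), integration of $\nabla\bpsi(tx) = t^{\alpha_1/\alpha_2}\nabla\bpsi(x)$ along $t \in [0,1]$ yields the Euler-type identity $\bpsi(x) = (\alpha_1/\alpha_2+1)^{-1}\inpr{x,\nabla\bpsi(x)}$, which gives~\eqref{eq:bpsi:homog}.

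For part~(c), Theorem~\ref{thm:basic_stability}(c) directly supplies both the uniqueness of $\partial\bpsi$ on $V$ and the locally uniform convergence of $\partial\psi_n$ to $\partial\bpsi$ on compact subsets of $V$. The set $V$ is a multiplicative cone, since $\spt\bar\mu$ is (by homogeneity of $\bar\mu$), and $V \subset \intr(\dom\bpsi)$ by~\eqref{eq:domains} and Theorem~\ref{thm:basic_stability}(b). The identity~\eqref{eq:bpsi:homog} established in (b) holds on a $\bar\mu$-full dense subset of $V$; continuity of $\bpsi$ on $V$ extends it pointwise to all $x \in V$ and, by continuity in $\lambda$, to all $\lambda > 0$. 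Taking subdifferentials on both sides of $\bpsi(\lambda \point) = \lambda^{\alpha_1/\alpha_2+1}\bpsi(\point)$ on $V$ and using the standard chain and scalar-multiplication rules for subdifferentials of convex functions then yields $\partial\bpsi(\lambda x) = \lambda^{\alpha_1/\alpha_2}\partial\bpsi(x)$ for every $x \in V$ and every $\lambda > 0$.

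The main obstacle I anticipate is the derivation of~\eqref{eq:pibarhomo}: the matrix $D_n$ inside the integrand drifts with $n$, so the identity is not a one-line Portmanteau-style consequence of $\pi_n \vto \bar\pi$. A careful equicontinuity argument for test functions in $\mathcal{C}_0(\Rd\times\Rd)$ — exploiting that such functions vanish on a neighbourhood of the origin and are uniformly continuous on bounded sets bounded away from it — is needed to handle the drift of $D_n$. Everything else is standard convex analysis and bookkeeping once this homogeneity identity is in hand.
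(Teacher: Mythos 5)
Your parts (a) and the skeleton of (b), (c) match the paper's strategy: apply Lemma~\ref{lem:scaling} (your ``direct subdifferential calculation''), invoke Theorem~\ref{thm:basic_stability}, use Lemma~\ref{lem:Psi-finite} for $\bpsi(0)<\infty$, and cite operator regular variation for \eqref{eq:pibarhomo}. Your route to \eqref{eq:homog:1} is actually \emph{more} direct than the paper's: noting that $\bpsi_\lambda := \lambda^{-\alpha_1/\alpha_2-1}\bpsi(\lambda\point)$ is convex, that $\nabla\bpsi_\lambda$ pushes $\bar\mu$ forward to $\bar\nu$ by the homogeneities of $\bar\mu$ and $\bar\nu$, and then invoking Theorem~\ref{thm:unique} gives \eqref{eq:homog:1} in one line. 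The paper instead passes through the exact homogeneity of $\supp\bar\pi$, the multivalued map $T$ with $\gph T=\supp\bar\pi$, and the auxiliary convex function $\phi=\cl(\conv(\bpsi|_D))$ built in Lemma~\ref{lem:clconvhull} --- a heavier construction, but one whose payoff appears in the next step.

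The genuine gap is in your derivation of \eqref{eq:bpsi:homog}. The identity \eqref{eq:homog:1} obtained from Theorem~\ref{thm:unique} holds, for each \emph{fixed} $\lambda$, only for $x$ outside a $\bar\mu$-null set $N_\lambda$ that depends on $\lambda$. Writing $\bpsi(x)=\int_0^1 \tfrac{d}{dt}\bpsi(tx)\,\diff t$ and substituting $\nabla\bpsi(tx)=t^{\alpha_1/\alpha_2}\nabla\bpsi(x)$ under the integral sign requires that, for $\bar\mu$-a.e.\ $x$, the substituted identity holds for Lebesgue-a.e.\ $t\in(0,1)$ \emph{and} that $\nabla\bpsi(tx)$ is defined for a.e.\ $t$. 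This can be rescued with a Fubini argument on $\bar\mu\otimes\mathrm{Leb}$ and the standard facts about one-dimensional convex restrictions (the paper's Eq.~\eqref{eq:f}), but as written you silently promote a ``for each $\lambda$, a.e.\ $x$'' statement to a ``for a.e.\ $x$, a.e.\ $\lambda$'' statement. The paper's detour through $T$ avoids this entirely: the homogeneity $T(\lambda x)=\lambda^{\alpha_1/\alpha_2}T(x)$ inherited from $\supp\bar\pi$ holds \emph{pointwise} for all $x$ and $\lambda$, with no exceptional sets, so the ray-integration in \eqref{eq:f}--\eqref{eq:bpsi} proceeds cleanly for every $x\in\dom T$. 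This is the reason for the extra machinery $T$ and $\phi$, not mere bookkeeping. Your part (c) inherits the same gap, since it extends \eqref{eq:bpsi:homog} from a dense $\bar\mu$-full set. (Your final ``take subdifferentials of the homogeneity of $\bpsi$ on $V$'' is fine in substance, but should be phrased via the locality of directional derivatives at interior points of $\dom\bpsi$; for a convex function the subdifferential at an interior point is determined by the function's values in any neighbourhood, which lies in $V$ since $V$ is open, so the argument closes.)
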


\iffalse    
\begin{remark}
          In (c), $(b_2(t_n))^{-1} \partial \psi(b_1(t_n) \point)$ converges graphically to $\partial\bpsi$ relative to $V$ as $n \to \infty$ for every positive sequence $t_n$ tending to infinity 
          (see also Remark ??). 
\end{remark}	
\fi

\begin{remark}
%The coupling measure $\bar\pi$
In Theorem~\ref{thm:RV}(b), the measure $\pi$ is operator regularly varying in the sense of \citep{vMeerschaert}, provided that $\bar\pi$ is not supported on a subspace lower than dimension $2d$.
If $b_1 = b_2$ and thus $\alpha_1 = \alpha_2$, it is regularly varying as in Section~\ref{subsec:RV}.
%\cv{De volgende opmerking is zinnig voor de lezer als we operator RV uitgebreid behandelen. Nu houden we het in de marge; is het niet beter deze te verwijderen? }
%Naturally, the question arises whether Theorem~\ref{thm:RV} can be generalized from regularly varying measures to operator regularly varying measures.  This appears impossible in general: the analog of the map $(b_2(t_n))^{-1} \partial \psi(b_1(t_n) \point)$ in the operator case would be $(B_2(t_n))^{-1} \partial \psi(B_1(t_n) \point)$, with $B_1(t)$ and $B_2(t)$ positive definite matrices. However, this map is not even the gradient of a convex function unless special conditions on $B_1(t)$ and $B_2(t)$ are imposed.
\end{remark}	

\begin{proof}[Proof of Theorem~\ref{thm:RV}]

	(a) Choose a sequence $1 \le t_n \to \infty$ and consider the Borel measures $\mu_n = t_n \, \mu(b_1(t_n)\point)$ and $\nu_n = t_n \, \nu(b_2(t_n)\point)$, both with mass $t_n$. In view of Lemma~\ref{lem:scaling} below, the function $\psi_n = (b_2(t_n) b_1(t_n))^{-1} \psi(b_1(t_n) \point)$ is closed and convex, and the graph of $\partial\psi_n$ contains the support of the measure $\pi_n = t_n \, \pi(B(t_n) \point)$ that couples $\mu_n$ and $\nu_n$.
	By regular variation, $\mu_n \vto \bar{\mu}$ and $\nu_n \vto \bar{\nu}$ in $\Mz(\Rd)$ as $n \to \infty$, with $\bar{\mu}$ and $\bar{\nu}$ not depending on the sequence $t_n$. 
	By Theorem~\ref{thm:basic_stability}, a subsequence of $t_n$ (relabelled $t_n$) exists satisfying $\pi_n \vto \bar{\pi} \in \varPi(\bar{\mu}, \bar{\nu})$ in $\Mz(\Rd \times \Rd)$ and $\partial \psi_n \gto \partial\bpsi$ as $n \to \infty$, with $\bpsi$ a closed convex function satisfying $\supp\bar\pi \subset \gph \partial\bpsi$.
	
	Since $\bar{\mu}$ and $\bar{\nu}$ have infinite mass and since their coupling $\bar{\pi}$ is supported by $\gph(\partial \bpsi)$, Lemma~\ref{lem:Psi-finite} implies that $\bpsi(0)$ is finite.
	
	\smallskip
	
	(b) In view of the assumption on $\bar{\mu}$, Theorem~\ref{thm:basic_stability}(b) implies that there is a unique coupling measure $\bar{\pi} \in \varPi(\bar{\mu}, \bar{\nu})$ with cyclically monotone support. All limit measures in (a) must be equal to this unique measure, and thus we must have \eqref{eq:pit2barpi}. Since $B(t)$ is invertible for every $t>0$ and $B(\lambda t)^{-1} B(t) \to \lambda^{-E}$ as $t \to \infty$, the homogeneity~\eqref{eq:pibarhomo} follows from the theory of operator regular variation \citep[Proposition~6.1.2]{vMeerschaert}, as $\Mz$-convergence is equivalent to vague convergence in compactified and punctured Euclidean space, at least for measures that do not charge the artificially added points at infinity \citep[Proposition~4.4]{lindskog+r+r:2014}. Intuitively, for $\lambda \in (0, \infty)$, we have, on the one hand,
	\[
		\lambda t \, \pi(B(t) \point)
		\vto \lambda \bar{\pi},
		\qquad t \to \infty,
	\]
	and on the other hand,
	\begin{align*}
		\lambda t \, \pi(B(t) \point)
		&= \lambda t \, \pi(B(\lambda t) B(\lambda t)^{-1} B(t) \point) \\
		&\vto \bar{\pi}(\lambda^{-E} \point),
		\qquad t \to \infty.
	\end{align*}
%	\js{In het gewone geval geven \citet[Theorem~3.1(a)]{HultLindskog2006} een direct bewijs van de homogeneïteit van de limietmaat in het scalaire geval.}
	
	The homogeneity of $\bar{\pi}$ in \eqref{eq:pibarhomo} implies
	\begin{equation}
	\label{eq:spt_homog}
		\lambda^E \spt \bar{\pi} = \spt \bar{\pi},
	\end{equation}
	with $\lambda^E = \diag(\lambda^{1/\alpha_1} 1_d, \lambda^{1/\alpha_2} 1_d)$. 
	Let $T:\Rd \toto \Rd$ be the mapping satisfying $\gph T= \supp \bar\pi$. It is unique, and by \eqref{eq:spt_homog}, $y \in T(x)$ if and only if  $\lambda^{1/\alpha_2} y \in T(\lambda^{1/\alpha_1} x)$ by \eqref{eq:spt_homog}, so
	\begin{equation} 
	\label{eq:T_homog}
		T(\lambda x)= \lambda^{\alpha_1/\alpha_2} T(x), \qquad  \lambda>0, x \in \Rd,
	\end{equation}
	an identity in which one and hence both sets can be empty.
	
	By \eqref{eq:T_homog}, the set $\dom T$ is a multiplicative cone. Further, note that $\dom T= \proj_1(\supp \bar\pi)$, so $\bar\mu(\Rd \setminus \dom T)= 0$ and by Lemma~\ref{lem:supp-proj}, $\supp \bar\mu = \cl(\dom T)$.
	 As $\supp \bar\pi \subset \gph \partial\bpsi$, also $\dom T \subset \dom \partial \bpsi$.
	 	
	Replace $\bpsi$ by $\bpsi - \bpsi(0)$ to ensure that $\bpsi(0) = 0$ without changing the subdifferential or the gradient of $\bpsi$.
	Let $x \in \dom T$ and define $f(t) = \bpsi(t x)$ for $t \geq 0$. The convex function $f$ is  finite on $[0, \infty)$, as $ \dom T \subset \dom \partial\bpsi$ and $\dom T$ is a multiplicative cone.
	Its subdifferential  $\partial f(t)$ contains the set $\inpr{\partial \bpsi(tx), x} = \{ \inpr{y, x} : y \in \partial \bpsi(tx) \}$ (just check the inequality in the definition of a subdifferential); therefore, it also contains the set $\inpr{T(tx), x}$.
	% $\inpr{T(tx), x} = \{ \inpr{y, x} : y \in T(tx) \}$
	As a consequence, $f$ has a left derivative $f'_{-} $ and a right derivative $f'_{+}$ satisfying 
	$f'_{-}(t) \leq \inf \inpr{T(tx), x} \leq \sup \inpr{T(tx), x} \leq f'_{+}(t)$ 
	at every $t>0$. 	
	As furthermore
	\begin{equation}
	\label{eq:f}
	f(t)
	= \int_{(0,t)} f'_{-}(s) \, \diff s
	= \int_{(0,t)} f'_{+}(s) \, \diff s, 
	\qquad t \in (0,\infty)
	\end{equation}	
	 (e.g. \cite{Rockafellar}, Corollary 24.2.1), Eq.~\eqref{eq:T_homog} implies 
	\begin{equation}
	\label{eq:bpsi}
	\bpsi(tx)
	= \int_{(0,t)} \sup \inpr{T(sx), x} \, \diff s
	= \sup \inpr{T(x), x} \int_{(0,t)}  s^{\alpha_1/\alpha_2} \, \diff s.
	%CdV	=  \frac {\sup \inpr{T(x), x}}{\alpha_2/\alpha_1+1} \: t^{\alpha_2/\alpha_1+1}.
	\end{equation}
We find that $\bpsi(x)$ is uniquely determined by $T$ for $x \in \dom T$ and thus $\bar\mu$-almost everywhere and also that \eqref{eq:bpsi:homog} holds for such $x$. The set $\intr(\conv(\spt \bar\mu))$ is contained in $\intr(\dom \bpsi)$, since otherwise $\bar\mu$ would assign positive mass to an open half-space on which $\bpsi$ is infinite, in contradiction to the fact that $\bpsi$ is finite $\bar\mu$-almost everywhere. Since $\bpsi$ is continuous on the interior of its domain, we further deduce that $\bpsi(x)$ is uniquely determined by $T$ for $x \in \intr(\conv(\spt \bar\mu))$ and that \eqref{eq:bpsi:homog} holds for such $x$ too.

As in Lemma~\ref{lem:clconvhull}, let the function $\phi$ be the closure of the convex hull of the restriction of $\bpsi$ to $D = \dom T \cup \intr(\conv(\spt \bar\mu))$; its epigraph is $\epi \phi = \cl(\conv(\epi \bpsi \cap (D \times \reals)))$. By Lemma~\ref{lem:clconvhull}(c), $\partial \bpsi(x) \subset \partial \phi(x)$ for all $x \in D$ and therefore, 
\[ 
	\spt \bar\pi 
	\subset \gph (\partial \bar\psi) \cap (D \times \Rd) 
	\subset \gph( \partial \phi).
\]
By Proposition~\ref{prop:rprgmm}, we find that $\nabla \phi$ is defined $\bar{\mu}$-almost everywhere.

Furthermore, by Lemma~\ref{lem:clconvhull}(b), $\phi(x)= \bpsi(x)$ for $x \in D$, so as $D$ is a multiplicative cone and $\bpsi(\lambda x) = \lambda^{\alpha_1/\alpha_2+1} \bpsi(x)$ for all $x \in D$ and $\lambda \ge 0$,
\begin{equation}
\label{eq:phi_homog}
	\phi(\lambda x) = \lambda^{\alpha_1/\alpha_2+1} \phi(x), 
	\qquad \lambda > 0, \, x \in D.
\end{equation}
Take $x \in D$ and $\lambda > 0$. By Lemma~\ref{lem:clconvhull}(b)--(c), we have $v \in \partial\phi(x)$ if and only if $\phi(z) \ge \phi(x) + \inpr{v, z-x}$ for all $z \in D$.
Therefore, as $D$ is a multiplicative cone, $w \in \partial\phi(\lambda x)$ if and only if 
$\phi(\lambda z) \ge \phi(\lambda x) + \inpr{w, \lambda z - \lambda x}$ for all $z \in D$. Using \eqref{eq:phi_homog}, the latter inequality is equivalent to 
$\phi(z) \ge \phi(x) + \inpr{\lambda^{-\alpha_1/\alpha_2} w, z-x}$ for all $z \in D$. Combining (b) and (c) in Lemma~\ref{lem:clconvhull} once more, we find that $w \in \partial \phi(\lambda x)$ if and only if $\lambda^{-\alpha_1/\alpha_2} w \in \partial \phi(x)$ and thus
\[
	\partial\phi(\lambda x) = \lambda^{\alpha_1/\alpha_2} \partial\phi(x),
	\qquad \lambda>0, \, x \in D.
\]

As a consequence, $D \cap \dom\nabla\phi$ is a multiplicative cone and $\nabla\phi(\lambda x) = \lambda^{\alpha_1/\alpha_2} \nabla\phi(x)$ for all $\lambda>0$ and all $x \in D \cap \dom\nabla\phi$. In view of Lemma~\ref{lem:clconvhull}(e), Eq.~\eqref{eq:homog:1} must hold for all $x \in D \cap \dom\nabla\phi$ and therefore $\bar\mu$-almost everywhere. 

%	Let $t>0$. 	
%	By definition, $v \in \partial \bpsi(tx)$ only if $\bpsi(zt)-\bpsi(xt) \geq \inpr{v, (z-x)t}$ for all $z \in \Rd$, and therefore, 
%	only if this inequality holds for all $z \in \supp \bar\mu$. 
%	As the latter set is a cone, this implies by ? that 
%	$\bpsi(z)-\bpsi(x) \geq \inpr{v t^{-\alpha_1/\alpha_2}, z-x}$ for all $z \in \supp \bar\mu$ and therefore,   
%	$t^{-\alpha_1/\alpha_2} v \in \partial_{\supp \bar\mu} \bpsi(x)= \{ \nabla \bpsi(x) \}$. 
%	Therefore, $\partial \bpsi(tx)= t^{\alpha_1/\alpha_2}\{ \nabla \bpsi(x) \}$, and ?? holds for every $x \in \dom \nabla\bpsi \cap \dom T_1$, which is the complement of a $\mu$-null set. 
%	
%	$\dom \nabla \bpsi \cap \supp \bar\mu$ is a cone, and ?? holds. 
% \medskip{}
    \smallskip
	(c) Most statements follow directly from Theorem \ref{thm:basic_stability}(c); the homogeneity of $\partial\bpsi$ follows from the homogeneity of $\partial\phi$, as $\bpsi(x) = \phi(x)$ for all $x \in V$.
\end{proof}

%\cv{Het lijkt er op dat we met $\phi$ i.p.v. $\bpsi$ sterkere resultaten kunnen krijgen: met name zou de uniforme convergentie in Theorem \ref{thm:RV}(c) kunnen gelden in $\supp\bar\mu$ i.p.v. in een compacte subset van het iinwendige daarvan. De operatie waarmee $\phi$ uit $\bpsi$ wordt verkregen zou ook op de rijen $\psi_n$ kunnen toepassen. Is deze operatie een projectie? Dit is misschien van belang voor het  werk aan kwantielcontouren waarin uniformiteit van convergentie belangrijk is. Maar het huidige manuscript is wel al lang genoeg!} \js{Zie Lemma~\ref{lem:clconvhull} voor een aantal eigenschappen van de operatie die $\bpsi$ omzet in $\phi$. Mogelijk zou dat lemma zelfs al nuttig zijn voor Theorem~\ref{thm:unique}. Maar ik ben het met je eens dat we dat hier niet verder hoeven uit te werken.}

\begin{lemma}[Scaling]
	\label{lem:scaling}
	Let $\mu, \nu \in \Prob(\Rd)$ and let $\pi \in \varPi(\mu, \nu)$ have support contained in the graph of $\partial \psi$ for some closed convex function $\psi$. For scalar $b_1 > 0$ and  $b_2 > 0$, consider the $2d \times 2d$ matrix $B=  \diag(b_{1} 1_d,b_{2} 1_d)$ and the measures $\mu_{b_1} = \mu(b_1 \point)$, $\nu_{b_2} = \nu(b_2 \point)$ and $\pi_{b_1,b_2} = \pi(B \point)$. 
	Then $\pi_{b_1,b_2} \in \varPi(\mu_{b_1}, \nu_{b_2})$ and its support $\spt \pi_{b_1,b_2} = B^{-1} \spt \pi$ is contained in the graph of the subdifferential of the closed convex function $\psi_{b_1,b_2} = (b_1 b_2)^{-1} \psi(b_1 \point)$, the subdifferential of which is $\partial \psi_{b_1,b_2}(x) = b_2^{-1} \partial \psi(b_1 x)$ for all $x$, with graph $\gph(\partial \psi_{b_1, b_2}) = B^{-1} \gph(\partial \psi)$.
\end{lemma}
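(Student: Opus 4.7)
The plan is to verify each assertion by direct computation, exploiting that the scaling matrix $B = \diag(b_1 1_d, b_2 1_d)$ is, for $b_1, b_2 > 0$, a homeomorphism of $\Rd \times \Rd$ whose restriction sends $\{x\} \times \Rd$ to $\{b_1 x\} \times \Rd$ and $\Rd \times \{y\}$ to $\Rd \times \{b_2 y\}$. The marginal identities are immediate: for Borel $A \subset \Rd$,
\begin{equation*}
	\pi_{b_1,b_2}(A \times \Rd)
	= \pi(b_1 A \times \Rd)
	= \mu(b_1 A)
	= \mu_{b_1}(A),
\end{equation*}
and symmetrically $\pi_{b_1,b_2}(\Rd \times A) = \nu_{b_2}(A)$, so that $\pi_{b_1,b_2} \in \varPi(\mu_{b_1}, \nu_{b_2})$. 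Since $B$ is a homeomorphism, a point $(x,y)$ lies in $\spt \pi_{b_1,b_2}$ if and only if every neighbourhood of $B(x,y) = (b_1 x, b_2 y)$ has positive $\pi$-mass, which gives $\spt \pi_{b_1,b_2} = B^{-1} \spt \pi$.

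Next, the function $\psi_{b_1,b_2}(x) = (b_1 b_2)^{-1} \psi(b_1 x)$ is convex as a positive multiple of the composition of the convex $\psi$ with the linear map $x \mapsto b_1 x$, and closed because $\psi$ is lower semicontinuous and the precomposition with a homeomorphism preserves lower semicontinuity. To compute its subdifferential, I would simply unwind the definition: $v \in \partial\psi_{b_1,b_2}(x)$ iff, for all $z \in \Rd$,
\begin{equation*}
	(b_1 b_2)^{-1}\psi(b_1 z)
	\ge (b_1 b_2)^{-1}\psi(b_1 x) + \inpr{v,\, z-x},
\end{equation*}
which, after multiplying by $b_1 b_2$ and rewriting $b_1 b_2 \inpr{v,z-x} = \inpr{b_2 v,\, b_1 z - b_1 x}$ and substituting $w = b_1 z$, becomes $\psi(w) \ge \psi(b_1 x) + \inpr{b_2 v,\, w - b_1 x}$ for all $w \in \Rd$. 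This is exactly $b_2 v \in \partial\psi(b_1 x)$, so $\partial\psi_{b_1,b_2}(x) = b_2^{-1} \partial\psi(b_1 x)$.

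Graphically, the identity just established is equivalent to $(x,y) \in \gph(\partial\psi_{b_1,b_2})$ iff $(b_1 x, b_2 y) \in \gph(\partial\psi)$, i.e., $\gph(\partial\psi_{b_1,b_2}) = B^{-1} \gph(\partial\psi)$. Combining this with the support identity gives
\begin{equation*}
	\spt \pi_{b_1,b_2}
	= B^{-1} \spt \pi
	\subset B^{-1} \gph(\partial\psi)
	= \gph(\partial\psi_{b_1,b_2}),
\end{equation*}
completing the proof. There is no real obstacle here; the only point that demands a moment of care is that $B$ is an isomorphism of $\Rd \times \Rd$ (ensured by $b_1, b_2 > 0$), so that preimages of open neighbourhoods remain open neighbourhoods and the characterization of the support via $B$ is valid.
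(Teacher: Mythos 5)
Your proof is correct and follows essentially the same approach as the paper: unwind the definition of the subdifferential, multiply through by $b_1 b_2$, substitute $w = b_1 z$, and identify the result with $b_2 v \in \partial\psi(b_1 x)$. The only difference is that you spell out the marginal identities and the homeomorphism argument for the support, which the paper leaves implicit.
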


\begin{proof}
	The function $\psi_{b_1,b_2}$ is closed and convex since this is true for $\psi$. 
	We have $(x, y) \in \gph \partial \psi_{b_1, b_2}$ if and only if
	\[
		\forall z \in \Rd, \qquad
		(b_1b_2)^{-1} \psi(b_1 z)
		\ge
		(b_1b_2)^{-1} \psi(b_1 x)
		+
		\inpr{z-x, y},
	\]
	which is equivalent to
	\[
		\forall s \in \Rd, \qquad
		\psi(s) \ge \psi(b_1 x) + \inpr{s - b_1x, b_2y}
	\]
	and thus to $(b_1x, b_2y) \in \gph \partial \psi$. We find that $\gph(\partial \psi_{b_1,b_2}) = B^{-1} \gph(\partial \psi)$. 
	From the definition, $\spt \pi_{b_1,b_2} = B^{-1} \spt \pi$. Therefore, the support of $\pi_{b_1,b_2}$ is included in the graph of $\partial \psi_{b_1,b_2}$.
\end{proof}

%\bgroup
%\color{red}
Recall that the convex hull of a function $g : \Rd \to \reals \cup \{\infty\}$ is the function $\conv g$ with epigraph $\epi(\conv g) = \conv(\epi g)$, the convex hull of the epigraph of $g$. It is the greatest convex function majorized by $g$ \citep[p.~36]{Rockafellar}. Recall also that the closure of a function $f$ is the function $\cl f$ whose epigraph is $\epi(\cl f) = \cl(\epi f)$; it is the same as lower semicontinuous hull of that function.

\begin{lemma}
	\label{lem:clconvhull}
	Let $\psi : \reals^d \to \reals \cup \{\infty\}$ be a closed convex function and let $D \subset \Rd$ be such that $D \cap \dom \psi \ne \varnothing$. Define the function $\phi$ by $\phi = \cl(\conv g)$, where $g(x) = \psi(x)$ if $x \in D$ and $g(x) = \infty$ if $x \in \Rd \setminus D$. Then $\phi$ is a closed convex function with the following properties:
	\begin{enumerate}[(a)]
		\item $\phi \ge \psi$;
		\item $\phi(x) = \psi(x)$ for $x \in D$;
		\item $\partial \phi(x) = \{ v \in \Rd : \forall z \in D, \, \psi(z) \ge \psi(x) + \inpr{z-x,v} \} \supset \partial \psi(x)$ for $x \in  D$;
%		\item $\partial \psi(x) \subset \partial \phi(x)$ for $x \in D$;
		\item $\dom \nabla \phi \subset \dom \partial \psi$;
		\item $D \cap \dom \nabla \phi \subset \dom \nabla \psi$ and $\nabla \phi(x) = \nabla \psi(x)$ for all $x \in D \cap \dom \nabla \phi$.
	\end{enumerate}
\end{lemma}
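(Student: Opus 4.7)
The plan is to exploit the characterization of $\phi = \cl(\conv g)$ as the greatest closed convex minorant of $g$. For (a), I observe that $\psi$ is itself a closed convex minorant of $g$: it is closed convex by hypothesis, and $\psi \le g$ holds pointwise since $g = \psi$ on $D$ and $g = +\infty$ off $D$; maximality of $\phi$ among closed convex minorants thus yields $\psi \le \phi$. For (b), at any $x \in D$ the chain
\[
	\psi(x) \le \phi(x) \le \conv g(x) \le g(x) = \psi(x)
\]
collapses to equality.

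For (c), I translate the subgradient inequality $\phi(z) \ge \phi(x) + \inpr{z-x,v}$, using $\phi(x) = \psi(x)$ from (b), into the assertion that the affine function $a(z) := \psi(x) + \inpr{z-x,v}$ is a minorant of $\phi$. The closed affine minorants of $\phi$ coincide with those of $g$: the forward implication uses $\phi \le g$, while the converse uses that an affine $a \le g$ satisfies $a \le \conv g$ by the defining property of the convex hull and then $a = \cl a \le \cl(\conv g) = \phi$. Since $a \le g$ is vacuous outside $D$, the characterization reduces to $\psi(z) \ge \psi(x) + \inpr{z-x,v}$ for all $z \in D$. The inclusion $\partial\psi(x) \subset \partial\phi(x)$ is then immediate, as any subgradient of $\psi$ at $x$ provides an affine minorant of $\psi$ globally, hence a fortiori on $D$.

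For (d), the crucial observation is that $\phi \ge \psi$ forces $\dom\phi \subset \dom\psi$: whenever $\phi(x) < +\infty$, so is $\psi(x)$. Taking interiors preserves this inclusion, and chaining with the standard facts recalled in \eqref{eq:domains} gives
\[
	\dom \nabla\phi \subset \intr(\dom\phi) \subset \intr(\dom\psi) \subset \dom \partial\psi.
\]
For (e), at $x \in D \cap \dom\nabla\phi$ the set $\partial\psi(x)$ is nonempty by (d) and contained in the singleton $\partial\phi(x) = \{\nabla\phi(x)\}$ by (c); it must therefore equal that singleton, forcing $\psi$ to be differentiable at $x$ with $\nabla\psi(x) = \nabla\phi(x)$.

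The argument is essentially an assembly of standard facts about convex hulls, closures and subdifferentials, so I do not anticipate a serious obstacle. The one conceptually delicate point is the identification in (c) of the closed affine minorants of $\phi$ with those of $g$, which is precisely what allows the global subgradient inequality defining $\partial \phi(x)$ to be tested only at points of $D$.
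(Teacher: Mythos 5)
Your argument is correct and follows essentially the same route as the paper's proof; the only cosmetic difference is that in part (c) you phrase the key step in terms of closed affine minorants of $\phi$ and $g$ coinciding, whereas the paper phrases it dually via closed half-spaces in $\reals^{d+1}$ containing $\epi g$ and hence $\epi\phi = \cl(\conv(\epi g))$. Parts (a), (b), (d), (e) match the paper's reasoning step for step.
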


\begin{proof}
%	The epigraph of $g$ is 
%	\[ 
%		\epi g 
%		= \{ (x, \lambda) \in F \times \reals : \psi(x) \le \lambda \} 
%		= \epi \psi \cap (F \times \reals), 
%	\]
%	which is closed, since $\epi \psi$ is closed. Hence, $g$ is lower semicontinuous. The convex hull, $\phi$, of $g$ must then be lower semicontinuous as well, hence closed. \js{Ik heb deze eigenschap niet kunnen terugvinden in \citep{Rockafellar} noch in \citep{Rockafellar-Wets}. Merk op dat de convex hull van een gesloten verzameling niet noodzakelijk gesloten is, dus het bewijs is mogelijk niet helemaal triviaal.}
	Since $\epi \phi = \cl(\epi (\conv g)) = \cl(\conv (\epi g))$, the epigraph of $\phi$ is closed and convex. Hence $\phi$ is a closed and convex function.  
		
	(a) As $\epi g = \epi \psi \cap (D \times \reals)$, we have $\epi \psi \supset \epi g$ and thus, since $\epi \psi$ is convex and closed, $\epi \psi \supset \conv (\epi g)$ and then $\epi \psi \supset \cl(\conv(\epi g)) = \epi \phi$. Hence $\psi \le \phi$.
	
%	Since $\epi \psi$ is a convex set that contains $\epi g$, we have $\epi \phi \subset \epi \psi$ and thus $\phi \ge \psi$.
	
	(b) Since $\epi \phi \supset \epi g$, we must have $\phi(x) \le g(x) = \psi(x)$ for $x \in D$, and thus $\phi(x) = \psi(x)$ for such $x$.
	
%	(c) If $x \in D$ and $y \in \partial \psi(x)$, then, for all $z \in \Rd$, we have, by the previous two properties,
%	\[
%		\phi(z) 
%		\ge \psi(z) 
%		\ge \psi(x) + \inpr{z-x,y} 
%		= \phi(x) + \inpr{z-x,y},
%	\]
%	and thus $y \in \partial \phi(x)$.

	(c) Let $x \in D$ and write $V = \{ v \in \Rd : \forall z \in D, \, \psi(z) \ge \psi(x) + \inpr{z-x,v} \}$.
	By definition of the subdifferential, $V \supset \partial \psi(x)$. Likewise, in view of (b), also $V \supset \partial \phi(x)$. We need to show that $V \subset \partial \phi(x)$. Let $v \in V$. By definition, 
	\[
		\epi g 
		= \epi \psi \cap (D \times \reals) 
		\subset \{ (z, \lambda) \in \Rd \times \reals : \lambda \ge \psi(x) + \inpr{z-x,v} \}.
	\]
	The set on the right-hand side is a closed half-space in $\reals^{d+1}$. Therefore, it also contains 
	$\cl(\conv(\epi g)) = \epi \phi$.
	But that means that $\phi(z) \ge \psi(x) + \inpr{z-x,v}$ for all $z \in \Rd$. As $\psi(x) = \phi(x)$, we find that $v \in \partial \phi(x)$. 
	%As $v \in V$ was arbitrary, we conclude that 
	Therefore, $V \subset \partial \phi(x)$ as required.
	
	(d) From $\psi \le \phi$ we infer $\dom \phi \subset \dom \psi$. \mdf~{Therefore, by \eqref{eq:domains},}
	\[
		\dom \nabla \phi
		\subset \intr (\dom \phi)
		\subset \intr (\dom \psi)
		= \dom \partial \psi.
	\]
	
	(e) Let $x \in D \cap \dom \nabla \phi$ and write $\nabla \phi(x) = y$. On the one hand, by (d), $x \in \dom \partial \psi$ and thus $\partial \psi(x) \ne \varnothing$. On the other hand, by (c), $\partial \psi(x) \subset \partial \phi(x) = \{y\}$. We conclude that $\partial \psi(x) = \{y\}$, so that $x \in \dom \nabla \psi$ and $\nabla \psi(x) = y = \nabla \phi(x)$.
\end{proof}

\appendix

\section{Infinite measures: supports and convergence}
\normalsize

In this section, we collect some results on possibly infinite Borel measures on Euclidean space, with particular attention to $\Mz$-convergence and to properties of the supports of such measures.

\begin{lemma}[Coupling in $\Mz(\Rd)$]
	\label{lem:rlcmpct}
	\begin{enumerate}[(a)]
		\item
		If $\mu, \nu \in \Mz(\Rd)$, then every $\pi \in \varPi(\mu, \nu)$ belongs to $\Mz(\Rd \times \Rd)$.
		
		\item
		If $M$ and $N$ are relative compact sets of measures in $\Mz(\Rd)$, then $\bigcup \{ \varPi(\mu, \nu) : \mu \in M, \nu \in N \}$ is relatively compact in $\Mz(\Rd \times \Rd)$. 
	\end{enumerate}
\end{lemma}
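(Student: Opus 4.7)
The plan is to reduce both statements to a single geometric inclusion that controls tails in $\Rd \times \Rd$ by the tails of the two marginals separately. Since $\norm{(x,y)}^2 = \norm{x}^2 + \norm{y}^2$, if $\norm{(x,y)} > r$ then at least one of $\norm{x}$ or $\norm{y}$ exceeds $r/\sqrt{2}$. Hence for every $r > 0$,
\[
	(\Rd \times \Rd) \setminus r\ball
	\subset
	\bigl( (\Rd \setminus (r/\sqrt{2})\ball) \times \Rd \bigr)
	\cup
	\bigl( \Rd \times (\Rd \setminus (r/\sqrt{2})\ball) \bigr),
\]
where the ball on the left-hand side is the closed unit ball in $\Rd \times \Rd$.

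For part (a), I will first observe that $\pi$ assigns no mass to $\{(0,0)\}$, because $\pi(\{(0,0)\}) \le \pi(\{0\} \times \Rd) = \mu(\{0\}) = 0$ (since $\mu$ is carried by $\Rd \setminus \{0\}$); hence $\pi$ may be viewed as a Borel measure on $(\Rd \times \Rd) \setminus \{(0,0)\}$. Combining the inclusion above with the marginal identities $\pi(A \times \Rd) = \mu(A)$ and $\pi(\Rd \times B) = \nu(B)$ then yields
\[
	\pi\bigl( (\Rd \times \Rd) \setminus r\ball \bigr)
	\le
	\mu\bigl( \Rd \setminus (r/\sqrt{2})\ball \bigr)
	+ \nu\bigl( \Rd \setminus (r/\sqrt{2})\ball \bigr),
\]
which is finite since $\mu, \nu \in \Mz(\Rd)$. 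This gives $\pi \in \Mz(\Rd \times \Rd)$.

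For part (b), I will invoke the standard characterization of relative compactness in $\Mz$ from \citet{HultLindskog2006} (see also \citep{lindskog+r+r:2014}): a family $P \subset \Mz(\Rd \times \Rd)$ is relatively compact if and only if $\sup_{\pi \in P} \pi((\Rd \times \Rd) \setminus r\ball) < \infty$ for every $r > 0$. Taking the supremum in the bound from part (a) over $\mu \in M$, $\nu \in N$, and $\pi \in \varPi(\mu, \nu)$, the right-hand side stays finite by applying the same characterization to the relatively compact families $M$ and $N$. No substantive obstacle is anticipated: the whole argument reduces to the elementary tail-set inclusion above together with the cited relative compactness criterion for $\Mz$.
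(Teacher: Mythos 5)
Your argument for part (a) is correct and matches the paper's: bound the tail of $\pi$ in $\Rd\times\Rd$ by the tails of $\mu$ and $\nu$ via the observation that a point far from the origin in the product has at least one coordinate far from the origin (the paper uses $r/2$ from a general norm argument; your $r/\sqrt 2$ from Pythagoras is a slightly sharper, equally valid constant). Your extra remark that $\pi(\{(0,0)\})\le\mu(\{0\})=0$, so that $\pi$ is genuinely a measure on $(\Rd\times\Rd)\setminus\{(0,0)\}$, is a detail the paper leaves implicit; it is worth spelling out.

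For part (b) there is a small but real gap: the relative-compactness criterion for $\Mz$ that you state is incomplete. Boundedness $\sup_{\pi\in P}\pi\bigl((\Rd\times\Rd)\setminus r\ball\bigr)<\infty$ for every $r>0$ is necessary but not sufficient; one also needs the ``no mass escaping to infinity'' condition
\[
	\lim_{r\to\infty}\sup_{\pi\in P}\pi\bigl((\Rd\times\Rd)\setminus r\ball\bigr)=0,
\]
which is what the paper cites from \citet[Section~4.1]{HultLindskog2006}. To see that boundedness alone does not suffice, consider the family $\{\delta_{n e_1}:n\in\NN\}$ in $\Mz(\Rd)$: each tail mass $\delta_{ne_1}(\Rd\setminus r\ball)$ is at most $1$, yet any subsequential $\Mz$-limit would have to be the zero measure (test against compactly supported $f\in\mathcal{C}_0$), while $\delta_{n_k e_1}\vto 0$ fails (test against an $f\in\mathcal{C}_0$ bounded away from $0$ at infinity); so the family is not relatively compact. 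Fortunately your proof is rescued by the very inequality you already derived in (a): taking suprema there gives
\[
	\sup_{\pi\in P}\pi\bigl((\Rd\times\Rd)\setminus r\ball\bigr)
	\le
	\sup_{\mu\in M}\mu\bigl(\Rd\setminus(r/\sqrt2)\ball\bigr)
	+
	\sup_{\nu\in N}\nu\bigl(\Rd\setminus(r/\sqrt2)\ball\bigr),
\]
and the right-hand side tends to $0$ as $r\to\infty$ by the tightness-at-infinity of the relatively compact families $M$ and $N$. So once the criterion is stated correctly, your argument goes through unchanged and coincides with the paper's.
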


\begin{proof}
	(a) If $(x, y) \in \Rd \times \Rd$ is such that $\norm{(x, y)} \ge r$ for some $r > 0$, then necessarily $\norm{x} \ge r/2$ or $\norm{y} \ge r/2$. For $\mu, \nu \in \Mz(\Rd)$ and $\pi \in \varPi(\mu, \nu)$, we thus have
	\begin{equation}
	\label{eq:piMz}
	\pi \bigl( \{ (x, y) : \norm{(x, y)} \ge r \} \bigr)
	\leq \mu(\{ x : \norm{x} \ge r/2 \}) + \nu(\{ y : \norm{y} \ge r/2 \}),
	\end{equation}
	which is finite by assumption.
	
	(b) A necessary and sufficient criterion for relative compactness of a subset $K$ in $\Mz(\Rd)$ is $\sup_{\mu \in K} \mu(S_r) < \infty$ for all $r>0$ and $\lim_{r \to \infty} \sup_{\mu \in K} \mu(S_{r}) = 0$ \citep[Section~4.1]{HultLindskog2006}. 
	Apply this criterion, twice in $\Mz(\Rd)$ and once in $\Mz(\Rd \times \Rd)$, in combination with~\eqref{eq:piMz}, to conclude the proof.
\end{proof}

\begin{lemma}[From joint to marginal convergence]
	\label{lem:mrgnlMz}
	Let $\mu_n, \nu_n \in \Mz(\Rd)$ and let $\pi_n \in \varPi(\mu_n, \nu_n)$ for all $n$. If $\pi_n \vto \pi$  as $n \to \infty$, then also $\mu_n \vto \mu$ and $\nu_n \vto \nu$ as $n \to \infty$ and $\pi \in \varPi(\mu, \nu)$.
\end{lemma}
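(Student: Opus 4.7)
The plan is to define $\mu$ and $\nu$ as the marginals of the limit $\pi$ directly, check that they belong to $\Mz(\Rd)$, and then deduce $\Mz$-convergence of the marginal sequences by feeding carefully chosen product-type test functions into the assumption $\pi_n \vto \pi$.

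First, since $\pi_n \vto \pi$ in $\Mz(\Rd \times \Rd)$ we have $\pi \in \Mz(\Rd \times \Rd)$. Define $\mu(A) := \pi(A \times \Rd)$ and $\nu(A) := \pi(\Rd \times A)$ for Borel $A \subset \Rd$. For any $r > 0$, the inequality $\norm{(x,y)} \ge \norm{x}$ valid for the Euclidean norm on $\Rd \times \Rd$ gives
\[
	\{x : \norm{x} \ge r\} \times \Rd \subset \{(x,y) : \norm{(x,y)} \ge r\},
\]
so $\mu(\{\norm{x} \ge r\}) \le \pi(\{\norm{(x,y)} \ge r\}) < +\infty$, and analogously for $\nu$. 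Thus $\mu, \nu \in \Mz(\Rd)$ and, by construction, $\pi \in \varPi(\mu, \nu)$.

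Next, let $g \in \mathcal{C}_0(\Rd)$ and choose $r > 0$ with $g \equiv 0$ on $r\ball$. Define $\tilde g : \Rd \times \Rd \to \reals$ by $\tilde g(x,y) := g(x)$. Then $\tilde g$ is continuous and bounded, and since $\norm{(x,y)} \le r$ implies $\norm{x} \le r$ and hence $g(x) = 0$, we have $\tilde g \equiv 0$ on $r \ball \subset \Rd \times \Rd$. Therefore $\tilde g \in \mathcal{C}_0(\Rd \times \Rd)$. Because $\mu_n$ and $\mu$ are the left marginals of $\pi_n$ and $\pi$,
\[
	\int_{\Rd} g \, \diff\mu_n
	= \int_{\Rd \times \Rd} \tilde g \, \diff\pi_n
	\longrightarrow \int_{\Rd \times \Rd} \tilde g \, \diff\pi
	= \int_{\Rd} g \, \diff\mu, \qquad n \to \infty,
\]
by the assumed convergence $\pi_n \vto \pi$. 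Since this holds for every $g \in \mathcal{C}_0(\Rd)$, the criterion recalled just before Lemma~\ref{lem:weakMz} gives $\mu_n \vto \mu$ in $\Mz(\Rd)$. The same argument with $\tilde h(x,y) := h(y)$ for $h \in \mathcal{C}_0(\Rd)$ yields $\nu_n \vto \nu$.

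There is no real obstacle in this proof; the only point requiring care is the verification that the lift $\tilde g$ belongs to $\mathcal{C}_0(\Rd \times \Rd)$, which is a consequence of the product norm dominating each factor norm. Once this is in place, the conclusion follows simply by using these lifts as admissible test functions against $\pi_n \vto \pi$.
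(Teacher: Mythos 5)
Your proof is correct, and it takes a genuinely more elementary route than the paper. The paper's proof is a one-liner: it cites a continuous mapping theorem for $\Mz$-convergence (Theorem~2.5 of Hult and Lindskog, 2005) applied to the two projection maps $(x,y)\mapsto x$ and $(x,y)\mapsto y$, noting that each is continuous and sends the origin of $\Rd\times\Rd$ to the origin of $\Rd$. You instead unpack that theorem by hand in the specific case at issue: you define $\mu,\nu$ as the marginals of the limit $\pi$, verify membership in $\Mz(\Rd)$ via the inequality $\norm{(x,y)}\ge\norm{x}$, and then lift any $g\in\mathcal{C}_0(\Rd)$ to $\tilde g(x,y)=g(x)\in\mathcal{C}_0(\Rd\times\Rd)$ to transfer the convergence of integrals. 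The key observation in both arguments is the same — that the lift of a function vanishing near the origin in $\Rd$ vanishes near the origin in $\Rd\times\Rd$, which is exactly the origin-preservation condition in the continuous mapping theorem — but your version is self-contained and does not require the reader to consult the Hult--Lindskog reference, at the modest cost of a few extra lines. Either approach is equally valid; the paper's is shorter and points to a general tool, while yours makes the mechanism visible.
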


\begin{proof}
	This is a consequence of Theorem~2.5 in \citet{HultLindskog2005} applied to the projection maps $(x, y) \mapsto x$ and $(x, y) \mapsto y$ from $\Rd \times \Rd$ into $\Rd$, each of which is continuous and maps the origin in $\Rd \times \Rd$ to the origin in $\Rd$.
\end{proof}

The inner limit of a sequence of sets $A_n \subset \Rd$ is the set $\liminf_{n \to \infty} A_n$ of points $x \in \Rd$ every neighbourhood of which intersects all but finitely many $A_n$ (Section~\ref{sec:gconv:PK}). Note that this topological inner limit is in general larger than the set-theoretic one, defined as $\bigcup_{n \ge 1} \bigcap_{k \ge n} A_k$.

\begin{lemma}[Support of a limit measure]
	\label{lem:liminfsupp}
	If $\mu_n$ converges to $\mu$ in $\Mz(\Rd)$, then $\supp(\mu) \subset \liminf_{n \to \infty} \supp(\mu_n)$.
\end{lemma}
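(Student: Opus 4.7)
The plan is to show that for every $x \in \supp(\mu)$ and every open neighbourhood $U$ of $x$, the intersection $U \cap \supp(\mu_n)$ is non-empty for all sufficiently large $n$; by the definition of the inner limit this gives $x \in \liminf_n \supp(\mu_n)$.

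First I would exploit that measures in $\Mz(\Rd)$ live on $\Rd \setminus \{0\}$, so $\supp(\mu) \subset \Rd \setminus \{0\}$. Fix $x \in \supp(\mu)$ and an arbitrary open neighbourhood $U$ of $x$; since $\norm{x} > 0$, replace $U$ by the smaller open neighbourhood
\[
U' := U \cap \oball(x, \norm{x}/2),
\]
which is still open, contains $x$, and is bounded away from the origin because every $y \in U'$ satisfies $\norm{y} \ge \norm{x} - \norm{x}/2 = \norm{x}/2 > 0$. In particular $0 \notin \cl(U')$.

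Next, since $x \in \supp(\mu)$ and $U'$ is an open neighbourhood of $x$, we have $\mu(U') > 0$. The convergence $\mu_n \vto \mu$ together with the Portmanteau-type theorem for $\Mz$-convergence \citep[Theorem~2.4]{HultLindskog2006} applied to the open set $U'$ (which is bounded away from $0$) yields
\[
\liminf_{n \to \infty} \mu_n(U') \ge \mu(U') > 0.
\]
Hence $\mu_n(U') > 0$ for all sufficiently large $n$, which forces $U' \cap \supp(\mu_n) \neq \varnothing$, and therefore $U \cap \supp(\mu_n) \neq \varnothing$ as well, for all sufficiently large $n$.

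Since $U$ was an arbitrary open neighbourhood of $x$, this shows $x \in \liminf_{n \to \infty} \supp(\mu_n)$, proving the inclusion. The only minor subtlety is the need to shrink $U$ to $U'$ before invoking the Portmanteau statement, because the latter requires the test open set to avoid a neighbourhood of the origin; this causes no obstacle since $\supp(\mu)$ excludes the origin. No further technicalities are expected.
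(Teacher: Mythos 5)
Your argument follows essentially the same route as the paper's proof: apply the lower Portmanteau bound for open sets to a neighbourhood of $x$, obtain $\liminf_n \mu_n(U) \ge \mu(U) > 0$, deduce that $U$ meets $\supp\mu_n$ eventually, and conclude $x \in \liminf_n \supp(\mu_n)$. Shrinking $U$ so that it is bounded away from the origin, in order to meet the hypothesis of \citep[Theorem~2.4]{HultLindskog2006}, is a sensible precaution that the paper's own proof leaves implicit. However, the way you implement the shrinking contains a genuine flaw: you set $U' = U \cap \oball(x, \norm{x}/2)$, justified by the claim that $\supp(\mu) \subset \Rd \setminus \{0\}$, hence $\norm{x} > 0$. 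That claim is false in the paper's conventions. Measures in $\Mz(\Rd)$ can have infinite mass, and then the origin does belong to $\supp\mu$ viewed as a closed subset of $\Rd$ — the proof of Theorem~\ref{thm:unique} says so explicitly (``$\mu(\Rd) = +\infty$ and thus $0 \in \supp\mu$''), and this lemma is invoked in Theorem~\ref{thm:basic_stability} in exactly that infinite-mass regime, for the coupling measures $\pi_n$ and $\bar\pi$. For $x=0$ your ball $\oball(x, \norm{x}/2)$ degenerates and the step fails.

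The fix is small and works uniformly for all $x \in \supp\mu$ including $x=0$: anchor the shrinking at the origin rather than at $x$. Since $\mu$ assigns no mass to $\{0\}$, continuity from below gives $\mu(U \setminus \eps\ball) \uparrow \mu(U \setminus \{0\}) = \mu(U) > 0$ as $\eps \downarrow 0$, so some $\eps > 0$ satisfies $\mu(U \setminus \eps\ball) > 0$. The set $U \setminus \eps\ball$ is open (because $\eps\ball$ is closed) and bounded away from the origin, so the Portmanteau bound applies to it and yields $\liminf_n \mu_n(U) \ge \liminf_n \mu_n(U \setminus \eps\ball) \ge \mu(U \setminus \eps\ball) > 0$. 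The remainder of your argument then goes through unchanged.
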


\begin{proof}
	Let $U$ be an open neighbourhood of $x \in \supp(\mu)$. Then $\mu(U) > 0$. By the Portmanteau theorem for $\Mz$ \citep[Theorem~2.4]{HultLindskog2006},
	\[
	\liminf_{n \to \infty} \mu_{n}(U) \ge \mu(U) > 0.
	\]
	Hence $U$ has a non-empty intersection with $\supp \mu_{n}$ for all sufficiently large $n$, for otherwise, we could find a subsequence such that $\mu_n(U) = 0$ along the subsequence. Since $U$ was arbitrary, we conclude that $x$ belongs to the inner limit of $\supp(\mu_n)$.
\end{proof}

\begin{lemma}[Support of a Borel measure]
	\label{lem:US}
	Let $\mu$ be a non-zero Borel measure on $\Rd$. If $U \subset \supp(\mu)$ is non-empty and open and if $S \subset \Rd$ is a Borel set with $\mu(S) = 0$, then $U \setminus S$ is dense in $U$.
\end{lemma}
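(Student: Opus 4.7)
The plan is to argue by contradiction. Suppose that $U \setminus S$ is not dense in $U$. Then there exists a point $x \in U$ and an open neighbourhood $V$ of $x$ with $V \subset U$ and $V \cap (U \setminus S) = \varnothing$. Since $V \subset U$, this forces $V \subset S$, hence $\mu(V) \le \mu(S) = 0$.

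On the other hand, pick any $y \in V$. Since $V \subset U \subset \supp(\mu)$, we have $y \in \supp(\mu)$, so by the very definition of the support, every open neighbourhood of $y$ has strictly positive $\mu$-measure. As $V$ is itself such a neighbourhood, $\mu(V) > 0$, contradicting the previous paragraph.

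The argument uses only the definition of the support and the monotonicity of $\mu$; the Borel measurability of $S$ is not even essential beyond ensuring that $\mu(S)$ is well-defined. The only point to double-check is that the hypothesis ``$\mu$ non-zero'' is compatible with $U \subset \supp(\mu)$ being non-empty (which it clearly is, since $\supp(\mu)$ is non-empty as soon as $\mu \ne 0$, so there is no vacuousness to worry about). There is no real obstacle here.
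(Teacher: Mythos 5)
Your proof is correct and essentially the same as the paper's; the paper runs the argument directly (take $x \in U \cap S$, use $\mu(U \cap V) > 0$ and $\mu(S)=0$ to show $(U\setminus S)\cap V \ne \varnothing$), whereas you take the contrapositive, but both hinge on the same two facts — neighbourhoods of support points have positive measure, and subsets of $\mu$-null sets are $\mu$-null.
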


\begin{proof}
	Let $x \in U \cap S$ and let $V$ be a neigbourhood of $x$. We need to show that $V$ intersects $U \setminus S$. Since $x \in \supp(\mu)$ and since $U \cap V$ is a neigbourhood of $x$, we have $\mu(U \cap V) > 0$. Since $\mu(S) = 0$, we find $\mu((U \cap V) \setminus S) > 0$ too. As a consequence, $(U \cap V) \setminus S = (U \setminus S) \cap V$ cannot be empty.
\end{proof}

Let $\proj_1$ be the projection map $(x, y) \mapsto x$ from $\Rd \times \Rd$ into $\Rd$.

\begin{lemma}[Support of a margin]
	\label{lem:supp-proj}
	If $\mu = (\proj_1)_\# \pi$ is the left marginal of the Borel measure $\pi$ on $\Rd \times \Rd$, then $\supp \mu = \cl( \proj_1 (\supp \pi))$. As a consequence, if $\supp \pi \subset \graph( T )$ for some multivalued map $T : \Rd \rightrightarrows \Rd$, then $\supp \mu \subset \cl(\dom T)$.
\end{lemma}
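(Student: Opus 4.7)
The plan is to prove the two inclusions of the set equality $\supp\mu=\cl(\proj_1(\supp\pi))$ separately, using only the defining property of the support as the complement of the largest open set of measure zero (which is well-defined for any Borel measure on a second-countable space, since by the Lindel\"of property that complement is a countable union of open null sets, hence null).

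For the inclusion $\supp\mu\subset\cl(\proj_1(\supp\pi))$, I would argue by contrapositive: if $x\notin\cl(\proj_1(\supp\pi))$, pick an open neighborhood $U$ of $x$ disjoint from $\proj_1(\supp\pi)$. Then $U\times\Rd$ is an open set disjoint from $\supp\pi$, so $\pi(U\times\Rd)=0$; but $\mu(U)=\pi(U\times\Rd)=0$ by definition of the push-forward under $\proj_1$, which shows $x\notin\supp\mu$.

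For the reverse inclusion, since $\supp\mu$ is closed it suffices to show $\proj_1(\supp\pi)\subset\supp\mu$. Let $x\in\proj_1(\supp\pi)$, so $(x,y)\in\supp\pi$ for some $y\in\Rd$. For any open neighborhood $U$ of $x$, the set $U\times\Rd$ is an open neighborhood of $(x,y)$, so $\pi(U\times\Rd)>0$ by the defining property of $\supp\pi$. Hence $\mu(U)>0$, and thus $x\in\supp\mu$.

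The consequence is then immediate: $\proj_1(\gph T)=\dom T$ by definition of the domain of a multivalued map, so $\supp\pi\subset\gph T$ implies $\proj_1(\supp\pi)\subset\dom T$, and taking closures yields $\supp\mu=\cl(\proj_1(\supp\pi))\subset\cl(\dom T)$. There is no real obstacle here; the only point requiring a word of care is that $\pi$ may be infinite and hence the support could in principle be problematic, but second-countability of $\Rd\times\Rd$ (which is shared by any $\pi\in\Mz(\Rd\times\Rd)$ thanks to Lemma~\ref{lem:rlcmpct}(a) when a coupling is considered) ensures that $\supp\pi$ has all the usual properties used above.
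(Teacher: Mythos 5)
Your proof is correct and follows essentially the same route as the paper's: you establish $\proj_1(\supp\pi)\subset\supp\mu$ directly (hence $\cl(\proj_1(\supp\pi))\subset\supp\mu$ since the support is closed), and you establish the reverse inclusion by observing that the open set $\cl(\proj_1(\supp\pi))^c\times\Rd$ is disjoint from $\supp\pi$ and hence $\pi$-null, which is exactly the paper's argument phrased contrapositively. One small remark: the second-countability you invoke is a property of the space $\Rd\times\Rd$ itself, not of the measure $\pi$, so the appeal to Lemma~\ref{lem:rlcmpct}(a) is unnecessary for this point; but the underlying concern (that the complement of the support of a possibly infinite Borel measure should be null) is real and your handling of it is correct.
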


\begin{proof}
	Put $G = \cl( \proj_1 (\supp \pi) )$. To show that $\spt \mu \supset G$, it suffices to show that $\spt \mu \supset \proj_1(\spt \pi)$, as $\spt \mu$ is closed. But the latter inclusion follows easily: if $x \in \proj_1(\spt \pi)$, then there exists $y$ such that $(x, y) \in \spt \pi$. For any open neighbourhood $U$ of $x$, the set $U \times \Rd$ is an open neighbourhood of $(x, y)$, and thus $\mu(U) = \pi(U \times \Rd) > 0$. As a consequence, $x \in \spt \mu$.	
	
	Conversely, to show that $\spt \mu \subset G$, we need to show that $\mu(G^c) = 0$, as $G$ is closed. We have $\mu(G^c) = \pi(G^c \times \Rd)$ and
	\[
	G^c \times \Rd
	\subset (\proj_1 (\supp \pi))^c \times \Rd
	\subset (\supp \pi)^c,
	\]
	which is a $\pi$-null set.
	
	For the second statement, it suffices to notice that
	\begin{equation*}
	\proj_1 (\supp \pi) \subset \proj_1(\graph T) = \dom T. %\qedhere
	\qedhere
	\end{equation*}
\end{proof}

% -----------------------------------------------------
% CdV 17 feb 2019
\section{Graphical convergence of multivalued maps}
\label{sec:gconv}

Subdifferentials of convex functions are multivalued maps. To study the asymptotic properties of a sequence of such maps, we employ the notion of graphical convergence, which is in turn based on the theory of Painlev\'e--Kuratowski set convergence. We review the basic definitions and facts and we state some results that we use in the paper and that we have not been able to find in the literature. Proofs are collected at the end. Our exposition 
%is heavily based 
leans heavily on \citep[Chapters~4, 5 and 12]{Rockafellar-Wets} and \citep[Appendix~B]{Molchanov2005}.

\subsection{Painlev\'e--Kuratowski set convergence}
\label{sec:gconv:PK}

The inner limit of a sequence of sets $C_n \subset \Rd$ is the set $\liminf_{n \to \infty} C_n$ of points $x \in \Rd$ for which there exists a sequence of points $x_n \in C_n$ such that $x_n \to x$ as $n \to \infty$. The outer limit of the sequence $C_n$ is the set $\limsup_{n \to \infty} C_n$ of points $x \in \Rd$ for which there exists an infinite set $N \subset \mathbb{N}$ such that $x_n \in C_n$ for all $n \in N$ and such that $x_n \to x$ as $n \to \infty$ in $N$. Equivalently, the inner limit is the set of all points $x$ every neighbourhood of which intersects all but a finite number of sets $C_n$, while the outer limit is the set of points $x$ every neighbourhood of which intersects an infinite number of sets $C_n$. The inner and outer limits are both closed, and the inner limit is contained in the outer limit.

A sequence of sets $C_n \subset \Rd$ is said to converge to $C \subset \Rd$ (written as $C_n \to C$) if the inner and outer limits agree and are equal to $C$; this is Painlev\'e--Kuratowski set convergence. When restricted to the space of closed subsets of $\Rd$, it is equivalent to convergence in the Fell hit-and-miss topology \citep[Theorem~B.6]{Molchanov2005}. From the definitions above, it follows immediately that if $C_n \to C$ and $\lambda_n \to \lambda$ for some sequence of positive numbers $\lambda_n$ with positive limit $\lambda$, then $\lambda_n C_n \to \lambda C$, generalizing Proposition~D.5 in \citep{Molchanov2005}.
%In the following proposition, we need to exclude multiplication with the scalar zero: if $\lambda_n = n^{-1}$ and $C_n = [0, n^2]$, then $\lambda_n \to \lambda = 0$ and $C_n \to C = [0, \infty)$, but $\lambda_n C_n = [0, n]$ does not converge to $\lambda C = \{0\}$.

\iffalse
\begin{proposition}
	\label{prop:Fell:mult}
	Scalar multiplication $(\reals \setminus \{ 0 \}) \times \Fell \to \Fell : (\lambda, C) \to \lambda C$ is continuous.
\end{proposition}
\fi

\subsection{Graphical convergence}

The graphical inner limit of a sequence of maps $S_n : \reals^k \toto \reals^\ell$ is the map $\gliminf_{n \to \infty} S_n$ whose graph is equal to the inner limit of the sequence of graphs of $S_n$. Similarly, the graphical outer limit of the sequence $S_n$ is the map $\glimsup_{n \to \infty} S_n$ whose graph is equal to the outer limit of the graphs of $S_n$. 
%Symbolically, in $\reals^k \times \reals^\ell$ we have
%\begin{align*}
%	\gph\left(\gliminf_{n \to \infty} S_n\right) &= \liminf_{n \to \infty} \gph(S_n), \\
%	\gph\left(\glimsup_{n \to \infty} S_n\right) &= \limsup_{n \to \infty} \gph(S_n).
%\end{align*}
The sequence $S_n$ converges graphically to $S : \reals^k \toto \reals^\ell$, notation $S_n \gto S$, if the graphical inner and outer limits coincide and are equal to the graph of $S$. In other words, we have $S_n \gto S$ as $n \to \infty$ if the inner and outer limits of $\gph(S_n)$ in $\reals^k \times \reals^\ell$ are equal to $\gph(S)$.

We will apply the concept of graphical convergence to subdifferentials of convex functions. A useful fact is that the graphical limit of a sequence of subdifferentials of closed convex functions is again the subdifferential of a closed convex function.

\begin{lemma}[Graphical limits of subdifferentials]
	\label{lem:grphT}
	Let $\psi_n$ be a sequence of closed convex functions on $\Rd$ and suppose that $\partial \psi_n$ converges graphically to some map $T : \Rd \toto \Rd$ with non-empty domain. Then $T = \partial \psi$ for some closed convex function $\psi$ on $\Rd$.
\end{lemma}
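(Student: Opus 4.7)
The plan is to combine two facts about graphical limits: first, that they preserve cyclic monotonicity, and second, that under the nonempty-domain hypothesis they preserve maximal monotonicity. Together these identify $T$ as the subdifferential of a closed convex function.

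First I would show that $T$ is cyclically monotone. Fix any finite family $(x_1, y_1), \ldots, (x_k, y_k) \in \gph T$. Because $T$ is the graphical limit of $\partial\psi_n$, the graph $\gph T$ equals the Painlev\'e--Kuratowski inner limit of $\gph(\partial\psi_n)$, so for each $i$ there exist points $(x_i^n, y_i^n) \in \gph(\partial\psi_n)$ with $(x_i^n, y_i^n) \to (x_i, y_i)$ as $n \to \infty$. Since $\partial\psi_n$ is cyclically monotone by Rockafellar's Theorem~\ref{thm:Rock}, we have
\[
\sum_{i=1}^k \norm{x_i^n - y_i^n}^2 \leq \sum_{i=1}^k \norm{x_i^n - y_{i+1}^n}^2,
\]
with indices interpreted cyclically. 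Passing to the limit $n \to \infty$ yields the same inequality with $x_i, y_i$ in place of $x_i^n, y_i^n$, so $T$ is cyclically monotone. Rockafellar's Theorem~\ref{thm:Rock} then furnishes a closed convex function $\psi$ such that $\gph T \subset \gph(\partial\psi)$.

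For the reverse inclusion I would invoke maximality. Each $\partial\psi_n$ is maximal monotone, as noted in Section~\ref{sec:prelimin:convex}. The standard stability result for maximal monotone maps on $\Rd$ (see \citep[Theorem~12.32]{Rockafellar-Wets}) asserts that the graphical limit of a sequence of maximal monotone maps is itself maximal monotone, provided its domain is nonempty. Since $\dom T \neq \varnothing$ by hypothesis, $T$ is maximal monotone. Combined with the inclusion $T \subset \partial\psi$ and the monotonicity of $\partial\psi$, maximality of $T$ forces $\gph T = \gph(\partial\psi)$, i.e.\ $T = \partial\psi$.

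The only real subtlety is the appeal to maximality: without it one would only obtain $T \subset \partial\psi$ for some $\psi$, with no guarantee that $T$ exhausts the subdifferential. The assumption that $\dom T$ is nonempty is exactly what rules out the pathological possibility of the graphs $\gph(\partial\psi_n)$ escaping to the horizon, and it is this assumption that unlocks the stability of maximal monotonicity under graphical convergence.
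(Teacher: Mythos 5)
Your proof is correct and follows essentially the same route as the paper's: both establish cyclic monotonicity of $T$ by passing the defining inequality to the limit along graph points (the paper cites the proof of \citep[Theorem~5.20]{Villani} for this), then appeal to \citep[Theorem~12.32]{Rockafellar-Wets} for maximal monotonicity of the graphical limit, and conclude via Rockafellar's theorem. The only cosmetic difference is the order in which the two ingredients are combined at the end — you first get $T \subset \partial\psi$ from cyclic monotonicity and then invoke maximality to upgrade to equality, while the paper first deduces that $T$ is maximal cyclically monotone and then applies Rockafellar's theorem directly — but these are logically equivalent.
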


\begin{proof}[Proof of Lemma~\ref{lem:grphT}]
	Since $\graph(\partial\psi_{n})$ is cyclically monotone, its limit $\graph(T)$ is cyclically monotone too \citep[proof of Theorem~5.20]{Villani}. Furthermore, since $\graph(\partial\psi_{n})$ is maximal monotone \citep[Corollary~31.5.2]{Rockafellar}, its limit is maximal monotone too \citep[Theorem~12.32]{Rockafellar-Wets}. Because cyclic monotonicity implies monotonicity, the graph of $T$ must be maximal cyclically monotone. Rockafellar's Theorem~\ref{thm:Rock} then implies that $T = \partial\psi$ for some closed convex function $\psi$.
	%	\citep[Theorem~12.25]{Rockafellar-Wets}.
\end{proof}

To show the existence of graphically converging subsequences of a sequence of maps, there exists a simple criterion. A sequence of maps $S_n : \reals^k \toto \reals^\ell$ is said to escape to the horizon if for every bounded set $B \subset \reals^k \times \reals^\ell$, there exists an infinite set $N \subset \mathbb{N}$ such that $\gph(S_n)$ does not intersect $B$ for all $n \in N$. In other words, for all bounded sets $C \subset \reals^k$ and $D \subset \reals^\ell$, there exists an infinite set $N \subset \mathbb{N}$ such that $S_n(x) \cap D = \varnothing$ for all $x \in C$ and $n \in N$. Now if $S_n$ does \emph{not} escape to the horizon, then it necessarily contains a subsequence that converges graphically to a map $S : \reals^k \toto \reals^\ell$ with non-empty domain \citep[Theorem~5.36]{Rockafellar-Wets}.

The graph of the graphical limit of a sequence of maps is closed, and maps with closed graphs are outer semicontinuous \citep[Theorem~5.7]{Rockafellar-Wets}, which is a property of the values of the map in the neighbourhood of a given point. Here, we need to extend this to neighbourhoods of a compact set.

\begin{lemma}
	\label{lem:oscK}
	If the graph of $T : \reals^k \rightrightarrows \reals^\ell$ is closed and if the compact set $K \subset \reals^k$ is such that $T(G)$ is bounded for some open set $G \supset K$, then for every $\eps > 0$ there exists $\delta > 0$ such that 
	\[
	T(K + \delta\ball) \subset T(K) + \eps\ball.
	\]
\end{lemma}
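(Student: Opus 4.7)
My plan is to argue by contradiction, exploiting the compactness of $K$, the boundedness of $T(G)$, and the closedness of $\gph(T)$ to extract a convergent pair of sequences in the graph.

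Suppose the conclusion fails for some $\eps > 0$. Then for every $n \in \NN$ there exist $x_n \in K + n^{-1}\ball$ and $y_n \in T(x_n)$ such that $y_n \notin T(K) + \eps\ball$, i.e.\ $\norm{y_n - z} > \eps$ for every $z \in T(K)$. By compactness of $K$, I can pick $x_n' \in K$ with $\norm{x_n - x_n'} \le 1/n$ and, after passing to a subsequence, arrange that $x_n' \to x$ for some $x \in K$; consequently $x_n \to x$ as well.

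Since $G$ is open and contains $K$, for all sufficiently large $n$ we have $x_n \in G$, hence $y_n \in T(x_n) \subset T(G)$. Because $T(G)$ is bounded by hypothesis, a further subsequence yields $y_n \to y$ for some $y \in \reals^{\ell}$. The pairs $(x_n, y_n)$ lie in $\gph(T)$, so by closedness of the graph $(x, y) \in \gph(T)$, meaning $y \in T(x) \subset T(K)$. But then $y_n \to y \in T(K)$ forces $\norm{y_n - y} \le \eps$ for $n$ large enough, contradicting the choice of $y_n$.

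The only subtlety is making sure one has both sequences converging simultaneously: compactness of $K$ handles the $x_n$, and the ambient hypothesis that $T(G)$ is bounded is precisely what handles the $y_n$. Without this boundedness assumption the lemma would fail, since outer semicontinuity of $T$ only gives local control at individual points and does not a priori propagate uniformly over a compact set whose image under $T$ might be unbounded. Once both convergences are secured, closedness of the graph and the definition of $T(K) + \eps\ball$ close the argument immediately.
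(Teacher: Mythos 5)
Your proof is correct and follows essentially the same route as the paper's: an argument by contradiction that extracts a sequence $x_n \in K + \delta_n\ball$ approaching a point $x \in K$ via compactness of $K$, uses boundedness of $T(G)$ to extract a convergent subsequence of the $y_n$, and invokes closedness of $\gph(T)$ to reach the contradiction. The only cosmetic difference is that the paper derives the contradiction by observing $(x,y) \notin \gph(T)$ (since $y \notin T(K)$), while you instead conclude $y \in T(K)$ from closedness and then contradict $y_n \notin T(K) + \eps\ball$ directly; these are the same argument rearranged.
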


\begin{proof}[Proof of Lemma~\ref{lem:oscK}]		
	As $K \subset G$ is compact and $G \subset \reals^k$ open,  we have $K + \delta\ball \subset G$ for sufficiently small $\delta > 0$. 

Suppose the statement does not hold. Then we can find $\eps_0 > 0$, a sequence $\delta_n > 0$ tending to zero and points $x_n \in K + \delta_n\ball$ and $y_n \in T(x_n)$ such that $y_n \not\in T(K) + \eps_0\ball$. For every $n$, there exists $x_n' \in K$ such that $\norm{x_n' - x_n} \le \delta_n$. Passing to a subsequence if necessary we may assume that $x_n' \to x \in K$ and thus also $x_n \to x \in K$. Further, since $y_n \in T(G)$ for all sufficiently large $n$ and since $T(G)$ is bounded, we can, upon passing to a further subsequence, assume that $y_n \to y$. The limit point $y$ cannot belong to $T(K)$ since $y_n \not\in T(K) + \eps_0\ball$ for all $n$. We find that $(x_n, y_n) \in \gph(T)$ converges to a point $(x, y)$ not in $\gph(T)$, in contradiction to the assumption that $\gph(T)$ is closed.
\end{proof}

The following is an elaboration of a property of graphically converging maximal monotone mappings \citep[Exercise~12.40(a)]{Rockafellar-Wets}.

\begin{proposition}[Uniformity in graphical convergence]
	\label{prop:M-K} 
        Let $T_n, T : \Rd \rightrightarrows \Rd$ be maximal monotone maps and suppose that $T_{n}$ converges graphically to $T$. Let $K \subset \intr (\dom T)$ be compact. 
        \begin{enumerate}[(a)]
 %       \item There exists $n_K \in \NN$ and an open set $U \subset V$ such that $K \subset U \subset \intr(\dom T_n)$ for all integer $n \ge n_K$.
        \item For every $\eps> 0$, there exists $n_{\eps,K}\in\NN$ such that for all $n\geq n_{\eps,K}$ and all $A\subset K$,
        \begin{equation}        
        \begin{split}
        \label{eq:inclusion}
        T(A) &\subset T_{n}(A+\eps\ball)+\eps\ball,\\
        T_{n}(A) &\subset T(A+\eps\ball)+\eps\ball.
        \end{split}
        \end{equation}
        \item The sets $T(K)$ and $T_{n}(K)$ are compact, the latter for sufficiently large $n$, and
        \begin{equation}
        \lim_{\eps\downarrow0}\limsup_{n\to\infty}
        d_{H}\left(T_{n}(K+\eps\ball),T(K)\right)=0.
        \label{eq:Hausdorff:T}
        \end{equation}
        \item For every $x \in \intr (\dom T)$ for which $T(x)$ is a singleton and every sequence $(x_n)_n$ in $\Rd$ such that $x_n \to x$, we have $d_{H}(T_n(x_n), T(x)) \to 0$ as $n \to \infty$.
        \end{enumerate}
\end{proposition}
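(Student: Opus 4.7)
The plan is to combine the two halves of graphical convergence, namely $\gph T \subset \liminf_n \gph T_n$ and $\limsup_n \gph T_n \subset \gph T$, with a uniform local boundedness property of the $T_n$ near $K$ and with Lemma~\ref{lem:oscK}. Part (a) will be proved by two contradiction arguments that mirror these two halves; parts (b) and (c) will be easy corollaries of (a).

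The crucial preliminary is the following uniform local boundedness statement: for compact $K \subset \intr(\dom T)$, there exist an open set $U$ with $K \subset U \subset \intr(\dom T)$, an integer $n_0$, and a constant $M > 0$ such that $U \subset \dom T_n$ and $T_n(U) \subset M\ball$ for every $n \geq n_0$. I would prove the bound by contradiction: if $x_n \in U$ and $y_n \in T_n(x_n)$ with $\norm{y_n} \to \infty$ along a subsequence, pass to $x_n \to x \in \cl U$ and $u_n := y_n/\norm{y_n} \to u$ with $\norm{u} = 1$. For any $(x',y') \in \gph T$, graphical convergence supplies $(x_n', y_n') \in \gph T_n$ with $(x_n', y_n') \to (x',y')$; the monotonicity inequality $\inpr{y_n - y_n', x_n - x_n'} \geq 0$, divided by $\norm{y_n}$, passes to the limit to give $\inpr{u, x - x'} \geq 0$ for every $x' \in \dom T$. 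Since $x \in \intr(\dom T)$, this forces $u = 0$, a contradiction. That $U \subset \dom T_n$ eventually is a standard consequence of maximality together with this bound.

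For (a), first inclusion: suppose there exist $\eps_0 > 0$, a subsequence, $A_n \subset K$, $x_n \in A_n$, and $y_n \in T(x_n)$ with $y_n \notin T_n(\oball(x_n, \eps_0)) + \eps_0 \ball$. Local boundedness of $T$ makes $T(K)$ bounded, so I extract $(x_n, y_n) \to (x,y) \in \gph T$ using closedness of $\gph T$. Graphical convergence then produces $(x_n', y_n') \in \gph T_n$ with $(x_n', y_n') \to (x,y)$; for large $n$, $\norm{x_n' - x_n} < \eps_0$ and $\norm{y_n' - y_n} < \eps_0$, placing $y_n$ in $T_n(\oball(x_n, \eps_0)) + \eps_0 \ball$, a contradiction. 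For the second inclusion: suppose $y_n \in T_n(x_n)$ with $x_n \in A_n \subset K$ and $y_n \notin T(\oball(x_n, \eps_0)) + \eps_0\ball$. By the uniform local boundedness above, $y_n$ is bounded; extract $(x_n, y_n) \to (x,y)$, which belongs to $\gph T$ since $\limsup_n \gph T_n \subset \gph T$. For $n$ large, $x \in \oball(x_n, \eps_0)$ and $\norm{y - y_n} < \eps_0$, whence $y_n \in T(x) + \eps_0 \ball \subset T(\oball(x_n, \eps_0)) + \eps_0 \ball$, again a contradiction.

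For (b), $T(K)$ is compact as the second-coordinate projection of the closed bounded set $\gph T \cap (K \times M\ball)$, and likewise for $T_n(K)$ with $n \geq n_0$. The Hausdorff limit combines two applications of (a): applied to $A = K$ it gives $T(K) \subset T_n(K + \eps\ball) + \eps\ball$, and applied to the larger compact set $K + \eps\ball \subset \intr(\dom T)$ (for $\eps$ small) it gives $T_n(K + \eps\ball) \subset T(K + 2\eps\ball) + \eps\ball$ for $n$ large; Lemma~\ref{lem:oscK} then bounds $T(K + 2\eps\ball) \subset T(K) + \eta(\eps)\ball$ with $\eta(\eps) \downarrow 0$, yielding $d_H(T_n(K + \eps\ball), T(K)) \leq \eps + \eta(\eps)$. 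For (c), apply the framework to the compact set $\{x\} \cup \{x_n : n \geq n_0\} \subset \intr(\dom T)$; outer semicontinuity of $T$ at $x$ combined with $T(x) = \{y_0\}$ gives $T(\oball(x_n, \eps)) \subset \ball(y_0, \eta)$ once $x_n$ is close enough to $x$, and (a) with $A = \{x_n\}$ sandwiches $T_n(x_n)$ inside $\ball(y_0, \eta + \eps)$. Since uniform local boundedness also ensures $x_n \in \dom T_n$ for $n$ large, $T_n(x_n) \neq \varnothing$, and both sides of $d_H(T_n(x_n), \{y_0\})$ are controlled. The main obstacle is the uniform local boundedness step; without it, one cannot extract a convergent subsequence of $y_n \in T_n(x_n)$ in the second inclusion of (a), and everything downstream breaks.
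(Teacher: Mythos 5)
Your proposal is correct and follows the same high-level structure as the paper's proof (establish uniform local boundedness near $K$, deduce the $\eps$-inclusions in (a) from the two halves of graphical convergence, then feed (a) and Lemma~\ref{lem:oscK} into (b), and specialize to a singleton for (c)). The difference is that where the paper reaches for the literature --- Theorem~2.2 of Pennanen, Rockafellar and Th\'era for the uniform local boundedness of the $T_n$, and Exercise~5.34(a) of Rockafellar--Wets for the two inclusions in (a) --- you prove these facts directly. Your monotonicity argument for the boundedness (dividing $\inpr{y_n - y_n', x_n - x_n'} \geq 0$ by $\norm{y_n}$ and extracting a unit normal vector $u$ contradicting $x \in \intr(\dom T)$) is a clean, self-contained alternative to the cited theorem, provided you take care to shrink $U$ so that $\cl U \subset \intr(\dom T)$ before invoking compactness of $\cl U$. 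The compactness-and-contradiction arguments you give for the inclusions in (a) are in substance the proof that underlies the cited exercise, so nothing is lost. The one step you leave as ``a standard consequence'' --- that $U \subset \dom T_n$ eventually --- genuinely does need its own argument (e.g.\ the simplex construction using near-convexity of $\dom T_n$), and the cited Pennanen--Rockafellar--Th\'era theorem delivers it for free, which is one good reason the paper cites it rather than reproving. Your treatment of (c) is a slightly expanded version of the paper's one-liner ``$K = \{x\}$,'' making explicit the role of the singleton assumption and of $T_n(x_n) \neq \varnothing$; both are needed and both are correctly handled. In short: same route, with some literature citations replaced by worked-out arguments, at the cost of a small gap in the domain-inclusion step.
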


\begin{proof}[Proof of Proposition~\ref{prop:M-K}]
Let $x \in \intr (\dom T)$. By \citep[Theorem~2.2]{pennanen+r+t:2002}, there exists a neighbourhood $V$ of $x$ and a bounded set $B \subset \Rd$ such that $T(V) \subset B$ and $T_n(V) \subset B$ for all sufficiently large $n$. This property extends straightforwardly to a compact $K \subset \intr (\dom T)$: there exists an open set $V \supset K$ and a bounded set $B \subset \Rd$ such that $T(V) \subset B$ and $T_n(V) \subset B$ for all sufficiently large $n$. 
As the maps $T$ and $T_n$ are maximally monotone, they are closed \citep[Exercise~12.8]{Rockafellar-Wets}, hence $T(K)$ and $T_n(K)$ are compact. Furthermore, \citep[Exercise~5.34(a)]{Rockafellar-Wets} implies \eqref{eq:inclusion}.
%CdV	\js{Dit is enkel waar indien $K$ in het inwendige van $\dom T_n$ en $\dom T$ zit. De bewering dat $T(K)$ compact is indien $K$ compact is en $T$ maximaal monotoon is, is waar op voorwaarde dat $K$ in het inwendige van het domein van $T$ zit, maar niet in het algemeen. De bewering dat $T_n(K)$ compact is voor voldoende grote $n$ volgt dan uit het feit dat $T_n(K)$ gesloten is (voor alle $n$) en begrensd (voor voldoende grote $n$) vanwege het tweede deel van \eqref{eq:inclusion} met $A = K$, voor $\eps$ voldoende klein zodanig dat $K + \eps\ball$ nog steeds in het inwendige van $\dom T$ zit.} 
	
	Let $\eta > 0$. By Lemma~\ref{lem:oscK}, there exists $\eps\in(0,\eta/4)$ satisfying 
	$T(K + 2\eps\ball) \subset T(K) + (\eta/2)\ball$.
	Applying (a) to the set $K + \eps\ball$, we obtain that for all sufficiently large $n$,
	\begin{align*}
	T(K)
	&\subset T_n(K + \eps\ball) + \eps\ball \\
	&\subset T(K + 2\eps\ball) + 2\eps\ball \\
	&\subset T(K) + \eta\ball,
	\end{align*}
	so the Hausdorff distance between $T_n(K + \eps\ball)$ and $T(K)$ must be bounded by $\eta$. This implies \eqref{eq:Hausdorff:T}.	
	In the special case $K = \{x\} \subset \intr(\dom T)$, this implies (c).
\end{proof}

\small

\bibliographystyle{chicago}
\bibliography{biblio}

\end{document}